\newcommand{\Dchaintwo}[4]{
\rule[-3\unitlength]{0pt}{8\unitlength}
\begin{picture}(14,5)(0,3)
\put(1,2){\ifthenelse{\equal{#1}{l}}{\circle*{2}}{\circle{2}}}
\put(2,2){\line(1,0){10}}
\put(13,2){\ifthenelse{\equal{#1}{r}}{\circle*{2}}{\circle{2}}}
\put(1,5){\makebox[0pt]{\scriptsize #2}}
\put(7,4){\makebox[0pt]{\scriptsize #3}}
\put(13,5){\makebox[0pt]{\scriptsize #4}}
\end{picture}}
\newcommand\ord{{\operatorname{ord}}}
\newcommand\Z{\mathbb{Z}}
\newcommand\Ly{\mathcal{L}}
\newcommand\SupW{[\Ly]^{(\N)}}
\newcommand\SupWL{[L]^{(\N)}}
\newcommand\N{\mathbb{N}}
\renewcommand{\k}{\Bbbk}
\newcommand\G{\Gamma}
\newcommand\Gh{\widehat{\Gamma}}
\renewcommand\o{\otimes}       
\newcommand\sw[1]{{}_{(#1)}}   
\newcommand\fv[1]{\vartheta_{{#1}}}      
\newcommand\fvsl[1]{x_{ {#1}}} 
\newcommand\fvsls[1]{X_{#1}} 
\newcommand\fvslbr[2]{[\fvsl{#1},\fvsl{#2}]}                  
\newcommand{\la}{\langle}      \newcommand{\ra}{\rangle}
\newcommand{\red}[1]{c_{ {#1}}}     
\newcommand{\redv}[1]{c_{{#1}}^{\rho}}     
\newcommand{\redbr}[2]{c_{{(#1|#2)}}}
\newcommand{\redbrv}[2]{c_{{(#1|#2)}}^{\rho}}
\newcommand{\redh}[1]{d_{{#1}}} 
\newcommand{\redhv}[1]{d_{{#1}}^{\rho}} 
\newcommand\kX{\k\la X\ra}
\newcommand{\kspan}{\operatorname{span}_\k}
\newcommand\Lv{\la \fvsls{L} \ra}
\newcommand\kLv{\k\la \fvsls{L} \ra}
\newcommand\X{\la X\ra}
\newcommand\Sh[3]{\mathrm{Sh}(#1)=(#2|#3)}
\newcommand\nSh[3]{\operatorname{Sh}(#1)\neq(#2|#3)}
\newcommand\idl[1]{I_{ \scriptscriptstyle{\! #1}}} 
\newcommand{\orddl}{\prec_{\diamond}}
\newcommand{\orddleq}{\preceq_{\diamond}}
\newcommand{\idlDL}[1]{I_{\orddl #1}}
\numberwithin{equation}{section}
\theoremstyle{plain}
\newtheorem{thm}{Theorem}[section]
\newtheorem{cor}[thm]{Corollary}
\newtheorem{lem}[thm]{Lemma}
\newtheorem{prop}[thm]{Proposition}
\theoremstyle{definition} 
\newtheorem{defn}[thm]{Definition}
\newtheorem{rem}[thm]{Remark}
\newtheorem{exs}[thm]{Examples}
\newtheorem{ex}[thm]{Example}
\newtheorem{notation}[thm]{Notation}
\title{A PBW basis criterion for 
pointed Hopf algebras\footnote{This work is part of the author's PhD thesis \cite{Helbig-PhD} written under the supervision of Prof.~H.-J.~Schneider.}}
\author{Michael Helbig\footnote{eMail: \texttt{michael@helbig123.de}}}
\date{\today}
\begin{document}

\maketitle


\begin{abstract}
\noindent
We give a necessary and sufficient PBW basis criterion for Hopf algebras generated by skew-primitive elements and abelian group of group-like elements with action given via characters. This class of pointed Hopf algebras has shown great importance in the classification theory and can be seen as generalized quantum groups.

\noindent
We apply the criterion to classical examples and liftings of Nichols algebras which were determined in \cite{Helbig-Lift}.
\newline

\noindent
Key Words: Hopf algebras, Nichols algebras, lifting, PBW basis, Gr\"obner basis

\end{abstract}



\section*{Introduction}

In the famous Poincar\'{e}-Birkhoff-Witt theorem for universal enveloping algebras of finite-dimensional Lie algebras a class of new bases appeared. Since then many PBW theorems for more general situations were discovered. We want to name those for quantum groups: Lusztig's axiomatic approach \cite{LusztPBW,Luszt3} and 
Ringel's approach via Hall algebras \cite{RingelPBW}. Let us also mention the work of Berger \cite{BergerPBW}, Rosso \cite{RossoPBW}, and Yamane \cite{YamanePBW}.

Our starting point of view is the following: Part of the classification program of finite-dimensional pointed Hopf algebras with the lifting method of Andruskiewitsch and Schneider \cite{AS-p3} is the knowledge of the dimension resp.~a basis of the deformations of a Nichols algebra (the so-called \emph{liftings}). Another aspect is to find the redundant relations in the ideal. These liftings are among the class we consider here. We want to present a necessary and sufficient PBW basis criterion for Hopf algebras generated by skew-primitive elements and abelian group of group-like elements with action given via characters. This class contains all quantum groups, Nichols algebras and their liftings and it is conjectured that any finite-dimensional pointed Hopf algebra over the complex numbers is of that form.

The very general and for us important work is \cite{KhPBW}, where a PBW theorem for the here considered class of Hopf algebras  is formulated: Kharchenko shows in \cite[Thm.~2]{KhPBW} these Hopf algebras 
have a PBW basis in special $q$-commutators, namely the \emph{hard super letters} coming from the theory of Lyndon words, see Section \ref{SectCharHA}. However, the definition of \emph{hard} is not constructive (see also \cite{BokutUnsolv,BokutBook} for the word problem for Lie algebras) and in view of treating concrete examples there is a lack of deciding whether a given set of iterated $q$-commutators establishes a PBW basis.

On the other hand the diamond lemma \cite{BergmanDL} (see also Section \ref{SectDL}, Theorem \ref{ThmDL})  is a very general method to check whether an associative algebra given in terms of generators and relations has a certain basis, or equivalently the relations form a Gr\"{o}bner basis. As mentioned before, we construct such a Gr\"{o}bner basis for a character Hopf algebra in Theorem \ref{PropIdealCharHopfAlg} and give a necessary and sufficient criterion for a set of super letters being a PBW basis, see Theorem \ref{ThmPBWCrit}. The PBW Criterion \ref{ThmPBWCrit} is formulated in the languague of $q$-commutators. This seems to be the natural setting, since the criterion involves only $q$-commutator identities of Proposition \ref{PropqCommut}; as a side effect we find redundant relations.

The main idea is to combine the diamond lemma with the combinatorial theory of Lyndon words resp.~super letters and the $q$-commutator calculus of Section \ref{SectqCommutCalc}. In order to apply the diamond lemma we give a general construction to identify a smash product with a quotient of a free algebra, see Proposition \ref{IdentFreeAlg} in Section \ref{SectIdentFreeAlg}.

Further the PBW Criterion \ref{ThmPBWCrit} is a generalization of \cite{BergerPBW} and \cite[Sect.~4]{AS-Class} in the following sense: In  \cite{BergerPBW}  a condition involving the $q$-Jacoby identity for the generators $x_i$ occurs (it is called ``$q$-Jacobi sum''). However, this condition can be formulated more generally for iterated $q$-commutators (not only for $x_i$), so also higher than quadratic relations can be considered. The intention of \cite{BergerPBW} was a $q$-generalization of the classical PBW theorem, so powers of $q$-commutators are not covered at all and also his algebras do not contain a group algebra. On the other hand, \cite[Sect.~4]{AS-Class} deals with powers of $q$-commutators (root vector relations) and also involves 
the group algebra. But here it is assumed that 
the powers of the commutators lie in the group algebra and fulfill a certain centrality condition. As mentioned above these assumptions are in general not preserved; in the PBW Criterion \ref{ThmPBWCrit} the centrality condition is replaced by a more general condition involving the restricted $q$-Leibniz formula of Proposition \ref{PropqCommut}.

This work is organized as follows: In Section 1 we develop a general calculus for $q$-commutators in an arbitrary algebra, which is needed throughout the thesis; new formulas for $q$-commutators are found in Proposition \ref{PropqCommut}.
We recall in Section \ref{SectLyndW} the theory of Lyndon words, super letters and super words. We show that the set of all super words can be seen indeed as a set of words, i.e., as a free monoid. In Section \ref{SectCharHA} we recall the result of \cite{Helbig-Presentation} about a structural description of the here considered Hopf algebras, in terms of generators and relations. 
With this result we are able to formulate in Section 4 the main result of this work, namely the PBW basis criterion. Sections 5 to 7 are dedicated to the proof of the criterion.
Finally in Sections 8 and 9 we apply the PBW Criterion \ref{ThmPBWCrit} to classical examples and the liftings of Nichols algebras obtained in \cite{Helbig-Lift}.

    \section{$q$-commutator calculus}\label{SectqCommutCalc}
In this section let $A$ denote an arbitrary algebra over a field $\k$ of characteristic $\operatorname{char}\k=p\ge 0$.  The main result of this section is Proposition \ref{PropqCommut}, which states important $q$-commutator formulas in an arbitrary algebra.

\subsection{$q$-calculus} For every $q\in \k$ we define for $n\in\N$ and $0\le i\le n$ the \emph{$q$-numbers} $(n)_q := 1+q+q^2+\ldots+q^{n-1}$, the \emph{$q$-factorials} 
$(n)_q ! := (1)_q (2)_q \ldots (n)_q,$ 
and the \emph{$q$-binomial coefficients} $\tbinom{n}{i}_q:=\frac{(n)_q !}{(n-i)_q !(i)_q !}.$
Note that the latter right-handside is well-defined since it is a polynomial over $\Z$ evaluated in $q$.
We denote the \emph{multiplicative order} of any $q\in  \k^{\times}$ by $\ord q$.
If $q\in \k^{\times}$ and $n>1$, then 
\begin{align}\label{qBinomCoeffZero}
\binom{n}{i}_q =0\text{ for all }1\le i\le n-1 \Longleftrightarrow 
\begin{cases}
 \ord q = n ,&\mbox{if }\operatorname{char}\k=0\\
p^k\ord q =n\text{ with }k\ge 0 ,&\mbox{if }\operatorname{char}\k=p>0,
\end{cases}
\end{align}
see \cite[Cor. 2]{Radford}. Moreover for $1\le i\le n$ there are the \emph{$q$-Pascal identities}
\begin{align}\label{PascalDreieck}
q^i \binom{n}{i}_q + \binom{n}{i-1}_q = 
\binom{n}{i}_q +q^{n+1-i}\binom{n}{i-1}_q =\binom{n+1}{i}_q,
\end{align}
and the \emph{$q$-binomial theorem}: For $x,y\in A$ and $q\in \k^{\times}$ with $yx=qxy$ we have
\begin{align}\label{qBinomThm}
(x+y)^n=\sum_{i=0}^{n} \tbinom{n}{i}_q x^i y^{n-i}.
\end{align}
Note that for $q=1$ these are the usual notions.

\subsection{$q$-commutators} For all $a,b\in A$ and $q\in \k$ we define the \emph{$q$-commutator}
$$
[a,b]_{q} := ab-qba.
$$
The $q$-commutator is bilinear. If $q=1$ we get the classical commutator of an 
algebra. If $A$ is graded and $a,b$ are homogeneous 
elements, then there is a natural choice for the $q$. 
We are interested in the following special case:
\begin{ex}\label{qCommutExkX}\label{qCommutEx} 
Let $\theta\ge 1$, $X=\{ x_1,\ldots,x_{\theta}\}$, $\X$ the free monoid and $A=\kX$ the free $\k$-algebra. For an abelian group $\G$ let $\widehat{\G}$ be the character group, $g_1,\ldots,g_{\theta}\in\G$ and $\chi_1,\ldots,\chi_{\theta}\in \widehat{\G}$. If we define the two monoid maps 
$$
\deg_{\G}:\X\rightarrow \G,\ \deg_{\G}(x_i):=g_i\quad\text{and}\quad \deg_{\Gh}:\X\rightarrow \Gh,\ \deg_{\Gh}(x_i):=\chi_i,
$$ 
for all $1\le i\le\theta$, then $\kX$ is $\G$- and $\Gh$-graded. 

Let $a\in \kX$ be $\G$-homogeneous and $b\in \kX$ be $\widehat{\G}$-homogeneous. We set 
$$
g_a:=\deg_{\G}(a), \quad \chi_b:=\deg_{\Gh}(b),\quad\text{and}\quad q_{a,b}:=\chi_b(g_a).
$$ 
Further we define $\k$-linearly on $\kX$ the $q$-commutator
\begin{align}\label{qCommutExkXDefn}
 [a,b] := [a,b]_{q_{a,b}}.
\end{align}
Note that $q_{a,b}$ is a bicharacter on the homogeneous elements 
and depends only on the values 
$$
q_{ij}:=\chi_j(g_i)\text{ with }1\le i,j\le\theta.$$
For example $[x_1,x_2]=x_1x_2-\chi_2(g_1)x_2x_1=x_1x_2-q_{12}x_2x_1$. Further if $a,b$ are $\Z^{\theta}$-homogeneous they are both $\G$- and $\Gh$-homogeneous. In this case we can build iterated $q$-commutators, like $\bigl[x_1,[x_1,x_2]\bigr] =x_1[x_1,x_2]-\chi_1\chi_2(g_1)[x_1,x_2]x_1= x_1[x_1,x_2]-q_{11}q_{12}[x_1,x_2]x_1$.

\end{ex}

Later we will deal with algebras which still are  $\widehat{\G}$-graded,  but not $\G$-graded such that Eq.~\eqref{qCommutExkXDefn} is not well-defined. However, the $q$-commutator calculus, which we next want to develop, will be a major tool for our calculations such that we need the general definition with the $q$ as an index.

\begin{prop}\label{PropqCommut} For all $a,b,c,a_i,b_i\in A$, $q,q',q'',q_i,\zeta\in \k$, $1\le i\le n$ and $r\ge 1$ we have:

\emph{(1) $q$-derivation properties:}
\begin{align*}
   &[a,bc]_{qq'}=[a,b]_{q} c + q b [a,c]_{q'}, \qquad
   [ab,c]_{qq'}=a [b,c]_{q'} + q' [a,c]_{q} b,\\
   &[a,b_1\ldots b_n]_{q_1\ldots q_n}=\sum_{i=1}^{n}q_1\ldots q_{i-1} b_1\ldots b_{i-1}[a,b_i]_{q_i}b_{i+1}\ldots b_n,\\ 
   &[a_1\ldots a_n,b]_{q_1\ldots q_n}=\sum_{i=1}^{n}q_{i+1}\ldots q_{n} a_1\ldots a_{i-1}[a_i,b]_{q_i}a_{i+1}\ldots a_n.
\end{align*}

\emph{(2) $q$-Jacobi identity:}
\begin{align*}
   \bigl[[a,b]_{q'}, c\bigr]_{q''q}&=\bigl[a,[b,c]_{q}\bigr]_{q'q''} -q' b [a,c]_{q''}+ q[a,c]_{q''} b.
\end{align*}

\emph{(3) $q$-Leibniz formulas:}
\begin{align*}
  [a,b^r]_{q^r} &= \sum_{i=0}^{r-1} q^i \tbinom{r}{i}_{\zeta} 
                           b^i \bigl[\ldots\bigl[[a,\underbrace{b]_q , b\bigr]_{q\zeta}\ldots,b}_{r-i}\bigr]_{q\zeta^{r-i-1}},\\
  [a^r,b]_{q^r} &= \sum_{i=0}^{r-1} q^i \tbinom{r}{i}_{\zeta} 
                      \bigl[\underbrace{a,\ldots\bigl[a,[a}_{r-i},b]_q\bigr]_{q\zeta}\ldots\bigr]_{q\zeta^{r-i-1}}a^i.
\end{align*}

\emph{(4) restricted $q$-Leibniz formulas:} If $\operatorname{char} k=0$ and $\ord\zeta = r$, or $\operatorname{char} \k =p >0$ and $p^k \ord\zeta =r$ , then
\begin{align*}
 [a,b^r]_{q^r} &= \bigl[\ldots\bigl[[a,\underbrace{b]_q,b\bigr]_{q\zeta}\ldots,b}_{r}\bigr]_{q\zeta^{r-1}},\\
 [a^r,b]_{q^r} &= \bigl[\underbrace{a,\ldots\bigl[a,[a}_{r},b]_q\bigr]_{q\zeta}\ldots\bigr]_{q\zeta^{r-1}}.
\end{align*}
\end{prop}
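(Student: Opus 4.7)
All four statements are algebraic identities in an arbitrary associative algebra, so the plan is to verify (1) and (2) by direct expansion, then bootstrap (3) from (1) by induction, and finally deduce (4) from (3) by invoking the vanishing of the middle $q$-binomial coefficients recalled in \eqref{qBinomCoeffZero}.

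\emph{Part (1).} The two basic $q$-Leibniz rules are pure expansions: writing out
$[a,b]_{q} c + qb[a,c]_{q'} = (ab-qba)c + qb(ac-q'ca) = abc - qq'bca = [a,bc]_{qq'}$, and dually for $[ab,c]_{qq'}$, settles the two-factor case. I then prove the $n$-factor versions by an immediate induction on $n$, splitting $b_1\ldots b_n = (b_1\ldots b_{n-1})\cdot b_n$ (respectively $a_1\ldots a_n = a_1\cdot(a_2\ldots a_n)$) and feeding the inductive hypothesis into the two-factor formula; the telescoping prefactors $q_1\ldots q_{i-1}$ appear automatically.

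\emph{Part (2).} Expand both sides in terms of $a,b,c$: the left-hand side gives $abc-q'bac-q''qcab+q'q''qcba$, and the right-hand side $a(bc-qcb)-q'q''(bc-qcb)a - q'b(ac-q''ca)+q(ac-q''ca)b$. After cancellation the two match term by term.

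\emph{Part (3).} I do induction on $r$, the case $r=1$ being trivial since $\binom{1}{0}_\zeta=1$. Abbreviate $c_j:=[\ldots[[a,b]_q,b]_{q\zeta}\ldots,b]_{q\zeta^{j-1}}$, so that by definition $c_{j+1} = c_j b - q\zeta^{j}\, b\,c_j$, i.e.\ $c_j b = c_{j+1} + q\zeta^{j}\, b\, c_j$. Using (1) with $b^r=b^{r-1}\cdot b$ and the scalars $q'=q,\ q=q^{r-1}$, one gets
\begin{align*}
[a,b^r]_{q^r} = [a,b^{r-1}]_{q^{r-1}}\, b + q^{r-1} b^{r-1}[a,b]_q.
\end{align*}
Substituting the inductive expression $[a,b^{r-1}]_{q^{r-1}}=\sum_{i=0}^{r-2} q^i \binom{r-1}{i}_\zeta b^i c_{r-1-i}$ and replacing each $c_{r-1-i}b$ by $c_{r-i}+q\zeta^{r-1-i}b\,c_{r-1-i}$, then re-indexing, collects the coefficient of $b^i c_{r-i}$ (for $1\le i\le r-2$) as $q^i\bigl(\binom{r-1}{i}_\zeta+\zeta^{r-i}\binom{r-1}{i-1}_\zeta\bigr)$, which by the $q$-Pascal identity \eqref{PascalDreieck} is exactly $q^i\binom{r}{i}_\zeta$. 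The endpoint cases $i=0$ and $i=r-1$ are handled by $\binom{r-1}{0}_\zeta=\binom{r}{0}_\zeta=1$ and $1+\zeta\binom{r-1}{r-2}_\zeta=(r)_\zeta=\binom{r}{r-1}_\zeta$. The second identity follows by the same induction using the other derivation formula of (1), or by a formal left-right symmetry argument.

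\emph{Part (4).} Under the hypothesis $\ord\zeta=r$ (resp.\ $p^k\ord\zeta=r$ in characteristic $p>0$), statement \eqref{qBinomCoeffZero} forces $\binom{r}{i}_\zeta=0$ for every $1\le i\le r-1$. In the Leibniz sum of (3) only the $i=0$ term survives, namely $q^0\binom{r}{0}_\zeta c_r = c_r$, which is precisely the iterated bracket on the right-hand side.

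\emph{Expected obstacle.} Parts (1), (2) and (4) are mechanical. The genuine work is in (3): one has to pick the right induction splitting (factoring $b$ off the right rather than the left), introduce the abbreviation $c_j$ so that the recursion $c_j b = c_{j+1}+q\zeta^{j}bc_j$ becomes visible, and then carefully reconcile the resulting double sum with the $q$-Pascal identity. Choosing the wrong side for the induction, or forgetting the factor $q^{r-1}$ that comes out of the derivation rule, leads to off-by-one shifts that break the Pascal recombination; this bookkeeping is the main place to be careful.
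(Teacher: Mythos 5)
Your proposal is correct and follows essentially the same route as the paper: direct expansion for (1) and (2), induction on $r$ via the splitting $[a,b^{r}]_{q^{r}}=[a,b^{r-1}]_{q^{r-1}}b+q^{r-1}b^{r-1}[a,b]_q$ combined with the $q$-Pascal identity \eqref{PascalDreieck} for (3), and the vanishing of the middle $q$-binomial coefficients \eqref{qBinomCoeffZero} for (4). The only cosmetic differences are that the paper derives (2) from (1) by linearity rather than by full expansion, and its induction in (3) runs from $r$ to $r+1$ instead of $r-1$ to $r$; your endpoint checks and the Pascal recombination are carried out correctly.
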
 
\begin{proof} (1) The first part is a direct calculation, e.g.
\begin{align*}
 [a,bc]_{qq'}=abc- qq'bca =abc-qbac+ qbac- qq'bca =[a,b]_{q} c + q b [a,c]_{q'}.
\end{align*}
The second part follows by induction.

\noindent
(2) Using the $\k$-linearity and (1) we get the result immediately.

\noindent
(3) By induction on $r$: $r=1$ is obvious, so let $r\ge 1$. Using (1) we get 
$$
[a,b^{r+1}]_{q^{r+1}}=[a,b^r b]_{q^{r}q} = [a,b^r]_{q^r} b + q^r b^r[a,b]_q.$$
 By induction assumption 
$
[a,b^{r}]_{q^r} b= \sum_{i=0}^{r-1} q^i \tbinom{r}{i}_{\zeta} b^i \bigl[\ldots\bigl[[a,\underbrace{b]_q , b\bigr]_{q\zeta}\ldots,b}_{r-i}\bigr]_{q\zeta^{r-i-1}} b,$ where 
\begin{multline*}
b^i\bigl[\ldots\bigl[[a,\underbrace{b]_q , b\bigr]_{q\zeta}\ldots,b}_{r-i}\bigr]_{q\zeta^{r-i-1}} b =\\ b^i\bigl[\ldots\bigl[[a,\underbrace{b]_q , b\bigr]_{q\zeta}\ldots,b}_{r+1-i}\bigr]_{q\zeta^{r-i}}+q\zeta^{r-i} b^{i+1}\bigl[\ldots\bigl[[a,\underbrace{b]_q , b\bigr]_{q\zeta}\ldots,b}_{r-i}\bigr]_{q\zeta^{r-i-1}}.
\end{multline*}
In total we get
\begin{multline*}
[a,b^{r+1}]_{q^{r+1}} =
\sum_{i=0}^{r} q^i \tbinom{r}{i}_{\zeta} b^i\bigl[\ldots\bigl[[a,\underbrace{b]_q , b\bigr]_{q\zeta}\ldots,b}_{r+1-i}\bigr]_{q\zeta^{r-i}}\\
+ \sum_{i=0}^{r-1} q^{i+1} \tbinom{r}{i}_{\zeta}\zeta^{r-i} b^{i+1}\bigl[\ldots\bigl[[a,\underbrace{b]_q , b\bigr]_{q\zeta}\ldots,b}_{r-i}\bigr]_{q\zeta^{r-i-1}}.
\end{multline*}
Shifting the index of the second sum and using Eq.~\eqref{PascalDreieck} for $\zeta$ we get the formula. The second formula is proven in the same way. (4) Follows from (3) and Eq.~\eqref{qBinomCoeffZero}.
\end{proof}



\section{Lyndon words and $q$-commutators}\label{SectLyndW}

In this section we recall the theory of Lyndon words \cite{Lothaire,Reut} as far as we are concerned and then introduce the notion of super letters and super words \cite{KhPBW}.

\subsection{Words and the lexicographical order}
Let $\theta\ge 1$, $X=\{x_1,x_2,\ldots,x_{\theta}\}$ be a finite totally ordered set by $x_1<x_2<\ldots <x_{\theta}$, and $\X$ the free monoid; we think of $X$ as an alphabet and of $\X$ as the words in that alphabet including the empty word $1$. For a word $u=x_{i_1}\ldots x_{i_n}\in\X$ we define $\ell(u):=n$ and call it the \emph{length} of $u$. 

The \emph{lexicographical order} $\le$ on $\X$ is defined for $u,v\in\X$ by $u<v$ if and only if 
either $v$ begins with $u$, i.e., $v=uv'$ for some $v'\in\X\backslash\{1\}$, or if there are $w,u',v'\in \X$, $x_i,x_j\in X$ 
such that $u=wx_iu'$, $v=wx_jv'$ and $i<j$. E.g., $x_1<x_1x_2<x_2$. 

\subsection{Lyndon words and the Shirshov decomposition}
A word $u\in\X$ is called a \emph{Lyndon word} if $u\neq 1$ and $u$ is smaller than any of its proper endings, i.e., for all $v,w\in\X\backslash\{1\}$ such that $u=vw$ we have $u<w$. We denote by 
$$
\Ly:=\{u\in\X\,|\, u \text{ is a Lyndon word}\}
$$ the set of all Lyndon words. For example $X\subset\Ly$, but $x_i^n\notin \Ly$ for all $1\le i\le \theta$ and $n\ge 2$. 
Also $x_1x_2$, $x_1x_1x_2$, $x_1x_2x_2$,  $x_1x_1x_2x_1x_2\in\Ly$.

For any $u\in\X\backslash X$ we call the decomposition $u=vw$ with $v,w\in \X\backslash\{1\}$ such that $w$ is the minimal (with respect to the lexicographical order) ending  the \emph{Shirshov decomposition} of the word $u$. We will write in this case $$\Sh{u}{v}{w}.$$ E.g., $\Sh{x_1x_2}{x_1}{x_2}$, $\Sh{x_1x_1x_2x_1x_2}{x_1x_1x_2}{x_1x_2}$, $\nSh{x_1x_1x_2}{x_1x_1}{x_2}$.
If $u\in\Ly\backslash X$, this is equivalent to $w$ is the longest proper ending of $u$ such that $w\in\Ly$.

\begin{defn}
We call a subset $L\subset \Ly$  \emph{Shirshov closed} 
if
  $X\subset L$,
  and for all $u\in L$ with $\Sh{u}{v}{w}$ also $v,w\in L$.
\end{defn}

For example $\Ly$ is Shirshov closed, and if $X=\{x_1,x_2\}$, then $\{x_1,x_1x_1x_2,x_2\}$ is  not Shirshov closed,  whereas $\{x_1,x_1x_2,x_1x_1x_2,x_2\}$ is.

\subsection{Super letters and super words} Let the free algebra $\kX$ be graded as in Section \ref{qCommutExkX}. For any $u\in\Ly$ we define recursively on $\ell(u)$ the map 
\begin{align}\label{DefnSuperLett}
[\,.\,]:\Ly\rightarrow\kX,\quad u \mapsto [u].
\end{align}
If $\ell(u)=1$, then set $[x_i]:=x_i$ for all $1\le i\le\theta$. Else if $\ell(u)>1$ and $\Sh{u}{v}{w}$ we define $[u]:=\bigl[[v],[w]\bigr]$. This map is well-defined since inductively all $[u]$ are $\Z^{\theta}$-homogeneous such that we can build iterated $q$-commutators; see Section \ref{qCommutExkX}. The elements $[u]\in\kX$ with $u\in\Ly$ are called \emph{super letters}. E.g. $[x_1x_1x_2x_1x_2]=\bigl[[x_1x_1x_2],[x_1x_2]\bigr]=\bigl[[x_1,[x_1,x_2]],[x_1,x_2]\bigr]$.
If $L\subset \Ly$ is Shirshov closed then the subset of $\kX$
$$
[L]:=\bigl\{[u]\,\big|\, u\in L\bigr\}
$$ 
is a set of iterated $q$-commutators. Further 
$
[\Ly]=\bigl\{[u]\,\big|\, u\in\Ly\bigr\}
$
is the set of all super letters and the map 
$[\,.\,]:\Ly\rightarrow[\Ly]$ is a bijection, which follows from \cite[Lem.~2.5]{Helbig-Presentation}. Hence we can define an order $\le$ of the super letters $[\Ly]$ by 
$$
[u]< [v]:\Leftrightarrow u<v,
$$ thus $[\Ly]$ is a new alphabet containing the original alphabet $X$; so the name ``letter'' makes sense. Consequently, products of super letters are called \emph{super words}. We denote 
$$
[\Ly]^{(\N)}:=\bigl\{[u_1]\ldots [u_n]\,\bigl|\,n\in\N,\, u_i\in\Ly\bigr\}
$$
the subset of $\kX$ of all super words. Any super word has a unique factorization in super letters \cite[Prop.~2.6]{Helbig-Presentation}, hence we can define the lexicographical order on $\SupW$, as defined above on regular words. We denote it also by $\le$.

\subsection{A well-founded ordering of super words}\label{SectWellFoundOrder} The \emph{length} of a super word $U=[u_1][u_2]\ldots[u_n]\in\SupWL$ is defined as 
$
\ell(U):=\ell(u_1u_2\ldots u_n).
$
\begin{defn}
For $U, V\in \SupW$ we define $U\prec V$ by 
\begin{itemize}
\item $\ell(U)<\ell(V)$, or
\item $\ell(U)=\ell(V)$  and $U>V$ lexicographically in $\SupW$.
\end{itemize}
\end{defn}
This defines a total ordering of $\SupW$ with minimal element $1$. As $X$ is assumed to be finite, there are only finitely many super letters of a given length. Hence every nonempty subset of $\SupW$ has a minimal element, or equivalently, $\preceq$ fulfills the descending chain condition: $\preceq$ is \emph{well-founded}. This makes way for inductive proofs on $\preceq$.

\subsection{The free monoid $\Lv$}\label{SectFreeMonoidXL}

Let  $L\subset\Ly$. We want to stress the two different aspects of a super letter $[u]\in [L]$: 
\begin{itemize}
 \item  On the one hand it is by definition a polynomial $[u]\in\kX$.
\item On the other hand, as we have seen, it is a letter in the alphabet $[L]$.
\end{itemize}
To distinguish between these two point of views we define for the latter aspect a new alphabet corresponding to the set of super letters $[L]$:
To be technically correct we regard the free monoid $\la 1,\ldots,\theta\ra$ of the ciphers $\{1,\ldots,\theta\}$ (telephone numbers), together with the trivial bijective monoid map
 $
\nu: \la x_1,\ldots,x_{\theta}\ra \rightarrow  \la 1,\ldots,\theta\ra, \ x_{i}\mapsto i\ \text{ for all }1\le i\le \theta.
$
Hence we can transfer the lexicographical order to $\la 1,\ldots,\theta\ra$.  The image $\nu(\Ly)\subset\la 1,\ldots,\theta\ra$  
can be seen as the set of ``Lyndon telephone numbers''. We define the set 
$$
X_{L}:=
\{\fvsl u\ |\ u\in\mathcal \nu(L)\}.
$$
Note that if  $X\subset L$ (e.g.~$L\subset\Ly$ is Shirshov closed), then  $X\subset X_L$. 
E.g., if $X=\{x_1,x_2\}\subset L=\{x_1,x_1x_2,x_2\}$ then $\nu(L)=\{1,12,2\}$ and $X\subset X_L=\{x_1,x_{12},x_2\}$.

\begin{notation}
From now on we will not distinguish between $L$ and $\nu(L)$ and write for example $x_u$ instead of $x_{\nu(u)}$ for $u\in L$. In this manner we will also write $g_{\nu(u)},\chi_{\nu(u)}$ equivalently for $g_u,\chi_{u}$ if $u\in L$, as defined in Example \ref{qCommutExkX}. 
E.g.~$g_{112}=g_{x_1x_1x_2}=g_{x_1}g_{x_1}g_{x_2}=g_{1}g_{1}g_{2}$, $\chi_{112}=\chi_{x_1x_1x_2}=\chi_{x_1}\chi_{x_1}\chi_{x_2}=\chi_{1}\chi_{1}\chi_{2}$.
\end{notation}
\noindent
As seen in \cite[Prop.~2.6]{Helbig-Presentation} we have the bijection of super words and the free monoid $\Lv$
\begin{align}\label{DefnUv}
\rho:\SupWL\rightarrow\Lv,\quad 
\rho\bigl([u_1]
\ldots[u_n]\bigr):=\fvsl{u_1}
\ldots\fvsl{u_n}. 
\end{align}
E.g., $[x_1x_2x_2][x_1x_2]\stackrel{\rho}{\mapsto}\fvsl{122}\fvsl{12}$. Hence we can transfer all orderings to $\Lv$: For all $U,V\in\Lv$ we set 
$$
 \ell(U):=\ell(\rho^{-1}(U)), \quad
U<V  :\Leftrightarrow \rho^{-1}(U)< \rho^{-1}(V), \quad
U\prec V  :\Leftrightarrow  \rho^{-1}(U)\prec \rho^{-1}(V).
$$

  \section{A class of pointed Hopf algebras}\label{SectCharHA}
In this chapter we deal with the class of pointed Hopf algebras for which we give the PBW basis criterion. Let us recall the notions and results of \cite[Sect.~3]{KhPBW}: A Hopf algebra $A$ is called a \emph{character Hopf algebra} if it is generated as an algebra by 
elements $a_1,\ldots ,a_{\theta}$ and an abelian group $G(A)=\G$ of all group-like elements such that  for all $1\le i\le \theta$ there are $g_i\in\G$ and $\chi_i\in\Gh$ with
\begin{align*}
\Delta(a_i)=a_i\o 1 +g_i\o a_i\qquad \text{and}\qquad
 ga_i = \chi_i(g)a_i g.
\end{align*}
As mentioned in the introduction this covers a wide class of examples of Hopf algebras. 

\begin{thm}\emph{\cite[Thm.~3.4]{Helbig-Presentation}} 
\label{PropIdealCharHopfAlg}
If $A$ is a character Hopf algebra, then
$$
A\cong (\kX \# \k[\G])/I,
$$
where the smash product $\kX\#\k[\G]$ and the ideal $I$ are constructed in the following way:
\end{thm}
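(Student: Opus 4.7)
The plan is to construct a canonical surjective Hopf algebra homomorphism $\pi\colon \kX \# \k[\G] \to A$ and then take $I := \ker \pi$; the real work of the theorem is then to match this kernel with the explicit ideal produced in the construction. First I would set up $\kX \# \k[\G]$ as a Hopf algebra using the character action $g \cdot x_i := \chi_i(g)\, x_i$ of $\k[\G]$ on the free algebra $\kX$. In the resulting smash product the generators $x_i$ are $(g_i,1)$-skew-primitive, and the commutation rule $g\, x_i = \chi_i(g)\, x_i\, g$ holds by construction — precisely the rule assumed in $A$.

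By the universal property of the free algebra, the assignment $x_i \mapsto a_i$ extends uniquely to an algebra map $\kX \to A$; combining it with the natural inclusion $\k[\G] \hookrightarrow A$ and invoking the matching commutation rules above, one obtains an algebra homomorphism $\pi\colon \kX \# \k[\G] \to A$. A check on generators shows that $\pi$ respects the comultiplication (since $\Delta(a_i) = a_i \o 1 + g_i \o a_i$ equals $(\pi \o \pi)\Delta(x_i)$) and the counit, hence is a Hopf algebra map. Since $A$ is, by assumption, generated as an algebra by $\{a_i\}$ and $\G$, the map $\pi$ is surjective, and the first isomorphism theorem gives $A \cong (\kX \# \k[\G])/\ker \pi$.

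It remains to identify $\ker \pi$ with the ideal $I$ produced by the construction. The inclusion $I \subseteq \ker \pi$ is a direct verification: each declared generator of $I$ maps to zero in $A$ under $\pi$ by the defining relations of $A$. The main obstacle is the reverse inclusion $\ker \pi \subseteq I$, which asserts that the chosen generators already account for \emph{every} relation holding in $A$. The natural route is to filter both sides (for instance, by the coradical filtration, which on $\kX \# \k[\G]$ is controlled by total degree in the $x_i$) and argue inductively: at each level, the skew-primitivity of the generators $a_i$ forces new relations to have a constrained coalgebra shape, and one reduces modulo $I$ to lower-filtration terms until only an identity in $\k[\G]$ remains, which must then vanish trivially. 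Verifying that the explicitly constructed $I$ is rich enough to support this reduction at every stratum is the substantive content of the theorem.
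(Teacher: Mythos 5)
This theorem is quoted in the paper from \cite[Thm.~3.4]{Helbig-Presentation} without an in-text proof, so the comparison is against the intended argument, which the surrounding text makes clear rests on Kharchenko's PBW theorem in hard super letters (\cite[Thm.~2]{KhPBW}). Your first half is fine and matches the standard setup: the smash product $\kX\#\k[\G]$ with $g\cdot x_i=\chi_i(g)x_i$ and $\Delta(x_i)=x_i\o 1+g_i\o x_i$ is a Hopf algebra, $x_i\mapsto a_i$, $g\mapsto g$ induces a surjective Hopf algebra map $\pi$, and $A\cong(\kX\#\k[\G])/\ker\pi$.

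The genuine gap is in the second half, and it is not merely that the reduction argument is left unfinished. The theorem does not hand you a fixed ideal $I$ and ask you to show $\ker\pi=I$: the construction in Section \ref{SectIdealOfCharHA} depends on a choice of a Shirshov closed set $L\subset\Ly$, of integers $N_u$, and of elements $\red{w}\prec_L[w]$ for $w\in C(L)$ and $\redh{u}\prec_L[u]^{N_u}$ for $u\in D(L)$. The assertion is existential --- some such choice realizes $\ker\pi$ --- and your proposal never says which choice or why it exists. The intended route is: take $L$ to be the set of Lyndon words whose super letters are \emph{hard} in $A$ (this set is Shirshov closed); Kharchenko's theorem then says the ordered monomials $[u_1]^{r_1}\cdots[u_t]^{r_t}g$ with $u_i\in L$ form a $\k$-basis of $A$. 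That basis is exactly what produces the data: for $w\in C(L)$ the super letter $[w]$ is not hard, so its image in $A$ lies in the span of strictly smaller PBW monomials, and lifting that expression gives $\red{w}$ with the crucial triangularity $\red{w}\prec_L[w]$ (similarly $\redh{u}\prec_L[u]^{N_u}$); and the same basis gives $\ker\pi\subseteq I$, since modulo the generators of $I$ every element rewrites into a combination of PBW monomials, which are linearly independent in $A$. Your suggested coradical-filtration induction does not obviously deliver the $\prec_L$-triangularity of the relations, and that triangularity is not a cosmetic detail --- it is precisely the compatibility condition ($f_\sigma\prec_\diamond W_\sigma$) that the diamond lemma application in Section \ref{SectProofPBWCrit} depends on. Without invoking the hard-super-letter PBW theorem (or reproving it), the substantive content of the statement is not reached.
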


\subsection{The smash product $\kX\# \k[\G]$}\label{SectSmashProto}
Let $\kX$ be $\G$- and $\Gh$-graded as in Section \ref{qCommutExkX}, and $\k[\G]$ be endowed with the usual bialgebra structure $\Delta(g)=g\o g$ and $\varepsilon(g)=1$ for all $g\in\G$. 
Then we define 
$$
g\cdot x_i := \chi_i(g)x_i,\ \text{ for all }1\le i\le\theta.
$$
In this case, $\kX$ is a $\k[\G]$-module algebra and we calculate $gx_i=\chi_i(g)x_ig$, $gh=hg=\varepsilon(g)hg$ in $\kX\# \k[\G]$.
Further $\kX\# \k[\G]$ is a Hopf algebra with structure determined for all $1\le i \le\theta$ and $g\in\G$ by 
\begin{align*}
 \Delta(x_i):=x_i\o 1 + g_i\o x_i\qquad\text{and}\qquad  \Delta(g):=g\o g.
\end{align*}

\subsection{Ideals associated to Shirshov closed sets}\label{SectIdealOfCharHA}
In this subsection we fix a Shirshov closed $L\subset\Ly$. 
We want to introduce the following notation 
for an $a\in \kX\# \k[\G]$ and  
$W\in\SupW$: 
We will write $a\prec_L W$ (resp. $a\preceq_L W$), if 
$a$ is a linear combination of 
\begin{enumerate}
 \item[\textbullet] $U\in\SupWL$ with $\ell(U)=\ell(W)$, $U>W$ (resp. $U\ge W$), and
\item[\textbullet]  $Vg$ with $V\in\SupWL$, $g\in\G$,  $\ell(V)<\ell(W)$.
\end{enumerate}

Furthermore,  we set for each $u\in L$
either $N_u:=\infty$ or $N_u:=\ord q_{u,u}$  (resp.~$N_u:=p^k\ord q_{u,u}$ with $k\ge 0$ if $\operatorname{char} \k=p>0$) and we want to distinguish the following two sets of words depending on $L$:
\begin{align*}
C(L) &:= \bigl\{w\in\X\backslash L \ |\ \exists u,v\in L:  w=uv,\ u<v,\  \text{and}\ \Sh{w}{u}{v}\bigr\},\\
D(L) &:= \bigl\{u\in L \ |\ N_u<\infty\}.
\end{align*}
Note that $C(L)\subset \Ly$ and $D(L)\subset L\subset \Ly$ are sets of Lyndon words. For example, if $L=\{x_1,x_1x_1x_2,x_1x_2,x_2\}$, then  $C(L)=\{x_1x_1x_1x_2$, $x_1x_1x_2x_1x_2$, $x_1x_2x_2\}$.

Moreover, let $\red{w} \in (\kX\# \k[\G])^{\chi_{w}}$ for all $w\in C(L)$ such that $\red{w}\prec_L [w]$;
and let $\redh{u} \in (\kX\# \k[\G])^{\chi_u^{N_u}}$ for all $u\in D(L)$ such that $\redh{u}\prec_L [u]^{N_u}$. 
Then let $I$ be the $\Gh$-homogeneous ideal of $\kX\# \k[\G]$ generated by the following elements: 
\begin{align}
[w]         - \red{w}&    &&\text{for all } w\in C(L) ,\label{ThmPBWCritIdluv}\\
 [u]^{N_u}   - \redh{u}&    &&\text{for all }  u\in D(L).\label{ThmPBWCritIdluN}
\end{align}

\section{A PBW basis criterion}\label{SectPBWCrit}
In this section we want to state a PBW basis criterion which is applicable for any character Hopf algebra.
Suppose we have a smash product $\kX\# \k[\G]$ together with an ideal $I$ as in Sections \ref{SectSmashProto} and \ref{SectIdealOfCharHA}.

At first we need to define several algebraic objects for the formulation of the PBW Criterion \ref{ThmPBWCrit}. The main idea is not to work in the free algebra $\kX$ but in the free algebra $\kLv$ where  $\Lv$ is the free monoid of Section \ref{SectFreeMonoidXL}.

\subsection{The free algebra $\kLv$ and $\kLv\# \k[\G]$} 
In Section \ref{SectFreeMonoidXL} we associated to a super letter $[u]\in[L]$ a new variable $x_u\in X_L$, where $X_L$ contains $X$. Hence the free algebra $\kLv$ also contains $\kX$.
We define the action of $\G$ on $\kLv$ and $q$-commutators by
\begin{align*}
g\cdot \fvsl{u}&:=\chi_u(g) \fvsl{u} && \text{for all }g\in\G,u\in L,\\
[\fvsl u,\fvsl v]&:= \fvsl u\fvsl v -q_{u,v} \fvsl v\fvsl u && \text{for all }u,v\in L.
\end{align*}
In this way $\kLv$ becomes a $\k[\G]$-module algebra and $g\fvsl{u}=\chi_u(g) \fvsl{u}g$ in the smash product $\kLv\# \k[\G]$.

\subsection{The subspace $I_{\prec U}\subset \kLv\# \k[\G]$}\label{SectSubspaceofIdeal}

Via $\rho$ of  Eq.~\eqref{DefnUv} we now define   certain elements of $\kLv\# \k[\G]$: For all $w\in C(L)$ resp.~$u\in D(L)$  we write $\red{w}=\sum \alpha U+\sum \beta Vg\prec_L [w]
$ resp.~$\redh{u}=\sum \alpha' U'+\sum \beta' V'g'\prec_L [u]^{N_u}
$, with $\alpha,\alpha',\beta,\beta'\in\k$ and $U,U',V,V'\in \SupWL$ 
(such decompositions may not be  unique; we just fix one). Then we define in $\k\Lv\#\k[\G]$
\begin{align*}
\redv{w} :=\sum \alpha\rho(U)+\sum \beta \rho(V) g 
\quad\text{resp.}\quad
\redhv{u}:=\sum \alpha' \rho(U')+\sum \beta' \rho(V')g'.
\end{align*}
For all $u,v\in L$ with $u<v$ we define elements $\redbrv{u}{v}\in \kLv\# \k[\G]$: If $w=uv$ and $\Sh{w}{u}{v}$ we set
$$
\redbrv{u}{v}:=\begin{cases}   \fvsl{w}  ,  &\mbox{if }w\in L, \\ 
                              \redv{w}   ,  &\mbox{if }w\notin L.
              \end{cases}
$$
Else if $\nSh{w}{u}{v}$ let $\Sh{u}{u_1}{u_2}$. Then we define inductively on the length of $\ell(u)$ 
\begin{align}\label{RedCommutDefnVariableNotSh}
\redbrv{u}{v} :=
     \partial_{u_1}^{\rho}(\redbrv{u_2}{v}) +q_{u_2,v}\redbrv{u_1}{v}\fvsl{u_2} -q_{u_1,u_2}\fvsl{u_2}\redbrv{u_1}{v},
\end{align}
where 
$\partial_{u_1}^{\rho}$ is defined $\k$-linearly by
\begin{align*}
&\partial_{u_1}^{\rho}(\fvsl{l_{1}}\ldots \fvsl{l_n}):=\redbrv{u_1}{l_1}\fvsl{l_{2}}\ldots \fvsl{l_n} + \sum_{i=2}^{n} q_{u_1,l_1\ldots l_{i-1}} \fvsl{l_1}\ldots \fvsl{l_{i-1}} \bigl[\fvsl{u_1},\fvsl{l_i}\bigr] \fvsl{l_{i+1}}\ldots \fvsl{l_n},\\
&\partial_{u_1}^{\rho}(\rho(V)g):= \bigl[\fvsl{u_1},\rho(V)\bigr]_{q_{u_1,u_2v}\chi_{u_1}(g)} g.
\end{align*}

For any $U\in \Lv$ let $I_{\prec U}$  denote the subspace of $\kLv\# \k[\G]$ spanned by the elements
\begin{align*}
&Vg\bigl([\fvsl u,\fvsl v] -\redbrv{u}{v} \bigl)Wh & &\text{for all } u,v\in L,u<v, \\
&V'g'\bigl(\fvsl{u}^{N_u} - \redhv{u}  \bigr)W' h'  &  &\text{for all }u\in L, N_u<\infty
\end{align*}
with $V,V',W,W'\in\Lv$, $g,g',h,h'\in\G$ such that  
$$V\fvsl u\fvsl v W\prec U\quad\text{ and }\quad V'\fvsl{u}^{N_u} W'\prec U.$$

Finally we want to define the following elements of $\kLv\# \k[\G]$ for $u,v,w\in L$, $u<v<w$, resp.~$u\in L$, $N_u<\infty$, $u\le v$, resp.~
$v<u$:
\begin{align*}
J(u<v<w)&:=[\redbrv{u}{v},\fvsl w]_{q_{uv,w}} - [\fvsl u,\redbrv{v}{w}]_{q_{u,vw}}\\
        &\qquad\qquad\qquad+ q_{u,v} \fvsl v[\fvsl u,\fvsl w] - q_{v,w} [\fvsl u,\fvsl w]\fvsl v,\\
L(u,u< v)&:=  \bigl[\underbrace{\fvsl u,\ldots[\fvsl u}_{N_u-1},\redbrv{u}{v}]_{q_{u,u} 
q_{u,v}}\ldots\bigr]_{q_{u,u}^{N_u-1} q_{u,v}}\!\! - [\redhv{u},\fvsl v]_{q_{u,v}^{N_u}},\\
L(u,u\le v)&:=\begin{cases}
             L(u,u< v), &\text{if }u<v,\\
             L(u):=-[\redhv{u},\fvsl u]_{1}, &\text{if }u=v,
              \end{cases}\\
L(u,v<u)&:=\bigl[\ldots[\redbrv{v}{u},\underbrace{\fvsl u]_{q_{v,u}q_{u,u}}\ldots,\fvsl u}_{N_u-1}\bigr]_{q_{v,u}q_{u,u}^{N_u-1}} \!\!- [\fvsl v,\redhv{u}]_{q_{v,u}^{N_u}}.
\end{align*}

\begin{rem}
Note that 
$$
J(u<v<w)\in \bigl([\fvsl u,\fvsl v] -\redbrv{u}{v},[\fvsl v,\fvsl w] -\redbrv{v}{w}\bigr)
$$ 
by the $q$-Jacobi identity of Proposition \ref{PropqCommut}, and 
$$
L(u,u\le v)\in\bigl([\fvsl u,\fvsl v] -\redbrv{u}{v},\ \fvsl{u}^{N_u} - \redhv{u}\bigr),\quad L(u,v<u)\in\bigl([\fvsl v,\fvsl u] -\redbrv{v}{u},\ \fvsl{u}^{N_u} - \redhv{u}\bigr)
$$ 
by the restricted $q$-Leibniz formula of Proposition \ref{PropqCommut}.
\end{rem}

\subsection{The PBW criterion}



\begin{thm}\label{ThmPBWCrit} Let $L\subset\Ly$ be Shirshov closed and $I$ be an ideal of $\k\X\# \k[\G]$ as in Section \ref{SectIdealOfCharHA}. Then the following assertions are equivalent:

\begin{enumerate}
  \item[\emph{(1)}] The residue classes of $[u_1]^{r_1}[u_2]^{r_2}\ldots [u_t]^{r_t}g$ with $t\in\mathbb N$, $u_i\in L$, $u_1>\ldots>u_t$, $0< r_i<N_{u_i}$, $g\in\G$, form a $\k$-basis of the quotient algebra $(\k\X\# \k[\G])/I$.
  \item[\emph{(2)}] The algebra $\kLv \# \k[\G]$ respects the following conditions:\\ 
    \emph{(a)} \emph{$q$-Jacobi condition:} $\forall$ $u,v,w\in L$, $u<v<w$:
\begin{align*}
 J(u<v<w)\in I_{\prec\fvsl u\fvsl v\fvsl w}.
\end{align*}
  \emph{(b)} \emph{restricted $q$-Leibniz conditions:} $\forall$ $u,v\in L$ with $N_u<\infty$, 
$u\le v$ resp.~$v<u$:

           \hspace{0.6cm}\emph{\ (i)} $L(u,u\le v)\in I_{\prec\fvsl{u}^{N_u}\fvsl v},$ resp.

          \hspace{0.6cm}\emph{(ii)} $L(u,v<u)\in I_{\prec\fvsl v\fvsl{u}^{N_u}},$

\item[\emph{(2')}] The algebra $\kLv \# \k[\G]$ respects the following conditions:\\ 
    \emph{(a)} Condition \emph{(2a)} 
only for $uv\notin L$ or $\nSh{uv}{u}{v}$.\\
     \emph{(b)} \emph{(i)} Condition \emph{(2bi)} only for $u=v$ 
and $u<v$ 
where $v
\neq uv'$ 
for all $v'\in L$.

 \hspace{0.6cm}\emph{(ii)} Condition \emph{(2bii)} only for $v<u$ where $v
\neq v'u$ for all
$v'\in L$.
\end{enumerate}
\end{thm}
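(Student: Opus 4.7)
The plan is to derive the criterion from the Diamond Lemma (Theorem~\ref{ThmDL}) applied to the smash product $\kLv\# \k[\G]$, combined with the identification $(\kX\# \k[\G])/I \cong (\kLv \# \k[\G])/J$ from Proposition~\ref{IdentFreeAlg}, where $J$ is generated by the commutator relations $[\fvsl u,\fvsl v] - \redbrv{u}{v}$ for $u<v$ in $L$ and the power relations $\fvsl u^{N_u} - \redhv u$ for $u\in D(L)$. The well-founded ordering $\prec$ on $\Lv$ from Section~\ref{SectWellFoundOrder}, extended to $\Lv\times\G$ in the natural way, will serve as the compatible ordering needed by the Diamond Lemma.

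First I would set up the reduction system: the commutator rule $\fvsl u \fvsl v \rightarrow q_{u,v}\fvsl v \fvsl u + \redbrv{u}{v}$ for $u,v\in L$ with $u<v$, the power rule $\fvsl u^{N_u}\rightarrow \redhv u$ for $u\in D(L)$, and the smash rules $g\fvsl u \rightarrow \chi_u(g)\fvsl u g$, $gh\rightarrow (gh)$. By the very definition of $\prec_L$ (and hence of $I_{\prec U}$), each such rule strictly decreases the leading monomial with respect to $\prec$, so compatibility with the order is built in. The set of irreducible monomials is then exactly the ordered products $\fvsl{u_1}^{r_1}\cdots\fvsl{u_t}^{r_t}g$ with $u_1>\cdots>u_t$ in $L$, $0<r_i<N_{u_i}$, $g\in\G$, which matches the PBW set of~(1).

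For the equivalence (1)$\Leftrightarrow$(2), I would invoke the Diamond Lemma: the images of the irreducible monomials form a basis of the quotient if and only if every overlap ambiguity resolves. The essentially nontrivial ambiguities are the triple overlap $\fvsl u\fvsl v\fvsl w$ with $u<v<w$ (commutator with commutator) and the overlaps $\fvsl u^{N_u}\fvsl v$ and $\fvsl v\fvsl u^{N_u}$ (commutator with power). Performing each ambiguity by reducing in the two possible orders and subtracting, the $q$-derivation and $q$-Jacobi formulas of Proposition~\ref{PropqCommut} yield exactly $J(u<v<w)$ in the first case, while the restricted $q$-Leibniz formula produces $L(u,u\le v)$ and $L(u,v<u)$ in the power cases; resolvability then means precisely that these elements lie in $I_{\prec U}$ for the appropriate $U$. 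Ambiguities involving the group elements resolve tautologically from the smash-product axioms, and compatibilities of the commutator/power rules with the smash rules reduce to the $\Gh$-homogeneity built into $\redbrv{u}{v}$ and $\redhv u$.

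The implication (2)$\Rightarrow$(2') is immediate. Conversely, I would show the omitted instances are redundant by induction on $\prec$: if $uv\in L$ with $\Sh{uv}{u}{v}$, then $\redbrv{u}{v}=\fvsl{uv}$ is definitional, and the recursive definition of $\redbrv{uv}{w}$ in Eq.~\eqref{RedCommutDefnVariableNotSh} combined with the $q$-Jacobi identity directly exhibits $J(u<v<w)$ as an element of $I_{\prec\fvsl u\fvsl v\fvsl w}$; similarly, if $v=uv'$ with $v'\in L$, applying the restricted $q$-Leibniz formula of Proposition~\ref{PropqCommut} to the inner commutator reduces $L(u,u\le v)$ to the admitted instance $L(u,u\le v')$ modulo $I_{\prec}$, and the case $v<u$ with $v=v'u$ is symmetric. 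The main obstacle is precisely this bookkeeping step: expanding the two reductions of $\fvsl u\fvsl v\fvsl w$ and $\fvsl u^{N_u}\fvsl v$ via the $q$-derivation rules, collecting terms, and tracing through the recursion defining $\redbrv{u}{v}$ for non-Shirshov pairs demands careful management of $\prec$ and of the $\Gh$-grading so that all error terms genuinely lie in $I_{\prec U}$ rather than merely being smaller in some weaker sense.
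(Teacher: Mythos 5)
Your plan follows the paper's own proof essentially step for step: the identification of $(\k\X\#\k[\G])/I$ with a quotient of $\k\la X_L,\G\ra$ via Proposition~\ref{IdentFreeAlg} (in the form of Corollary~\ref{CorIdentFreeAlg}), the same reduction system fed into Bergman's Diamond Lemma with the ordering $\orddl$, the resolution of the overlaps $\fvsl u\fvsl v\fvsl w$, $\fvsl u^{N_u}\fvsl v$, $\fvsl v\fvsl u^{N_u}$ and the power self-overlap yielding exactly $J(u<v<w)$ and the $L$-conditions, and the redundancy argument for (2') via the recursion \eqref{RedCommutDefnVariableNotSh} and the restricted $q$-Leibniz formula. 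The approach and all key ingredients coincide with the paper's Section~\ref{SectProofPBWCrit}.
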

We need to formulate several statements over the next sections. Afterwards the proof of Theorem \ref{ThmPBWCrit} 
will be carried out in Section \ref{SectProofPBWCrit}.

\section{$(\kX \# H)/I$ as a quotient of a free algebra}\label{SectIdentFreeAlg}
In order to make the diamond lemma applicable for $(\kX \# H)/I$, also not just for the regular letters $X$ but for some super letters $[L]$, we will define a quotient of a certain free algebra, which is the special case of the following general construction: 

In this section let $X, S$ be arbitrary sets such that $X\subset S$, and $H$ be a bialgebra with $\k$-basis $G$. Then 
$$
\kX\subset \k\la S\ra\quad\text{and}\quad H=\kspan G
\subset \k\la G\ra,
$$
if we view the set $G$ as variables. Further we set $\la S,G\ra :=\la S\cup G\ra$ where we may assume that the union is disjoint. 
By omitting $\o$
$$
\kX\o H = \kspan \{u g\,|\, u\in\X, g\in G\}\subset\k\la S, G\ra
$$

Now let $\kX$  be a  $H$-module algebra. Next we define the ideals corresponding to the extension of the variable set $X$ to $S$, and to the smash product structure and the multiplication of $H$, and study their properties afterwards.

\begin{defn}\label{defnIdeals}
(1)  Let $A$ be an algebra, $B\subset A$ a subset. Then let $(B)_A$ denote the 
ideal generated by the set $B$. 

(2) Let $f_s\in\kX$ for all $s\in S$. Further let $1_H\in G$  and $f_{gh}:=gh\in H=\kspan G$ for all $g, h\in G$.
We then define the ideals
\begin{align*}
\idl{S}  := (& s-f_s \ |\ s\in S)_{\k\la S,G\ra},\\ 
\idl{G} :=\bigl(& gs - (g\sw1\cdot f_s) g\sw2,\  g h-f_{gh},\ 1_H- 1
\ |\   g, h\in G, s\in S\bigr)_{\k\la S,G\ra} ,
\end{align*}
where $1$ is the empty word  in $\k\la S,G\ra$.




\end{defn}

\begin{rem}   We may assume that $1_H\in G$, if $H\neq 0$: Suppose $1_H\notin G$ and write $1_H$ 
as a linear combination of $G$. Suppose all coefficients are $0$, then $1_H=0_H$ hence $H=0$; a contradiction. So there is a $g$ with non-zero coefficient and we can exchange this $g$ with $1_H$.
\end{rem}

\begin{ex}\label{ExIdentFreeAlgH=kG}
Let $H=\k[\G]$ be the group  algebra with the usual bialgebra structure $\Delta(g)=g\o g$ and $\varepsilon(g)=1$. Here $G=\G$, 
 $f_{gh}\in \G$ is just the product in the group, and 
$$
\idl\G=\bigl(  gs- (g\cdot f_s) g, \;  gh-f_{gh},\; 1_\G - 1\;|\;  g,h\in\G,\;s\in S\bigr).
$$
\end{ex}

\begin{lem}\label{PropertyOfIdealsISandIG} 
For any $g\in\G$ we have $$ g( \k\la S,G\ra)\subset \kspan\{u g\,|\, u\in\X, g\in G\} + \idl{G}.$$
\end{lem}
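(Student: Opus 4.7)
The plan is to induct on the length of a monomial $w=s_1s_2\ldots s_n\in\la S,G\ra$ (each $s_i\in S\cup G$). By $\k$-linearity of the claim it suffices to handle such monomials, and it suffices to prove the inclusion for $g\in G$: the case $g\in \G$ (or indeed any $g\in H$) then follows by expanding in the basis $G$. The base case $n=0$ is immediate: $gw=g=1\cdot g$, and $1\in\la X\ra$ is the empty word, so $g\in\kspan\{ug'\mid u\in\la X\ra,g'\in G\}$ trivially.

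For the inductive step, split $w=s_1w'$ with $w'$ of length $n-1$ and distinguish two cases according to whether $s_1$ lies in $G$ or in $S$. If $s_1\in G$, then the defining relation $gs_1-f_{gs_1}\in\idl G$ yields $gw\equiv f_{gs_1}w'\pmod{\idl G}$; since $f_{gs_1}=gs_1\in H=\kspan G$, this is a $\k$-linear combination of terms $hw'$ with $h\in G$, and the inductive hypothesis applied to each such $hw'$ completes this case. If instead $s_1\in S$, the relation $gs_1-(g\sw1\cdot f_{s_1})g\sw2\in\idl G$ gives
\[
gw\equiv (g\sw1\cdot f_{s_1})\,g\sw2\, w'\pmod{\idl G}.
\]
Writing $\Delta(g)=\sum g\sw1\otimes g\sw2$ already in the basis $G\otimes G$, we may assume $g\sw2\in G$ summand-by-summand. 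Since $\kX$ is an $H$-module algebra, $g\sw1\cdot f_{s_1}\in\kX$, hence is a $\k$-linear combination of words $u\in\la X\ra$, so the expression above is a finite $\k$-linear combination of monomials $u\cdot h\cdot w'$ with $u\in\la X\ra$ and $h\in G$. The inductive hypothesis reduces each $hw'$ modulo $\idl G$ to an element of $\kspan\{u'g'\mid u'\in\la X\ra, g'\in G\}$; left-multiplying by the word $u\in\la X\ra$ keeps the main part in the same span (since $\la X\ra$ is closed under concatenation) and keeps the remainder in $\idl G$ (since $\idl G$ is a two-sided ideal). This closes the induction.

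The only real point of care—and the would-be main obstacle—is that the Sweedler factor $g\sw2$ needs to be expressed in the basis $G$ (not merely in $H$) in order to legitimately apply the inductive hypothesis to $g\sw2 w'$; this is resolved at the very start of the $s_1\in S$ case by fixing the expansion $\Delta(g)\in\kspan(G\otimes G)$ and treating the resulting finite sum term by term. Note that the defining relation $1_H-1\in\idl G$ plays no role in this lemma; it is needed only for consistency of the identification elsewhere.
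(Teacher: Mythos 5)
Your proof is correct and follows essentially the same route as the paper: induction on the length of a monomial in $\la S,G\ra$, peeling off the first letter and using the defining relations of $\idl{G}$ according to whether it lies in $S$ or in $G$, then invoking the inductive hypothesis on the shorter tail. Your version merely spells out two details the paper leaves implicit (expanding the Sweedler components in the basis $G\o G$, and absorbing the left factor $u\in\X$ into the span and the ideal), so there is nothing to fix.
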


\begin{proof} Let $a_1 \ldots a_n\in\la S,G\ra$. 
We proceed by induction on $n$. If $n=1$ then either $a_1\in S$ or $a_1\in G$. Then either $ g a_1\in (g\sw1\cdot f_{a_1})g\sw2 +\idl{G}\subset \kspan\{u g\,|\, u\in\X, g\in G\} + \idl{G}$ or $ g a_1\in f_{g a_1}+\idl{G}\subset \kspan\{u g\,|\, u\in\X, g\in G\} + \idl{G}$. If $n>1$, then let us consider $g a_1a_2\ldots a_n$. Again either $a_1\in S$  or $a_1\in G$ and we argue for $ga_1$ as in the induction basis; then by using the induction hypothesis we achieve the desired form.
\end{proof}

\begin{prop}\label{IdentFreeAlg}
Assume the above situation. 
Then
$$
   \kX \# H \;\cong\; \k\la S,G\ra / (\idl{S}\!+\!\idl{G}) ,
$$
and for any ideal $I$ of $\k\la X\ra\# H$ also $\idl{S}\!+\! \idl{G}\!+\!I$ is an ideal of $\k\la S,G\ra$ such that
$$
 (\k\la X\ra\# H)/I  \; \cong \;\k\la S,G\ra / (\idl{S}\!+\! \idl{G}\!+\! I).
$$ 
Further we have the following special cases:
\begin{align}
H\cong \k:&  &  \kX & \; \cong \; \k\la S \ra / \idl{S},  &  \kX/I & \; \cong \; \k\la S\ra / (\idl{S}\!+\! I)       \label{IdentFreeAlgH=k}.\\
S=X:&        &  \kX\# H  & \; \cong \; \k\la X,\! G \ra /  \idl{G},       &   (\kX\# H)/I & \; \cong \; \k\la X,\! G\ra /(\idl{G}\!+\! I).  \label{IdentFreeAlgS=X}
\end{align}
\end{prop}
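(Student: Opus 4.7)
The plan is to construct an algebra homomorphism $\phi \colon \k\la S,G\ra \to \kX \# H$ via the universal property of the free algebra, defined on generators by $\phi(s) := f_s \in \kX$ for $s \in S$ and $\phi(g) := g$ for $g \in G$ (assuming implicitly $f_x = x$ for $x \in X \subset S$, so the restriction to $\kX$ is the identity). First I would verify that every generator of $\idl{S} + \idl{G}$ lies in $\ker\phi$: the element $s - f_s$ maps to $f_s - f_s = 0$; the element $gs - (g\sw{1}\cdot f_s)g\sw{2}$ maps to $0$ by the defining commutation rule of the smash product; $gh - f_{gh}$ vanishes because $f_{gh}$ is by construction the image of $gh \in H$ written in the basis $G$; and $1_H - 1$ maps to $0$ since $1_H$ is the unit of $\kX \# H$. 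This yields an induced algebra map $\bar\phi \colon \k\la S,G\ra / (\idl{S} + \idl{G}) \to \kX \# H$.

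Next I would show that $\bar\phi$ is bijective. Surjectivity is immediate, since every element of $\kX \# H$ is a $\k$-linear combination of elements $u \otimes g$ with $u \in \X$, $g \in G$, and each such element equals $\phi(ug)$. For injectivity the essential step is the normal form
\[
V + \idl{S} + \idl{G} = \k\la S,G\ra, \qquad V := \kspan\{ug \mid u \in \X, g \in G\},
\]
after which injectivity follows because $\phi|_V \colon V \to \kX \# H$, $ug \mapsto u \otimes g$, sends a basis of $V$ bijectively to a basis of $\kX \# H$. I would establish the normal form by induction on the length of a word $w \in \la S,G\ra$: the cases of length $0$ and $1$ are handled by reducing $s \equiv f_s \bmod \idl{S}$ and $1 \equiv 1_H \bmod \idl{G}$ to land in $V$; for length $\ge 2$, write $w = w' \cdot a$, apply the induction hypothesis to get $w' \equiv \sum \lambda_i u_i g_i \bmod \idl{S} + \idl{G}$, and then push each $g_i$ past the final letter $a$ using the $gs$- or $gh$-relation of $\idl{G}$ (this is exactly the content of Lemma \ref{PropertyOfIdealsISandIG}). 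The resulting expression lies back in $V$ since $\kX$ is an $H$-module algebra, so $u_i(g\sw{1} \cdot f_a) \in \kX$.

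For the ideal version, given $I \subset \kX \# H$ viewed as a subset of $V \subset \k\la S,G\ra$, the preimage $\phi^{-1}(I)$ is an ideal, and a direct computation using the normal form yields $\phi^{-1}(I) = \idl{S} + \idl{G} + I$; hence this sum is an ideal of $\k\la S,G\ra$, and $\bar\phi$ descends to the stated isomorphism $\k\la S,G\ra / (\idl{S} + \idl{G} + I) \cong (\kX \# H)/I$. The two special cases then follow by inspection: for $H = \k$ one has $G = \{1_H\}$ and $\idl{G}$ reduces to $(1_H - 1)$ together with trivial relations, giving $\k\la S,G\ra/\idl{G} \cong \k\la S\ra$; for $S = X$ the assumption $f_x = x$ forces $\idl{S} = 0$. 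The main obstacle is the inductive normal-form reduction in the second paragraph; once this is in hand, all remaining steps are formal.
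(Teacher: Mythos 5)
Your proposal is correct and follows essentially the same route as the paper: the same generator assignment $s\mapsto f_s$, $g\mapsto g$, the same normal-form reduction to $\kspan\{ug\mid u\in\X,\ g\in G\}$ (which is exactly Lemma \ref{PropertyOfIdealsISandIG}), the basis-to-basis argument for bijectivity, and the identification of $\idl{S}+\idl{G}+I$ with the preimage of $I$ to handle the quotient case. Your explicit remark that $f_x=x$ for $x\in X$ is needed for $\idl{S}=0$ in the case $S=X$ is a point the paper leaves implicit.
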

\begin{proof}
(1) The algebra map 
$$\k\la S,G\ra \rightarrow  \kX  \# H , \quad
 s\mapsto f_s\# 1_H,\quad g\mapsto 1_{\kX}\# g$$ is surjective and contains $\idl{S}+\idl{G}$ in its kernel; this is a direct calculation using the definitions. Hence we have a surjective algebra map on the quotient
\begin{align}\label{IsoIdentFreeAlg}
\k\la S,G\ra / (\idl{S}\!+\!\idl{G}) \longrightarrow \kX \# H.
\end{align}
In order to see that this map is bijective, we verify that a basis is mapped to a basis.

(a) The residue classes of the elements of  $\{u g\,|\, u\in\X, g\in G\}$  $\k$-generate $\k\la S,G\ra / (\idl{S}+\idl{G})$: Let $A\in \la S,G\ra$. Then either $A\in\la S\ra$ or it contains an element of $G$. In the first case $A\in \kX +\idl{S}$ by definition of $\idl{S}$, and then $A\in \kX 1_H+\idl{S}+\idl{G}$ since $1_H-1\in \idl\G$. In the other case let $A=A_1 g A_2$ with $A_1\in\la S\ra$, $g\in G$, $A_2\in\la S,G\ra$. We argue for $A_1$ like before, and  $g A_2\in \kspan\{u g\,|\, u\in\X, g\in G\} + \idl{G}$ by Lemma \ref{PropertyOfIdealsISandIG}. 

(b)  The residue classes of $\{u g\,|\, u\in\X, g\in G\}$ are mapped by Eq.~\eqref{IsoIdentFreeAlg} to the $\k$-basis $\X \# G$ of the right-hand side. Hence the residue classes are linearly independent, thus form a basis of $\k\la S,G\ra / (\idl{S}\!+\!\idl G)$.

(2) $\idl{S}+\idl\G+I$ is an ideal: Let $A\in\la S,G\ra$ and $a\in I\subset \kspan\{u g\,|\, u\in\X, g\in G\}$. Then by (1a) above $A\in \kspan\{u g\,|\, u\in\X, g\in G\} +\idl{S}+\idl{G}$, and since $I$ is an ideal of  $\kX \# H$,  we have $Aa,aA\in \idl{S}+\idl{G}+I$ by the isomorphism Eq.~\eqref{IsoIdentFreeAlg}.

Using the isomorphism theorem and part (1) we get
\begin{multline*}
 \k\la S,G\ra  / (\idl{S}\!+\!\idl{G}\!+\!I) 
    \cong\, 
    \bigl(\k\la S,G\ra /(\idl{S}\!+\!\idl{G})\bigr) \big/ \bigl((\idl{S}\!+\! \idl{G}\!+\!I)/(\idl{S}\!+\!\idl{G})\bigr) \,
    \cong\,
    (k\la X\ra\# H) / I,
\end{multline*}
where the last $\cong$ holds since $(\idl{S}\!+\! \idl{G}\!+\!I)/(\idl{S}\!+\!\idl{G})$ is mapped to $I$ by the isomorphism Eq.~\eqref{IsoIdentFreeAlg}.

(3) The special cases follow from the facts that $\idl{S}=0$ if $S=X$, and if $H\cong \k$ then  
$G=\{1_H\}$. Hence $\idl{G}=(1_H-1)$ and $\kX\cong \kX\#\k\cong \k\la S,\{1_H\}\ra/(\idl{S}+(1_H-1))\cong \k\la S\ra/\idl{S}$.  
\end{proof}

 We now return to the situation of Section \ref{SectCharHA},  and rewrite Proposition \ref{IdentFreeAlg} for the case $S=X_L$ and $H=\k[\G]$:

\begin{cor}\label{CorIdentFreeAlg} Let $L\subset\Ly$ be Shirshov closed and 
\begin{align*}
 \idl L&:= \bigl(\fvsl{u} -[\fvsl v,\fvsl w]\;|\; u\in L,\, \Sh{u}{v}{w}\bigr)_{\k\la X_L,\G\ra}\\
 \idl\G'&:=\bigl(g\fvsl{u}- \chi_u(g) \fvsl u  g,\;  g h-f_{gh},\; 1_\G-1\;|\; g,h\in\G,\,u\in L \bigr)_{\k\la X_L,\G\ra}.
\end{align*}
Then for any ideal $I$ of $\k\la X\ra\# \k[\G]$ also $\idl{L}\!+\! \idl\G'\!+\! I$ is an ideal of $\k\la X_L,\G\ra$ such that
$$
 (\k\la X\ra\# \k[\G])/I  \; \cong \;\k\la X_L,\G\ra / (\idl{L}\!+\! \idl\G'\!+\! I).
$$ 
Further we have the analog special cases of Proposition \ref{IdentFreeAlg}.
\end{cor}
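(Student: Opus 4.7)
The plan is to apply Proposition \ref{IdentFreeAlg} with $S := X_L$ and $H := \k[\G]$ (so $G := \G$), choosing $f_{\fvsl u} := [u]\in\kX$ for every $u\in L$; this choice is consistent because $[u]=u=\fvsl u$ whenever $u\in X$. The proposition then produces
\[
(\kX \# \k[\G])/I \;\cong\; \k\la X_L,\G\ra/(\idl{S}+\idl{G}+I),
\]
where $\idl{S}$ and $\idl{G}$ are as in Definition \ref{defnIdeals}. What remains is to identify these with the ideals in the corollary, i.e.\ to prove the equality $\idl{L}+\idl\G' = \idl{S}+\idl{G}$ of ideals in $\k\la X_L,\G\ra$.

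For the first factor I would show $\idl{L}=\idl{S}$ by induction on $\ell(u)$, proving $\fvsl u - [u]\in \idl{L}$ for every $u\in L$. The base case $u\in X$ is trivial. For $u\in L\setminus X$ with Shirshov decomposition $\Sh{u}{v}{w}$, Shirshov closedness gives $v,w\in L$, and the inductive hypothesis yields $\fvsl v-[v],\fvsl w-[w]\in \idl{L}$. Bilinearity of the $q$-commutator then gives
\[
[\fvsl v,\fvsl w]-[u] \;=\; (\fvsl v-[v])\fvsl w + [v](\fvsl w-[w]) - q_{v,w}\bigl((\fvsl w-[w])\fvsl v + [w](\fvsl v-[v])\bigr) \;\in\; \idl{L},
\]
so combining with the defining generator $\fvsl u - [\fvsl v,\fvsl w]\in \idl{L}$ yields $\fvsl u-[u]\in \idl{L}$, hence $\idl{S}\subset \idl{L}$. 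The reverse inclusion is symmetric: using the recursive definition $[u]=[[v],[w]]$ in $\kX$ from Eq.~\eqref{DefnSuperLett} and the generators $\fvsl v-[v],\fvsl w-[w]\in \idl{S}$, the same calculation places $\fvsl u-[\fvsl v,\fvsl w]$ in $\idl{S}$. For the smash-product factor, each super letter $[u]\in\kX$ is $\Gh$-homogeneous of degree $\chi_u$, so $g\cdot[u]=\chi_u(g)[u]$; therefore the generators of $\idl{G}$ take the form $g\fvsl u - \chi_u(g)[u]g$, which differ from the corresponding generators $g\fvsl u - \chi_u(g)\fvsl u g$ of $\idl\G'$ only by $\chi_u(g)(\fvsl u-[u])g\in \idl{S}=\idl{L}$. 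Hence $\idl{L}+\idl\G'=\idl{S}+\idl{G}$, and the stated isomorphism follows. The analog special cases come directly from Proposition \ref{IdentFreeAlg} by taking $L=X$ (so $\idl{L}=0$) or $H\cong \k$.

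The only real obstacle is the mild bookkeeping connecting the ``combinatorial'' generators $\fvsl u - [\fvsl v,\fvsl w]$ of $\idl{L}$ to the ``concrete'' polynomial generators $\fvsl u-[u]$ of $\idl{S}$; the induction on $\ell(u)$ absorbs this, and no new ideas beyond Proposition \ref{IdentFreeAlg} and the recursive structure of super letters are required.
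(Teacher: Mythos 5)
Your proposal is correct and follows essentially the same route as the paper: apply Proposition \ref{IdentFreeAlg} with $S=X_L$, $H=\k[\G]$, $f_{\fvsl u}=[u]$, and then identify $\idl{L}+\idl\G'$ with $\idl{X_L}+\idl{\G}$ via an induction on $\ell(u)$ using the bilinearity of the $q$-commutator (this is exactly the content of the paper's Lemma \ref{PropertyOfIdealIS}). You are in fact slightly more explicit than the paper about the reverse inclusion $\idl{L}\subset\idl{X_L}$ and about why the group-algebra generators of $\idl{\G}$ and $\idl\G'$ differ only by elements of $\idl{L}$, but no new ideas are involved.
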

\begin{proof} We apply Proposition \ref{IdentFreeAlg} to the case $S=X_L$, $H=\k[\G]$, $f_{x_u}=[u]$ for all $u\in L$. Then $\idl{X_L}=\bigl(\fvsl{u} -[u] \;|\; u\in L\bigr)_{\k\la X_L,\G\ra}$ and $\idl{\G}$ is as in Example    \ref{ExIdentFreeAlgH=kG}.
We are left to prove $\idl{L}\!+\! \idl\G'\!+\! I=\idl{X_L}\!+\! \idl\G\!+\! I$, which follows from the Lemma below.
\end{proof}

\begin{lem}\label{PropertyOfIdealIS} We have 
\begin{enumerate}
\item[\emph{(1)}] $[u] \in \fvsl{u} + \idl L$ for all $u\in L$; hence   $\idl{X_L}= \idl L$.
\item[\emph{(2)}] $
\idl\G \subset 
\idl\G'+\idl L
$
\end{enumerate}
\end{lem}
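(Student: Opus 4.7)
For part (1), the plan is a straightforward induction on the length $\ell(u)$. In the base case $u=x_i\in X\subset L$, we have $[u]=x_i=\fvsl u$ by the definition of the super letter $[\,.\,]$ on letters combined with the embedding $X\subset X_L$, so the difference is $0\in\idl L$. For the inductive step, take $u\in L\setminus X$ with Shirshov decomposition $\Sh{u}{v}{w}$; since $L$ is Shirshov closed, $v,w\in L$, and by induction we may write $[v]=\fvsl v+a_v$, $[w]=\fvsl w+a_w$ with $a_v,a_w\in\idl L$. Unfolding the recursive definition $[u]=\bigl[[v],[w]\bigr]=[v][w]-q_{v,w}[w][v]$ and expanding gives
\[
 [u]=\fvsl v\fvsl w-q_{v,w}\fvsl w\fvsl v \;+\;\bigl(a_v[w]+\fvsl v a_w-q_{v,w}a_w[v]-q_{v,w}\fvsl w a_v\bigr) \;=\;[\fvsl v,\fvsl w]+r,
\]
with $r\in\idl L$ since $\idl L$ is an ideal. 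Because $\fvsl u-[\fvsl v,\fvsl w]$ is by definition a generator of $\idl L$, we conclude $[u]\in\fvsl u+\idl L$. This immediately gives $\idl{X_L}\subset\idl L$ because $\idl{X_L}=(\fvsl u-[u]\mid u\in L)$ and each generator lies in $\idl L$. For the reverse inclusion, the same identity $[u]-[\fvsl v,\fvsl w]=[v][w]-q_{v,w}[w][v]-\fvsl v\fvsl w+q_{v,w}\fvsl w\fvsl v$ exhibits this element as a sum of terms each containing a factor of the form $[y]-\fvsl y$ ($y\in\{v,w\}$), hence lies in $\idl{X_L}$; adding $\fvsl u-[u]\in\idl{X_L}$ yields $\fvsl u-[\fvsl v,\fvsl w]\in\idl{X_L}$, giving $\idl L\subset\idl{X_L}$.

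For part (2), recall that $\idl\G$ is generated (cf.~Example \ref{ExIdentFreeAlgH=kG} with $S=X_L$ and $f_{\fvsl u}=[u]$) by $gh-f_{gh}$, $1_\G-1$, and $g\fvsl u-(g\cdot[u])g$ for $g,h\in\G$ and $u\in L$. The first two families coincide with generators of $\idl\G'$. For the third family, the key point is that each super letter $[u]$ is $\Gh$-homogeneous of degree $\chi_u$, so $g\cdot[u]=\chi_u(g)[u]$. Substituting the conclusion of (1), namely $[u]=\fvsl u+a_u$ with $a_u\in\idl L$, yields
\[
 g\fvsl u-(g\cdot[u])g \;=\; g\fvsl u-\chi_u(g)\fvsl u g\;-\;\chi_u(g)\,a_u g.
\]
The first difference is a generator of $\idl\G'$ and the remaining term lies in $\idl L$ because $\idl L$ is an ideal. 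Hence every generator of $\idl\G$ lies in $\idl\G'+\idl L$, proving the inclusion.

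The only mildly delicate point is the compatibility of the two $q$-commutator conventions: in the free algebra $\kX$ the bracket $\bigl[[v],[w]\bigr]$ uses the bilinear $q$-commutator from Example~\ref{qCommutEx}, while in $\k\la X_L,\G\ra$ the bracket $[\fvsl v,\fvsl w]$ is defined in Section~\ref{SectPBWCrit} with scalar $q_{v,w}=\chi_w(g_v)$. These scalars match because both $[v]$ and the fresh variable $\fvsl v$ are tagged with the same $(\G,\Gh)$-bidegree $(g_v,\chi_v)$ by construction, so the algebraic manipulation in the inductive step above is consistent. No other obstacle arises, since the rest of the argument only uses that $\idl L$ is a two-sided ideal and that $\k[\G]$ is cocommutative (so $g\sw 1\cdot f_s\otimes g\sw 2=(g\cdot f_s)\otimes g$).
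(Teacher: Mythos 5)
Your proof is correct and follows essentially the same route as the paper: induction on $\ell(u)$ via the Shirshov decomposition for (1), with (2) deduced from (1); you simply make explicit the two steps the paper leaves implicit (the reverse inclusion $\idl L\subset\idl{X_L}$ and the rewriting of the generator $g\fvsl u-(g\cdot[u])g$). One small quibble in your closing remark: the identity $g\sw1\cdot f_s\o g\sw2=(g\cdot f_s)\o g$ holds because each $g\in\G$ is group-like ($\Delta(g)=g\o g$), not because $\k[\G]$ is cocommutative.
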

\begin{proof} (2) follows from (1), which we prove by induction on $\ell(u)$: For  $\ell(u)=1$ there is nothing to show. 
Let $\ell(u)>1$ and $\Sh{u}{v}{w}$.  Then by the induction assumption we have
\begin{align*}
[u]
 &=   [v][w]-  q_{v,w}  [w][v]
 \in (\fvsl{v} + \idl L)( \fvsl{w} +\idl L) -  q_{vw}  (\fvsl{w}+ \idl L)(\fvsl{v} +\idl L)\\
 &\subset [\fvsl v,\fvsl w] +\idl L = \fvsl{u}-(\underbrace{\fvsl{u} -[\fvsl v,\fvsl w]}_{\in \idl L}) + \idl L = \fvsl{u} + \idl L.
\end{align*}
\end{proof}



\begin{ex}\label{ExPreNicholsAlgA2}
Let $X=\{x_1,x_2\}\subset L=\{x_1,x_1x_2,x_2\}$. 
Then $\idl{L} = \bigl(\fvsl{12}-[\fvsl{1},\fvsl{2}]\bigr)$ and by Corollary \ref{CorIdentFreeAlg}
$
\k\la x_1,x_2\ra    \;\cong\; \k \bigl\la \fvsl 1,\fvsl{12},\fvsl 2 \;\big|\; \fvsl{12}=[\fvsl 1,\fvsl 2] \bigr\ra,
$ and
\begin{align*}
\k\la x_1,x_2\ra \# \k[\G]   \;\cong\;  \k\la \fvsl{1},\fvsl{12},\fvsl{2},\G\;|\;      
       \fvsl{12}&=[\fvsl{1},\fvsl{2}],\\
       g\fvsl{u}&= \chi_u(g)\fvsl u  g ,\,
      g h=f_{gh},\, 1_\G-1 ;
       \forall  u\in L, g,h\in\G \ra.
\end{align*}
\end{ex}


\section{Bergman's diamond lemma} 
\label{SectDL}
Following Bergman \cite{BergmanDL}, let $Y$ be a set, $\k\la Y\ra$ the free $\k$-algebra and $\Sigma$ an index set.
We fix a subset $\mathcal R=\{(W_{\sigma},f_{\sigma})\,|\,\sigma\in\Sigma\} \subset \la Y\ra\times \k\la Y\ra$, and define the ideal
$$
\idl{\mathcal R}:=(W_{\sigma}-f_{\sigma}\,|\,\sigma\in\Sigma)_{\k\la Y\ra}.
$$


An \emph{overlap of $\mathcal R$} is a triple $(A,B,C)$ such that there are $\sigma,\tau\in \Sigma$ and $A,B,C\in \la Y\ra\backslash\{1\}$ with $W_{\sigma}=AB$ and $W_{\tau}=BC$. In the same way an \emph{inclusion of $\mathcal R$} is a triple $(A,B,C)$ such that there are $\sigma\neq\tau\in \Sigma$ and $A,B,C\in \la Y\ra$ with $W_{\sigma}=B$ and $W_{\tau}=ABC$.

Let $\orddleq$ be a \emph{with $\mathcal R$ compatible well-founded monoid partial ordering} of the free monoid $\la Y\ra$, i.e.: 
\begin{itemize}
 \item $(\la Y\ra,\orddleq)$ is a partial ordered set.
\item  $B\orddl B'\Rightarrow ABC\orddl AB'C$ for all $A,B,B',C\in \la Y\ra$.
\item  Each non-empty subset of $\la Y\ra$ has a minimal element w.r.t. $\preceq_{\diamond}$.
\item $f_{\sigma}$ is a linear combination of monomials $\prec_{\diamond}W_{\sigma}$ for all $\sigma\in \Sigma$; in 
 this case we write $f_{\sigma}\prec_{\diamond}W_{\sigma}$.
\end{itemize}



For any $A\in \la Y\ra$ let $\idlDL{A}$ denote the subspace of $\k\la Y\ra$ spanned by all elements $B(W_{\sigma}-f_{\sigma})C$ with $B,C\in \la Y\ra$ such that $BW_{\sigma}C\orddl A$. The next theorem is a short version of the diamond lemma:

\begin{thm}\emph{\cite[Thm 1.2]{BergmanDL}}\label{ThmDL} Let $\mathcal R=\{(W_{\sigma},f_{\sigma})\,|\,\sigma\in\Sigma\}\subset \la Y\ra\times \k\la Y\ra$ and  $\preceq_{\diamond}$ be a with $\mathcal R$ compatible  well-founded monoid partial ordering on $\la Y\ra$. 
Then the following conditions are equivalent:
\begin{enumerate}
\item[\emph{(1)}] 
  \begin{enumerate}
    \item[\emph{(a)}] $f_{\sigma}C-Af_{\tau}\in \idlDL{ABC}$ for all overlaps $(A,B,C)$.
    \item[\emph{(b)}] $Af_{\sigma}C-f_{\tau}\in \idlDL{ABC}$ for all inclusions $(A,B,C)$.
  \end{enumerate}
\item[\emph{(2)}] The residue classes of the elements of $\la Y\ra$ which do not contain any $W_{\sigma}$  with $\sigma\in\Sigma$ as a subword form a $\k$-basis of $\k\la Y\ra / I_{\mathcal R}$.
\end{enumerate}
\end{thm}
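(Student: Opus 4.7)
The plan is to develop a reduction theory on $\k\la Y\ra$ compatible with $\mathcal R$ and conclude via a Newman-style confluence argument. For each $\sigma \in \Sigma$ and each pair $B, C \in \la Y\ra$, let $r_{B,\sigma,C} \colon \k\la Y\ra \to \k\la Y\ra$ be the $\k$-linear endomorphism sending the basis monomial $BW_\sigma C$ to $Bf_\sigma C$ and fixing all other basis monomials. Call a monomial \emph{irreducible} if it contains no $W_\sigma$ as a subword, and let $\mathrm{Irr}$ denote the $\k$-subspace spanned by irreducible monomials. Since $f_\sigma \orddl W_\sigma$, each reduction step replaces a monomial by a linear combination of strictly smaller ones, so by well-foundedness of $\orddleq$ any iterated reduction terminates at an element of $\mathrm{Irr}$.

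Direction $(2) \Rightarrow (1)$ is the easier half. For an overlap $W_\sigma = AB$, $W_\tau = BC$, the element $x := f_\sigma C - A f_\tau$ lies in $I_{\mathcal R}$ since both $f_\sigma C$ and $Af_\tau$ are congruent to $ABC$, and $x$ is supported on monomials $\orddl ABC$ by compatibility. Reducing $x$ to an irreducible $\bar x$ yields $\bar x \in I_{\mathcal R} \cap \mathrm{Irr}$, hence $\bar x = 0$ by (2). The reduction sequence $x = x_0 \to x_1 \to \cdots \to x_n = 0$ traverses only elements supported on monomials $\orddl ABC$, so it witnesses $x \in \idlDL{ABC}$. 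The inclusion case is identical.

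Direction $(1) \Rightarrow (2)$ is the substance. Spanning of $\k\la Y\ra/I_{\mathcal R}$ by irreducibles follows at once from the termination observation above. For linear independence one constructs a $\k$-linear projection $\pi \colon \k\la Y\ra \to \mathrm{Irr}$ that kills $I_{\mathcal R}$, forcing the composition $\mathrm{Irr} \hookrightarrow \k\la Y\ra \twoheadrightarrow \k\la Y\ra/I_{\mathcal R}$ to be an isomorphism. Well-definedness of $\pi$ requires \emph{confluence}: any two complete reduction sequences applied to the same element produce the same irreducible output. By Newman's lemma, for a well-founded rewriting system confluence reduces to \emph{local confluence}, i.e., if a single monomial admits two one-step reductions, their outputs have a common further reduction.

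The main obstacle is this local confluence check, interlocked with a well-founded induction on $\orddl$. Three cases arise by the relative position of two reducible subwords $B_1 W_{\sigma_1} C_1$ and $B_2 W_{\sigma_2} C_2$ inside a shared monomial $D$: (i) \emph{disjoint}, where the two reductions commute on the nose; (ii) \emph{overlap}, where hypothesis (1a) expresses the discrepancy as an element of $\idlDL{D'}$ for some $D' \orddleq D$, which by the inductive hypothesis admits a further common reduction; (iii) \emph{inclusion}, handled analogously via (1b). The careful book-keeping of $\orddl$ in the tails produced by substituting $f_{\sigma_i}$ is the technical heart, and it assembles into a well-defined $\pi$, completing the proof.
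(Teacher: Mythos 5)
The paper does not actually prove this theorem: it is quoted from Bergman's article as a known result, so there is no in-paper argument to compare against. Your sketch reproduces the standard (essentially Bergman's original) proof --- reduction endomorphisms, termination from the compatible well-founded ordering, the easy direction $(2)\Rightarrow(1)$ by reducing the discrepancy of an ambiguity to an irreducible element of $I_{\mathcal R}$ and invoking linear independence, and the hard direction via resolution of overlap and inclusion ambiguities --- and this structure is sound. Two points are glossed over. First, termination of \emph{arbitrary} reduction sequences on linear combinations (not just the existence of one terminating sequence per monomial) needs a small argument, e.g.\ the multiset extension of $\orddl$, since a single step can enlarge the support. Second, and more substantively, the appeal to Newman's lemma is not literal: the rewriting acts on a vector space rather than on a set of syntactic objects, so ``local confluence at each monomial implies confluence'' does not follow from the abstract lemma as stated. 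Bergman instead proves reduction-uniqueness by well-founded induction on $\orddl$, using the lemma that the reduction-unique elements form a subspace on which the normal-form map is linear; this is exactly what lets you close case (ii), because hypothesis (1a) only tells you the discrepancy lies in $\idlDL{ABC}$, i.e.\ is a combination of elements $D(W_{\rho}-f_{\rho})E$ with $DW_{\rho}E\orddl ABC$, and you need linearity of the normal form on the span of strictly smaller monomials to conclude that this combination has normal form $0$. Your phrase about the confluence check being ``interlocked with a well-founded induction on $\orddl$'' points at the right mechanism, but the subspace/linearity lemma is the missing ingredient that should be stated and proved for the argument to be complete.
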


We now define the ordering for our situation, where $L\subset\Ly$ is Shirshov closed and $Y=X_L\cup \G$: 
Let $\pi_L:\la X_L, \G\ra \rightarrow \Lv$ be the monoid map with $\fvsl u\mapsto\fvsl u$ and $g\mapsto 1$ for all $u\in L$, $g\in\G$ ($\pi_L$ deletes all $g$ in a word of $\la X_L, \G\ra$).


Moreover, for a $A\in\la X_L, \G\ra$ let $n_{\G}(A)$ denote the number of letters $g\in\G$ in the word $A$ and $t(A)$ the $n_{\G}(A)$-tuple of non-negative integers
\begin{multline*}
(\mbox{number of letters after the last $g\in\G$ in $A$},\ldots,\\
\ldots,\mbox{number 
of letters after the first $g\in\G$ in $A$})\in\N^{n_{\G}(A)}.
\end{multline*}






\begin{defn}\label{DefnOrdDL}  For $A,B\in\la X_L, \G\ra$ we define $A\prec_{\diamond} B$ by
\begin{itemize}
\item $\pi_L(A)\prec \pi_L(B)$, or 
\item $\pi_L(A)=\pi_L(B)$ and $n_{\G}(A)< n_{\G}(B)$, or
\item $\pi_L(A)=\pi_L(B)$, $n_{\G}(A)= n_{\G}(B)$ and $t(A)<t(B)$ under the lexicographical 
order of $\N^{n_{\G}(A)}$, i.e., $t(A)\neq t(B)$, and the first non-zero term of $t(B)-t(A)$ is 
positive.
\end{itemize}
\end{defn}

$\preceq_{\diamond}$ is a well-founded monoid partial ordering of $\la X_L, \G\ra$, which is straightforward to verify, and will be compatible with the later regarded $\mathcal R$.

 
Note that we have the following correspondence between $\prec$ of Section \ref{SectWellFoundOrder} and $\orddl$, which follows from the definitions: For any $U,V\in\SupWL$, $g,h\in\G$ we have $\rho(U)g, \rho(V)h\in \la X_L\ra \G$ and
\begin{align}\label{VergleichPrecVS.PrecDiamLem}
U\prec V  \; \Longleftrightarrow\; \rho(U)g\orddl \rho(V)h.
\end{align}

\section{Proof of Theorem \ref{ThmPBWCrit}}\label{SectProofPBWCrit}
Again suppose the assumptions of Theorem \ref{ThmPBWCrit}. By Corollary \ref{CorIdentFreeAlg}
$$
(\k\X\# \k[\G])/I\cong \k\la X_L, \G\ra/ (\idl{L}+\idl\G'+ I),
$$ 
thus $(\k\X\# \k[\G])/I$ has the basis 
$
[u_1]^{r_1}[u_2]^{r_2}\ldots [u_t]^{r_t}g
$
if and only if $\k\la X_L, \G\ra/ (\idl{L}+\idl\G'+ I)$ has  the basis 
$
x_{u_1}^{r_1}x_{u_2}^{r_2}\ldots x_{u_t}^{r_t}g
$
($t\in\mathbb N$, $u_i\in L$, $u_1>\ldots>u_t$, $0< r_i<N_u$, $g\in\G$).
The latter we can reformulate equivalently in terms of the Diamond Lemma \ref{ThmDL}: 

\noindent \textbullet \ We define $\mathcal R$ as the set of the elements 
\begin{align}
                                 (1_\G,\; 1            ),\label{Red1G} & \\
  ( g h  ,\; f_{gh}   ),\label{RedG} & \text{ for all } g, h\in \G ,         \\
    \bigl( g \fvsl u,\; \chi_u(g)\fvsl u g \bigr),&\text{ for all }  g\in\G, u\in L  ,   \label{RedGu}\\
         \bigl(\fvsl u\fvsl v,\;\redbrv{u}{v}+q_{u,v}\fvsl v\fvsl u \bigl), &\text{ for all } u,v\in L\text{ with }u<v ,\label{Reduv}\\
   \bigl(\fvsl{u}^{N_u},\; \redhv{u}\bigr),&\text{ for all } u\in L\text{ with }N_u<\infty \label{Reduh},
\end{align}
where we again see $\redbrv{u}{v}$, $\redhv{u}\in \kLv\o \k[\G]\subset 
\kspan \{Ug\ |\ U\in\Lv,g\in\G \}\subset \k\la X_L, \G\ra$. 
Then the residue classes of $\redbrv{u}{v},\redhv{u}$ 
modulo $\idl L+\idl\G'$ correspond to $\redbr{u}{v}$ and $\redh{u}$ by the isomorphism of Corollary \ref{CorIdentFreeAlg}, and we have $I_{\mathcal R}=\idl{L}\!+\!\idl\G'\!+\! I$.

\noindent \textbullet \ Note that $\prec_{\diamond}$ is compatible with $\mathcal R$: In Eq.~\eqref{Red1G} resp.~\eqref{RedG} we have $1\orddl 1_\G$ resp.~$f_{gh} \orddl  g h$ since $n_{\G}(1)=0<1=n_{\G}(1_\G)$ resp.~$n_{\G}(f_{gh})=1<2=n_{\G}( g h)$ ($f_{gh}\in\G$). Eq.~\eqref{RedGu}: $t(\fvsl u g)=(0)<(1)=t( g\fvsl u)$, hence $\fvsl u g \orddl  g\fvsl u$. Moreover, by \cite[Lem.~3.6]{Helbig-Presentation} we have $\redbrv{u}{v}+q_{u,v}\fvsl v\fvsl u\orddl \fvsl u\fvsl v$, 
and $\redhv{u}\orddl \fvsl{u}^{N_u}$ by assumption.

\noindent \textbullet \ By the Diamond Lemma \ref{ThmDL} we have to consider all possible overlaps and inclusions of $\mathcal R$. The only inclusions happen with Eq.~\eqref{Red1G}, namely $(1,1_\G,h)$, $(g,1_\G,1)$, $(1,1_\G,\fvsl{u})$. But they all fulfill the condition (1b) of the Diamond Lemma \ref{ThmDL}: for example $ h-f_{1_\G h}=h-h=0\in \idlDL{1_\G h}$, and $\fvsl u - \chi_u(1_\G)\fvsl u 1_\G=\fvsl u(1_\G- 1)\in \idlDL{1_\G \fvsl{u}}$. 

So we are left to check the conditon (1a) for all overlaps: $( g, h,k)$ with $g,h,k\in\G$ fulfills it by the associativity of $\G$; for $( g, h,\fvsl u)$ we have
\begin{align*}
& f_{gh} \fvsl u - \chi_u(h) g \fvsl u  h = \chi_{u}(gh)\fvsl u f_{gh}-\chi_u(h)\chi_u(g) \fvsl u g  h=0,
\end{align*}
calculating modulo $\idlDL{ g h\fvsl u}$ and using $\chi_u(f_{gh})=\chi_{u}(gh)$ since $f_{gh}\in\G$.
The next overlap is $(g,\fvsl u,\fvsl v)$ where $u<v$: Calculating modulo $\idlDL{ g\fvsl u\fvsl v}$ we get
\begin{multline*}
\chi_u(g)\fvsl u g \fvsl v -  g \bigl(\redbrv{u}{v} +q_{u,v}\fvsl v\fvsl u\bigr)
=\chi_u(g)\chi_v(g)\fvsl u\fvsl v g -\\ \chi_{uv}(g)\bigl(\redbrv{u}{v} +q_{u,v}\fvsl v\fvsl u\bigr) g
=\chi_{uv}(g)\bigl(\fvsl u\fvsl v - \bigl(\redbrv{u}{v} +q_{u,v}\fvsl v\fvsl u\bigr)\bigr) g=0,
\end{multline*}
since $\redbr{u}{v}\in (\kX\# \k[\G])^{\chi_{uv}}$ and $\fvsl u\fvsl v g\orddl  g\fvsl u\fvsl v$. For the overlap $( g,\fvsl u,\fvsl{u}^{N_u-1})$ we obtain modulo $\idlDL{ g\fvsl{u}^{N_u}}$
\begin{align*}
\chi_u(g)\fvsl u g \fvsl{u}^{N_u-1} -  g \redhv{u} = 
\chi_u(g)^{N_u}\bigl(\fvsl{u}^{N_u}-\redhv{u}\bigr) g=0,
\end{align*}
because $\redh{u} \in (\kX\# \k[\G])^{\chi_u^{N_u}}$ and $\fvsl{u}^{N_u}\fv g\orddl \fv g\fvsl{u}^{N_u}$.
The remaining overlaps are those with Eqs.~\eqref{Reduv} and \eqref{Reduh}; for these we formulate the following three Lemmata which are equivalent to (2) of the Theorem \ref{ThmPBWCrit}:
\begin{lem}
The overlap $(\fvsl u,\fvsl v,\fvsl w)$, $u<v<w$,  fulfills condition \ref{ThmDL}(1a), i.e.,
$
a:=\bigl(\redbrv{u}{v} + q_{u,v} \fvsl v\fvsl u \bigr)\fvsl w - \fvsl u\bigl(\redbrv{v}{w} +q_{v,w}\fvsl w\fvsl v\bigr) \in\idlDL{\fvsl u\fvsl v\fvsl w},
$
if and only if
$J(u<v<w)  \in \idlDL{\fvsl{u}\fvsl{v}\fvsl{w}}.
$
\end{lem}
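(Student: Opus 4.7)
The plan is to show $J(u<v<w) - a \in \idlDL{\fvsl u\fvsl v\fvsl w}$; since $\idlDL{\fvsl u\fvsl v\fvsl w}$ is a subspace this immediately yields the claimed equivalence. The strategy is to rewrite both $a$ and $J(u<v<w)$ in terms of the two generators $E:=[\fvsl u,\fvsl v]-\redbrv{u}{v}$ and $F:=[\fvsl v,\fvsl w]-\redbrv{v}{w}$ of $\idl{\mathcal R}$ coming from Eq.~\eqref{Reduv}, and then to verify that the difference consists of generator products whose underlying words are strictly $\orddl$-smaller than $\fvsl u\fvsl v\fvsl w$.

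First I would rewrite $a$. Using $\redbrv{u}{v}+q_{u,v}\fvsl v\fvsl u = \fvsl u\fvsl v - E$ (and the analogous identity for $F$), a direct expansion gives $a = -E\,\fvsl w + \fvsl u\,F$. Next, I would apply the $q$-Jacobi identity of Proposition \ref{PropqCommut}(2) to $(\fvsl u,\fvsl v,\fvsl w)$ with parameters $q'=q_{u,v}$, $q''=q_{u,w}$, $q=q_{v,w}$, so that $q''q = q_{uv,w}$ and $q'q''=q_{u,vw}$; this yields the vanishing identity
\[
 [[\fvsl u,\fvsl v],\fvsl w]_{q_{uv,w}} - [\fvsl u,[\fvsl v,\fvsl w]]_{q_{u,vw}} + q_{u,v}\fvsl v[\fvsl u,\fvsl w] - q_{v,w}[\fvsl u,\fvsl w]\fvsl v = 0 .
\]
Subtracting this from the definition of $J(u<v<w)$ collapses it to $J(u<v<w) = -[E,\fvsl w]_{q_{uv,w}} + [\fvsl u,F]_{q_{u,vw}}$, and expanding the two $q$-commutators gives $J(u<v<w) = -E\,\fvsl w + q_{uv,w}\,\fvsl w\, E + \fvsl u\, F - q_{u,vw}\, F\,\fvsl u$. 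Hence
\[
J(u<v<w) - a \;=\; q_{uv,w}\,\fvsl w\, E \;-\; q_{u,vw}\, F\,\fvsl u .
\]

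The last step is to check the $\orddl$-ordering. Each summand has the form $B(W_\sigma-f_\sigma)C$ for a relation of type \eqref{Reduv}: $\fvsl w E$ corresponds to $(B,W_\sigma,C) = (\fvsl w,\fvsl u\fvsl v,1)$, and $F\fvsl u$ corresponds to $(1,\fvsl v\fvsl w,\fvsl u)$. The associated words $\fvsl w\fvsl u\fvsl v$ and $\fvsl v\fvsl w\fvsl u$ both have length three and are lexicographically strictly greater than $\fvsl u\fvsl v\fvsl w$, since by $u<v<w$ the word $\fvsl u\fvsl v\fvsl w$ is the lex-minimum of its three cyclic rotations. By the reverse-lexicographic convention of $\prec$ from Section \ref{SectWellFoundOrder} and the correspondence \eqref{VergleichPrecVS.PrecDiamLem}, this gives $\fvsl w\fvsl u\fvsl v \orddl \fvsl u\fvsl v\fvsl w$ and $\fvsl v\fvsl w\fvsl u \orddl \fvsl u\fvsl v\fvsl w$, so $J(u<v<w)-a \in \idlDL{\fvsl u\fvsl v\fvsl w}$.

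I expect the main obstacle to be purely bookkeeping: tracking the signs and the $q$-coefficients when subtracting the $q$-Jacobi identity from the definition of $J$ and expanding the resulting commutators in terms of $E$ and $F$. Once that is done the ordering verification is immediate from $u<v<w$.
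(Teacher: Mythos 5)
Your proof is correct and takes essentially the same route as the paper: both compute $J(u<v<w)-a$ and identify it as a combination of the generators $[\fvsl u,\fvsl v]-\redbrv{u}{v}$ and $[\fvsl v,\fvsl w]-\redbrv{v}{w}$ framed by the words $\fvsl w\fvsl u\fvsl v$ and $\fvsl v\fvsl w\fvsl u$, which are $\orddl$-smaller than $\fvsl u\fvsl v\fvsl w$ since $u<v<w$. Your use of the $q$-Jacobi identity to obtain the exact identity $J-a=q_{uv,w}\,\fvsl w\,E-q_{u,vw}\,F\,\fvsl u$ is a slightly cleaner packaging of the paper's expand-and-cancel computation, but the content is identical.
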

\begin{proof} We calculate in $\k\la X_L,\G\ra$
\begin{align*}
J(u<v<w)   &= \redbrv{u}{v}\fvsl w-q_{uv,w}\fvsl w\redbrv{u}{v}-  \bigl(\fvsl u\redbrv{v}{w}-q_{u,vw}\redbrv{v}{w}\fvsl u\bigr)\\
    &\quad + q_{u,v} \fvsl v\bigl(\fvsl u\fvsl w-q_{u,w}\fvsl w\fvsl u\bigr) - q_{v,w}\bigl(\fvsl u\fvsl w-q_{u,w}\fvsl w\fvsl u\bigr)\fvsl v,\\
a   &= \redbrv{u}{v}\fvsl w + q_{u,v}\fvsl v\fvsl u\fvsl w -\fvsl u\redbrv{v}{w} -q_{v,w}\fvsl u\fvsl w\fvsl v,
\end{align*}
and show that the difference is zero modulo $\idlDL{\fvsl{u}\fvsl{v}\fvsl{w}}$:
\begin{align*}
J(u<v<w)-a &= q_{uv,w} \fvsl w\bigl(\fvsl u\fvsl v-\redbrv{u}{v}\bigr) +q_{u,vw}\bigl(\redbrv{v}{w}-\fvsl v\fvsl w\bigr)\fvsl u \\
&=q_{uv,w} \fvsl w\bigl(q_{u,v}\fvsl v\fvsl u\bigr) - q_{u,vw}\bigl(q_{v,w}\fvsl w\fvsl v\bigr)\fvsl u=0.
\end{align*}
since $\fvsl w\fvsl u\fvsl v,\fvsl v\fvsl w\fvsl u\orddl\fvsl u\fvsl v\fvsl w$.
\end{proof}

\begin{lem}
The overlaps $\bigl(\fvsl{u}^{N_u-1},\fvsl{u},\fvsl{v}\bigr)$ resp.~$\bigl(\fvsl{u},\fvsl{v},\fvsl{v}^{N_v-1}\bigr)$  fulfill condition \ref{ThmDL}(1a), i.e.,
$
\redhv{u}\fvsl{v} -\fvsl{u}^{N_u-1}\bigl(\redbrv{u}{v} +q_{u,v}\fvsl{v}\fvsl{u}\bigr) \in\idlDL{\fvsl{u}^{N_u}\fvsl{v}}
$
resp.~
$
\bigl( \redbrv{u}{v} +q_{uv}\fvsl{v}\fvsl{u}\bigr)\fvsl{v}^{N_v-1}-\fvsl{u}\redhv{v} \in\idlDL{\fvsl{u}\fvsl{v}^{N_v}}
$
if and only if 
$L(u,u<v) \in \idlDL{\fvsl{u}^{N_u}\fvsl v}$ resp.~$L(u,u>v)\in \idlDL{\fvsl v\fvsl{u}^{N_u}}$.
\end{lem}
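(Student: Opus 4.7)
The plan is to prove $a_1 \equiv -L(u,u<v) \pmod{\idlDL{\fvsl u^{N_u}\fvsl v}}$ (with an analogous congruence for the second overlap), so that one side lies in $\idlDL{\fvsl u^{N_u}\fvsl v}$ exactly when the other does. The two essential ingredients are the restricted $q$-Leibniz formula of Proposition~\ref{PropqCommut}(4) and the two rewrite rules of $\mathcal R$ that collide at the critical word $\fvsl u^{N_u}\fvsl v$.

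First I would isolate the two critical summands of $a_1$ by adding and subtracting $\fvsl u^{N_u}\fvsl v$:
\begin{align*}
a_1 \;=\; -(\fvsl u^{N_u} - \redhv{u})\fvsl v \;+\; \fvsl u^{N_u-1}\bigl([\fvsl u,\fvsl v]_{q_{u,v}} - \redbrv{u}{v}\bigr).
\end{align*}
Each summand is of the form $B(W_\sigma - f_\sigma)C$ with $BW_\sigma C = \fvsl u^{N_u}\fvsl v$, so neither lies strictly below in $\orddl$; the trick is to reduce each by invoking the \emph{other} relation via the restricted $q$-Leibniz formula.

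For the first summand, applying the restricted $q$-Leibniz identity gives $\fvsl u^{N_u}\fvsl v = q_{u,v}^{N_u}\fvsl v\fvsl u^{N_u} + B_{N_u}$, where $B_{N_u}$ denotes the iterated commutator $\bigl[\fvsl u,\ldots,[\fvsl u,[\fvsl u,\fvsl v]_{q_{u,v}}]_{q_{u,v}q_{u,u}},\ldots\bigr]_{q_{u,v}q_{u,u}^{N_u-1}}$. Since $\fvsl v\fvsl u^{N_u}\orddl\fvsl u^{N_u}\fvsl v$ (same length, lex-larger as $\fvsl u < \fvsl v$), I may further replace $\fvsl v\fvsl u^{N_u}$ by $\fvsl v\redhv{u}$, yielding $-(\fvsl u^{N_u}-\redhv{u})\fvsl v \equiv [\redhv{u},\fvsl v]_{q_{u,v}^{N_u}} - B_{N_u}$ modulo $\idlDL{\fvsl u^{N_u}\fvsl v}$. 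For the second summand, I would substitute $\redbrv{u}{v}$ for the innermost bracket $[\fvsl u,\fvsl v]_{q_{u,v}}$ inside $B_{N_u}$; writing $B'_{N_u-1}$ for the resulting expression, which is precisely the first term of $L(u,u<v)$, the difference $B_{N_u} - B'_{N_u-1}$ expands as $\sum_{a+b=N_u-1} c_{a,b}\,\fvsl u^a\bigl([\fvsl u,\fvsl v]_{q_{u,v}} - \redbrv{u}{v}\bigr)\fvsl u^b$. Every term with $b\ge 1$ has $\fvsl u^{a+1}\fvsl v\fvsl u^b\orddl\fvsl u^{N_u}\fvsl v$, while the $b=0$ term carries coefficient $1$; hence $\fvsl u^{N_u-1}\bigl([\fvsl u,\fvsl v]_{q_{u,v}} - \redbrv{u}{v}\bigr) \equiv B_{N_u} - B'_{N_u-1}$. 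Summing the two congruences telescopes $B_{N_u}$ away and produces $a_1 \equiv [\redhv{u},\fvsl v]_{q_{u,v}^{N_u}} - B'_{N_u-1} = -L(u,u<v)$.

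The second overlap $(\fvsl u,\fvsl v,\fvsl v^{N_v-1})$ is handled by the mirror-image argument, using the other restricted $q$-Leibniz formula applied to $[\fvsl u,\fvsl v^{N_v}]_{q_{u,v}^{N_v}}$. The main technical obstacle in both cases is the careful bookkeeping of which substitution terms fall strictly $\orddl$-below the critical word and which remain at the top level: each of the two rewrite paths leaves behind exactly one non-reducible residual, and it is the restricted $q$-Leibniz identity that makes these residuals cancel against each other up to the two nested-commutator pieces that assemble $L(u,u<v)$.
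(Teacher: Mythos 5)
Your argument is correct, and it reaches the same congruence $a_1\equiv -L(u,u<v)$ modulo $\idlDL{\fvsl u^{N_u}\fvsl v}$ that the paper establishes (up to an overall sign), but it is organized differently. The paper expands $\fvsl{u}^{N_u-1}\bigl(\redbrv{u}{v}+q_{u,v}\fvsl v\fvsl u\bigr)-\redhv{u}\fvsl v$ directly via the \emph{unrestricted} $q$-Leibniz formula of Proposition \ref{PropqCommut}(3), obtaining two sums weighted by $q$-binomial coefficients $\tbinom{N_u-1}{i}_{q_{u,u}}$, reduces the innermost $[\fvsl u,\fvsl v]$ to $\redbrv{u}{v}$ and $\fvsl v\fvsl u^{N_u}$ to $\fvsl v\redhv{u}$ modulo $\idlDL{\fvsl u^{N_u}\fvsl v}$, and then kills all lower-order terms by an index shift, the $q$-Pascal identity \eqref{PascalDreieck}, and $\tbinom{N_u}{i}_{q_{u,u}}=0$. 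You instead split $a_1$ into the two ambiguity summands $-(\fvsl u^{N_u}-\redhv u)\fvsl v$ and $\fvsl u^{N_u-1}\bigl([\fvsl u,\fvsl v]_{q_{u,v}}-\redbrv{u}{v}\bigr)$, reduce each one against the \emph{other} relation using the restricted formula (4) as a black box, and let $B_{N_u}$ telescope. The effect is that the binomial cancellation is outsourced to the already-proven Proposition \ref{PropqCommut}(4) (which is itself derived from (3), \eqref{qBinomCoeffZero} and \eqref{PascalDreieck}), so your bookkeeping is lighter; the price is that you must justify the individual ordering facts $\fvsl v\fvsl u^{N_u}\orddl\fvsl u^{N_u}\fvsl v$ and $\fvsl u^{a+1}\fvsl v\fvsl u^{b}\orddl\fvsl u^{N_u}\fvsl v$ for $b\ge 1$, which you do correctly (same length, lexicographically larger since $u<v$, hence $\prec$-smaller). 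Both ingredients you identify as essential — the restricted $q$-Leibniz formula and the two colliding rewrite rules — are exactly the ones the paper uses, and your linearity-in-the-innermost-slot expansion of $B_{N_u}-B'_{N_u-1}$ with leading coefficient $1$ is sound.
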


\begin{proof} We prove it for $\bigl(\fvsl{u}^{N_u-1},\fvsl{u},\fvsl{v}\bigr)$; the other overlap is proved analogously.
We set $r:=N_u-1$, then $\ord \, q_{u,u}=r+1$. Using the $q$-Leibniz formula of Proposition \ref{PropqCommut} we get
\begin{align*}
&\fvsl{u}^r\big( \redbrv{u}{v} + q_{u,v}\fvsl{v}\fvsl{u}\bigr)-\redhv{u}\fvsl{v}=\\
&=\bigl[ \fvsl{u}^r, \redbrv{u}{v} \bigr]_{q_{u,u}^rq_{u,v}}+q_{u,u}^rq_{u,v}\redbrv{u}{v}\fvsl{u}^r\\
&\qquad\qquad\qquad\qquad+q_{u,v}\bigl[ \fvsl{u}^r, \fvsl{v}\bigr]_{q_{u,v}^r}\fvsl{u} +q_{u,v}^{r+1}  \fvsl{v}\fvsl{u}^{r+1}-\redhv{u}\fvsl{v}\\
&= \sum_{i=0}^{r}q_{u,u}^iq_{u,v}^i\tbinom{r}{i}_{q_{u,u}}\bigl[\underbrace{\fvsl{u},\ldots [\fvsl{u}}_{r-i},\redbrv{u}{v} ]_{q_{u,u}q_{u,v}}\ldots\bigr]_{q_{u,u}^{r-i}q_{u,v}} \fvsl{u}^i\\
&\quad +
\sum_{i=0}^{r-1}q_{u,v}^{i+1}\tbinom{r}{i}_{q_{u,u}}\bigl[\underbrace{\fvsl{u},\ldots[\fvsl{u}}_{r-i},\fvsl{v} ]_{q_{u,v}}\ldots\bigr]_{q_{u,u}^{r-i-1}q_{u,v}} \fvsl{u}^{i+1} + q_{u,v}^{r+1}\fvsl{v}\fvsl{u}^{r+1}-\redhv{u}\fvsl{v}.
\end{align*}
Because of $\fvsl{u}^{r-i}\fvsl{v}\fvsl{u}^{i+1}\orddl \fvsl{u}^{r+1}\fvsl{v}$ for all $0\le i\le r$, this is modulo $\idlDL{\fvsl{u}^{r+1}\fvsl{v}}$ equal to 
\begin{align*}
&\sum_{i=0}^{r}q_{u,u}^iq_{u,v}^i\tbinom{r}{i}_{q_{u,u}}\bigl[\underbrace{\fvsl{u},\ldots [\fvsl{u}}_{r-i},\redbrv{u}{v} ]_{q_{u,u}q_{u,v}}\ldots\bigr]_{q_{u,u}^{r-i}q_{u,v}} \fvsl{u}^i\\
&\quad+
\sum_{i=0}^{r-1}q_{u,v}^{i+1}\tbinom{r}{i}_{q_{u,u}}\bigl[\underbrace{\fvsl{u},\ldots[\fvsl{u}}_{r-i-1},\redbrv{u}{v}]_{q_{u,u}q_{u,v}}\ldots\bigr]_{q_{u,u}^{r-i-1}q_{u,v}} \fvsl{u}^{i+1} - \bigl[\redhv{u},\fvsl{v}\bigr]_{q_{u,v}^{r+1}}.
\end{align*}
Now shifting the index of the second sum, we obtain
\begin{align*}
&\bigl[\underbrace{\fvsl{u},\ldots[\fvsl{u}}_{r},\redbrv{u}{v}]_{q_{u,u}q_{u,v}}\ldots\bigr]_{q_{u,u}^{r}q_{u,v}}- \bigl[\redhv{u},\fvsl{v}\bigr]_{q_{u,v}^{r+1}}\\
&\quad+
\sum_{i=1}^{r}q_{u,v}^i \Bigl(q_{u,u}^i\tbinom{r}{i}_{q_{u,u}}+\tbinom{r}{i-1}_{q_{u,u}}\Bigr)
\bigl[\underbrace{\fvsl{u},\ldots[\fvsl{u}}_{r-i},\redbrv{u}{v}]_{q_{u,u}q_{u,v}}\ldots\bigr]_{q_{u,u}^{r-i}q_{u,v}} \fvsl{u}^i.
\end{align*}
Finally we obtain the claim, since $q_{u,u}^i\tbinom{r}{i}_{q_{u,u}}+\tbinom{r}{i-1}_{q_{u,u}}=\tbinom{r+1}{i}_{q_{u,u}}=0$ for all $1\le i\le r$, by Eq.~\eqref{PascalDreieck} and $\ord\, q_{u,u}=r+1$.
\end{proof}

\begin{lem}
The overlaps $\bigl(\fvsl{u}^{N_u-i},\fvsl{u}^i,\fvsl{u}^{N_u-i}\bigr)$ fulfill condition \ref{ThmDL}(1a) for all $1\le i< N_u$,
if and only if the overlap $\bigl(\fvsl{u}^{N_u-1},\fvsl{u},\fvsl{u}^{N_u-1}\bigr)$ fulfills condition \ref{ThmDL}(1a), if and only if $
L(u)
\in \idlDL{\fvsl{u}^{N_u+1}}.
$
\end{lem}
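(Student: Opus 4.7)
The plan is to unpack condition~\ref{ThmDL}(1a) for each triple $(\fvsl{u}^{N_u-i},\fvsl{u}^i,\fvsl{u}^{N_u-i})$ into a commutator membership, and then use a telescoping identity to relate all these statements to the single element $L(u)$.

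Since $W_\sigma = W_\tau = \fvsl{u}^{N_u}$ and $f_\sigma = f_\tau = \redhv{u}$, condition~\ref{ThmDL}(1a) for the overlap $(\fvsl{u}^{N_u-i},\fvsl{u}^i,\fvsl{u}^{N_u-i})$ unfolds to
\[
f_\sigma C - A f_\tau \;=\; \redhv{u}\,\fvsl{u}^{N_u-i} - \fvsl{u}^{N_u-i}\,\redhv{u} \;=\; [\redhv{u},\fvsl{u}^{N_u-i}]_1 \;\in\; \idlDL{\fvsl{u}^{2N_u-i}}.
\]
The key identity is the $q{=}\zeta{=}1$ specialization of the $q$-Leibniz formula of Proposition~\ref{PropqCommut}(3), which collapses to the classical commutator telescoping
\[
[\redhv{u},\fvsl{u}^{n}]_1 \;=\; \sum_{k=0}^{n-1}\fvsl{u}^{k}\,[\redhv{u},\fvsl{u}]_1\,\fvsl{u}^{n-1-k} \;=\; -\sum_{k=0}^{n-1}\fvsl{u}^{k}\,L(u)\,\fvsl{u}^{n-1-k},
\]
using $L(u)=-[\redhv{u},\fvsl{u}]_1$.

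For the implication ``$L(u)\in\idlDL{\fvsl{u}^{N_u+1}}\Rightarrow$ (1a) for all $i$'', I observe that $\preceq_{\diamond}$ is a monoid partial order, so multiplying an element of $\idlDL{A}$ by $\fvsl{u}^{k}$ on the left and $\fvsl{u}^{n-1-k}$ on the right lands it in $\idlDL{\fvsl{u}^{k}\cdot A\cdot \fvsl{u}^{n-1-k}}$. Hence each summand of the telescoping identity sits in $\idlDL{\fvsl{u}^{N_u+n}}$; taking $n=N_u-i$ places the full sum in $\idlDL{\fvsl{u}^{2N_u-i}}$, giving (1a) for every $1\le i<N_u$. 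The reverse direction goes through the extremal overlap in which $B$ has maximal length $N_u-1$ (i.e.~$n=1$): the telescoping sum collapses to the single term $-L(u)$, and the ambient bound $\fvsl{u}^{2N_u-i}$ becomes exactly $\fvsl{u}^{N_u+1}$, so the overlap condition reads literally $L(u)\in\idlDL{\fvsl{u}^{N_u+1}}$. Passing between ``all $i$'' and a single fixed $i$ is then by specialization on one side and by the forward direction on the other.

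The main obstacle is purely bookkeeping: making sure the indices in the telescoping identity are tracked consistently with the shifts in the $\idlDL{\cdot}$ bound. Once this is verified, the three-way equivalence follows mechanically from the displayed identity together with the monoid-compatibility of $\preceq_{\diamond}$; no further calculations in the style of the preceding two lemmas are needed.
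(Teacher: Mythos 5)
The paper itself offers no argument here (its proof reads ``This is evident''), so your telescoping argument is a substitute for a missing proof rather than an alternative to an existing one, and its core is sound: the unpacking of condition \ref{ThmDL}(1a) for $(\fvsl{u}^{N_u-i},\fvsl{u}^i,\fvsl{u}^{N_u-i})$ into $[\redhv{u},\fvsl{u}^{N_u-i}]_1\in\idlDL{\fvsl{u}^{2N_u-i}}$, the identity $[\redhv{u},\fvsl{u}^{n}]_1=-\sum_{k=0}^{n-1}\fvsl{u}^{k}L(u)\fvsl{u}^{n-1-k}$, and the inclusion $\fvsl{u}^{k}\,\idlDL{\fvsl{u}^{N_u+1}}\,\fvsl{u}^{n-1-k}\subset\idlDL{\fvsl{u}^{N_u+n}}$ coming from the monoid-compatibility of $\orddl$ do establish that $L(u)\in\idlDL{\fvsl{u}^{N_u+1}}$ implies (1a) for every $i$, and that the overlap with $n=N_u-i=1$ is literally the condition $L(u)\in\idlDL{\fvsl{u}^{N_u+1}}$.

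The gap lies in which overlap that is. In Bergman's convention $W_\sigma=AB$, $W_\tau=BC$, the triple $(\fvsl{u}^{N_u-1},\fvsl{u},\fvsl{u}^{N_u-1})$ named in the lemma has $A=C=\fvsl{u}^{N_u-1}$ and $B=\fvsl{u}$, i.e.\ it is the case $i=1$, $n=N_u-1$, with condition $[\redhv{u},\fvsl{u}^{N_u-1}]_1\in\idlDL{\fvsl{u}^{2N_u-1}}$ --- a sum of $N_u-1$ translates of $-L(u)$ required to lie in the ideal attached to the \emph{long} word $\fvsl{u}^{2N_u-1}$. The overlap you call ``extremal, with $B$ of maximal length'' is the different triple $(\fvsl{u},\fvsl{u}^{N_u-1},\fvsl{u})$ (the case $i=N_u-1$, ambient word $\fvsl{u}^{N_u+1}$); the two coincide only when $N_u=2$. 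Your argument therefore proves (all $i$) $\Leftrightarrow$ $L(u)\in\idlDL{\fvsl{u}^{N_u+1}}$ $\Rightarrow$ ($i=1$ overlap), but not the remaining implication ($i=1$ overlap) $\Rightarrow$ $L(u)\in\idlDL{\fvsl{u}^{N_u+1}}$ that the three-way ``if and only if'' demands: knowing that a sum of $N_u-1$ translates of $L(u)$ lies in $\idlDL{\fvsl{u}^{2N_u-1}}$ does not let you recover the single summand $L(u)$ inside the smaller ideal $\idlDL{\fvsl{u}^{N_u+1}}$. You should either supply that implication or note explicitly that the distinguished overlap has to be read as $(\fvsl{u},\fvsl{u}^{N_u-1},\fvsl{u})$, the self-overlap on the shortest word $\fvsl{u}^{N_u+1}$, which is the form actually used later in Section \ref{SectProofPBWCrit} via the condition $[\fvsl u,\redhv{u}]_{1}\in I_{\prec\fvsl{u}^{N_u+1}}$.
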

\begin{proof}This is evident. \end{proof}

\noindent \textbullet \ We are left to prove the equivalence of (2) to its weaker version (2') of Theorem \ref{ThmPBWCrit}: For (2'a) we show that if $uv\in L$ and $\Sh{uv}{u}{v}$, then conditon (2a) is already fulfilled: By definition  $\redbrv{u}{v}=\fvsl{uv}$  and  
$$\bigl[ \redbrv{u}{v}, \fvsl{w} \bigr]_{q_{uv,w}}=
\bigl[ \fvsl{uv}, \fvsl{w} \bigr]  = \redbrv{uv}{w}
$$ 
modulo $I_{\prec\fvsl{u}\fvsl{v}\fvsl{w}}$. Now certainly  $\nSh{uvw}{uv}{w}$, thus 
$$\redbrv{uv}{w} = \partial_{u}^{\rho}(\redbrv{v}{w}) +q_{v,w}\redbrv{u}{w}\fvsl{v} -q_{u,v}\fvsl{v}\redbrv{u}{w}
$$ 
by Eq.~\eqref{RedCommutDefnVariableNotSh}. Hence in this case the $q$-Jacobi condition is fulfilled by the $q$-derivation formula of Proposition \ref{PropqCommut}.

For (2'b) of Theorem \ref{ThmPBWCrit}  it is enough to show the following: Let condition (2bi) hold for $u=v$, i.e., $[\fvsl u,  \redhv{u}]_{1}\in I_{\prec\fvsl{u}^{N_u+1}}$. Then, if condition 
(2bi) holds for some $u<v$ with $N_u<\infty$, then (2bi) also holds for $u<uv$ (whenever $uv\in L$). 
Analogously, if  (2bii) holds for $v<u$ with $N_u<\infty$, then also (2bii) holds for $vu<u$ (whenever $vu\in L$). 

Note that if $u<v$, then $uv<v$: Either $v$ does not begin with $u$, then $uv<v$; or let $v=uw$ for some $w\in\X$. Then $u<v=uw<w$ since $v\in\Ly$. Hence $uv=uuw<uw=v$. 

We will prove the first part (2'bi), (2'bii) is the same argument.  But before we formulate the following
\begin{lem}\label{LemSupplAssert} Let $a\in \kLv\# \k[\G]$, $A,W\in \Lv$ such that $a\preceq_L A\prec W$. 
Then $a \in I_{\prec W}$ if and only if $a\in I_{\preceq A}$.

\end{lem}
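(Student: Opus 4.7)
The plan is to split the biconditional in the usual way and then reduce to a well-founded induction on the largest index appearing in a chosen representation. The ``if'' direction is trivial: since $A \prec W$ by hypothesis, every elementary generator whose index $M$ satisfies $V\fvsl u\fvsl v W = M \preceq A$ (resp.~$V'\fvsl{u}^{N_u}W' \preceq A$) also satisfies $M \prec W$, giving the inclusion $I_{\preceq A} \subseteq I_{\prec W}$. So from here on I focus on the ``only if'' direction.

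Given a presentation $a = \sum_i \lambda_i g_i$ with each $g_i$ an elementary generator of $I_{\prec W}$ of index $M_i \prec W$, the first step is to separate $a = c + b$ where $c$ collects the summands with $M_i \preceq A$ (so $c \in I_{\preceq A}$ for free) and $b$ collects those with $A \prec M_i \prec W$. From the $\chi_{uv}$-homogeneity of $[\fvsl u,\fvsl v] - \redbrv{u}{v}$ and of $\fvsl{u}^{N_u} - \redhv{u}$, each elementary generator can be normalized so that its left $\G$-factor is trivial, and its expansion has leading $\Lv$-monomial equal to its index (cf.\ \cite[Lem.~3.6]{Helbig-Presentation}). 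Consequently $c \preceq_L A$, hence $b = a - c$ also satisfies $b \preceq_L A$. It suffices to show $b \in I_{\preceq A}$, which reduces the problem to the case where every index appearing satisfies $A \prec M_i \prec W$.

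I then induct on the $\prec$-maximal index $M^* := \max_{\prec}\{M_i\}$ (well-founded by Section~\ref{SectWellFoundOrder}). If $M^* \preceq A$ we are done. Otherwise $M^* \succ A$, so by $b \preceq_L A$ the $\Lv$-monomial $M^*$ does not appear in $b$. Writing each $M^*$-indexed generator in the form $\fvsl{u_1}\cdots ([\fvsl u, \fvsl v] - \redbrv{u}{v}) \cdots\fvsl{u_n} k = M^* k + r$ with $r$ of $\Lv$-support strictly $\prec M^*$ (analogously for the $[\fvsl u^{N_u}]$-type), the $M^*$-contributions of $b$ sum to zero in $\k[\G]$. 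Using this cancellation together with the explicit $q$-commutator identities of Proposition~\ref{PropqCommut} -- the $q$-Jacobi identity when two cuts of $M^*$ produce $[\fvsl u,\fvsl v]$- and $[\fvsl v,\fvsl w]$-type generators, and the restricted $q$-Leibniz formula when they produce $[\fvsl u,\fvsl v]$- and $\fvsl u^{N_u}$-type generators -- the cancelling sum $\sum_{M_i = M^*}\lambda_i g_i$ can be rewritten as an element of $I_{\prec M^*}$, exactly the mechanism encapsulated in the remark following the definition of $J(u<v<w)$ and $L(u,\cdot)$. Substituting this rewriting into the representation of $b$ strictly decreases $M^*$, so the induction hypothesis applies and yields $b \in I_{\preceq A}$.

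The hard part is the syzygy/rewriting step in the third paragraph: one must systematically turn a linear relation among the leading $\k[\G]$-coefficients of the $M^*$-indexed generators into an honest equality inside $I_{\prec M^*}$. This is precisely where the explicit elements $J(u<v<w)$, $L(u,u\le v)$, $L(u,v<u)$ enter as syzygy-witnesses, and where the $\chi$-grading ensures that group-element shifts match up coherently across the different cut positions in $M^*$.
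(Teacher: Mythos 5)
Your ``if'' direction and the initial splitting of a representation according to indices agree in spirit with the paper, but the core of your ``only if'' argument takes a different route and contains a genuine gap. The paper's own proof is a one-step leading-term argument: fix a representation of $a$ as a linear combination of generators $Ug(W_\sigma-f_\sigma)Vh$ with indices $UW_\sigma V\prec W$, let $E$ be the $\prec$-largest index occurring with non-zero coefficient, and observe that $E\succ A$ would contradict $a\preceq_L A$ because the leading monomial $E$ would survive in $a$; hence every index is already $\preceq A$ and the very same representation witnesses $a\in I_{\preceq A}$. No induction and no rewriting of the representation is performed.

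The gap in your argument is the syzygy step of your third paragraph. Rewriting a cancelling sum $\sum_{M_i=M^*}\lambda_i g_i$ of $M^*$-indexed generators as an element of $I_{\prec M^*}$ is exactly the assertion that the overlap ambiguities at $M^*$ are resolvable, i.e., that $J(u<v<w)\in I_{\prec \fvsl u\fvsl v\fvsl w}$ and $L(\cdot)\in I_{\prec\cdots}$. These are conditions (2a) and (2b) of Theorem \ref{ThmPBWCrit}, not hypotheses of Lemma \ref{LemSupplAssert}; the remark following the definitions of $J$ and $L$ only places these elements in the two-sided ideal generated by the relations, which is not enough to land in the filtered piece $I_{\prec M^*}$. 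Worse, the lemma is invoked precisely inside the proof that (2$'$) implies (2), where only the restricted conditions (2$'$) are available, so assuming full resolvability there is circular. Your underlying worry --- that top terms of a chosen representation might cancel --- is a legitimate subtlety which the paper's own two-line proof brushes aside, but the remedy cannot be to presuppose the resolvability that the theorem is trying to characterize; one would instead have to argue that a representation without top-term cancellation can be chosen, or observe that in every application of the lemma the representation is explicit and its largest index is visibly $\preceq A$.
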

\begin{proof} Clearly $I_{\preceq A}\subset I_{\prec W}$, since $A\prec W$. 
So denote by $\{(W_\sigma,f_\sigma)\ |\ \sigma\in\Sigma\}$ the set of Eqs.~\eqref{Reduv} and \eqref{Reduh} with $f_\sigma\prec_L W_\sigma$, and let $a \in I_{\prec W}$, i.e., $a$ is a linear combination of $Ug(W_\sigma-f_\sigma)Vh$  with $U,V\in \Lv$ such that $UW_\sigma V\prec W$. Denote by $E$ the $\prec$-biggest word of all $UW_\sigma V$ with non-zero coefficient. $E\succ A$ contradicts the assumption  $a\preceq_L A\prec W$. Hence $E\preceq A$ and therefore $f\in I_{\preceq A}$.
\end{proof}

Suppose (2bi) for $u<v$ with $N_u<\infty$ and $uv\in L$, i.e.,
\begin{align*}
&\bigl[\underbrace{\fvsl u,\ldots[\fvsl u}_{N_u-1},\fvsl{uv}]_{q_{u,u}q_{u,v}}\ldots\bigr]_{q_{u,u}^{N_u-1} q_{u,v}} - [\redhv{u},\fvsl v]_{q_{u,v}^{N_u}} \in I_{\prec\fvsl{u}^{N_u}\fvsl v}\\
\Leftrightarrow 
&\bigl[\underbrace{\fvsl u,\ldots[\fvsl u}_{N_u-2},\redbrv{u}{uv}]_{q_{u,u}^2 q_{u,v}}\ldots\bigr]_{q_{u,u}^{N_u-1} q_{u,v}} - [\redhv{u},\fvsl v]_{q_{u,v}^{N_u}} \in I_{\preceq \fvsl{u}^{N_u-1}\fvsl{w}U\fvsl{v}},
\end{align*}
for some $w\in L$ with $w>u$ and $U\in \Lv$ such that $\ell(U)+\ell(w)=\ell(u)$.
Here we used the relation $[\fvsl u,\fvsl{uv}]_{q_{u,uv}}-\redbrv{u}{uv}$, and Lemma \ref{LemSupplAssert} since the above polynomial is $\preceq \fvsl{u}^{N_u-1}\fvsl{w}U\fvsl{v}$ (by assumption $\redbr{u}{uv}\preceq_L [uuv]$, $\redh{u}\prec_L [u]^{N_u}$). Hence the condition (2bi) for $u<uv$ reads
\begin{align*}
&\bigl[\underbrace{\fvsl u,\ldots[\fvsl u}_{N_u-1},\redbrv{u}{uv}]_{q_{u,u}^2 q_{u,v}}\ldots\bigr]_{q_{u,u}^{N_u} q_{u,v}} - [\redhv{u},\fvsl{uv}]_{q_{u,u}^{N_u}q_{u,v}^{N_u}} \in I_{\prec\fvsl{u}^{N_u}\fvsl{uv}}\\
\Leftrightarrow&
\bigl[\fvsl u,  [\redhv{u},\fvsl v]_{q_{u,v}^{N_u}}  \bigr]_{q_{u,u}^{N_u} q_{u,v}} - [\redhv{u},\fvsl{uv}]_{q_{u,u}^{N_u}q_{u,v}^{N_u}} \in I_{\prec\fvsl{u}^{N_u}\fvsl{uv}},
\end{align*}
since $\fvsl u I_{\preceq \fvsl{u}^{N_u-1}\fvsl{w}U\fvsl{v}}, I_{\preceq \fvsl{u}^{N_u-1}\fvsl{w}U\fvsl{v}}\fvsl u\subset I_{\prec\fvsl{u}^{N_u}\fvsl{uv}} $ ($w>u$ and $w$ cannot begin with $u$ since $\ell(w)\le \ell(u)$, hence $w>uv$. By the $q$-Jacobi identity 
\begin{align*}
 \bigl[\fvsl u,  [\redhv{u},\fvsl v]_{q_{u,v}^{N_u}}  \bigr]_{q_{u,u}^{N_u} q_{u,v}}
&=
\bigl[[\fvsl u,  \redhv{u}]_{q_{u,u}^{N_u}},\fvsl v \bigr]_{q_{u,v}^{N_u +1}} +q_{u,u}^{N_u}\redhv{u}\fvslbr{u}{v}-q_{u,v}^{N_{u}}\fvslbr{u}{v}\redhv{u}\\
&= 
\bigl[[\fvsl u,  \redhv{u}]_{1},\fvsl v \bigr]_{q_{u,v}^{N_u+1}} +[\redhv{u},\fvsl{uv}]_{q_{u,v}^{N_{u}}} =[\redhv{u},\fvsl{uv}]_{q_{u,v}^{N_{u}}}.
\end{align*}
For the last two ``='' we used $q_{u,u}^{N_{u}}=1$, the relation $\fvslbr{u}{v}-\fvsl{uv}$ and $[\fvsl u,  \redhv{u}]_{1}\in I_{\prec\fvsl{u}^{N_u+1}}$ (We can use this condition:  Note that $[\fvsl u,  \redhv{u}]_{1}\preceq \fvsl u^{N_u}\fvsl{w'}U'$ for some $w'\in L$, $w'>u$, $U'\in \Lv$, $\ell(U')+\ell(w')=\ell(u)$, hence  $[\fvsl u,  \redhv{u}]_{1}\in I_{\preceq \fvsl u^{N_u}\fvsl{w'}U'}$ by Lemma \ref{LemSupplAssert}. Therefore $\fvsl v I_{\preceq \fvsl u^{N_u}\fvsl{w'}U'},I_{\preceq \fvsl u^{N_u}\fvsl{w'}U'}\fvsl v\subset I_{\prec\fvsl{u}^{N_u}\fvsl{uv}}$, like before).

\section{PBW basis in rank one} \label{SectExamplesRankOne}
We want to apply the  PBW basis  criterion to Hopf algebras of rank one and two for some fixed $L\subset\Ly$.
Especially we want to treat liftings of Nichols algebras. Therefore we define the following scalars which will guarantee a $\Gh$-graduation:
\begin{defn}\label{DefnLiftCoeffic} Let $L\subset\Ly$. 
Then we define coefficients
 $\mu_u\in\k$ for all $u\in D(L)$, and $\lambda_{w}\in \k$ for all $w\in C(L)$ by 
\begin{align*}
\mu_u=0,\text{ if } g_u^{N_u}=1\text{ or }\chi_u^{N_u}\neq \varepsilon, \qquad\quad
\lambda_{w}=0,\text{ if } g_{w}=1 \text{ or }\chi_{w}\neq \varepsilon,
\end{align*}
and otherwise they can be chosen arbitrarily.
\end{defn}

In this section let $V$ be a 1-dimensional vector space with basis $x_1$ and $\ord q_{11}=N\le\infty$. Since 
$
T(V)\cong \k[x_1]$ we have $\Ly=\{x_1\}$. We give the condition when
$
(T(V)\# \k[\G]) /\bigl( x_1^N-\redh{1} \bigr)
$
has the PBW basis $\{x_1\}$. By the PBW Criterion \ref{ThmPBWCrit} the only condition in $\k[ \fvsl 1] \#\k[\G]$ is 
$$
[\redhv{1},\fvsl 1]_{1}\in I_{\prec\fvsl{1}^{N+1}}.
$$

\begin{exs}Let $\operatorname{char} \k=0$ and $q\in\k^{\times}$ with $\ord q=N\ge 2$.
\begin{enumerate}
\item \emph{Nichols algebra $A_1$}. $T(V)/\bigl(x_1^N\bigr)$ has basis $\{x_1^{r}\,|\, 0\le r<N \}$.

\item \emph{Taft Hopf algebra}. Let $\Z/(N)=\langle g_1\rangle$ and $\chi_1(g_1):=q$.  The set $\{ x_1^{r}g \;|\;  0\le r< N,g\in \Z/(N)\}$ is a basis of
$
 T(q)\ \cong\  \bigl(\k[x_1]\#\k[\Z/(N)]\bigr)\big/(x_1^N).
$ 
 \item \emph{Radford Hopf algebra}. Let $\Z/(N^2)=\langle g_1\rangle$ and $\chi_1(g_1):=q$. The set $\{ x_1^{r}g \;|\;  0\le r< N,g\in \Z/(N^2)\}$ is a basis of
$
 r(q)\cong (\k[x_1]\#\k[\Z/(N^2)])/(x_1^N-(1-g_1^N)).
$ 
\item \emph{Liftings $A_1$}. The set $\{x_1^{r}g\,|\, 0\le r<N,g\in\G \}$ is a basis of
$
(T(V)\#\k[\G])/\bigl(x_1^{N}-\mu_1(1-g_1^{N})\bigr),
$
\end{enumerate}
\end{exs}
\begin{proof} 
(1) and (2) clearly fulfill the only condition above, since $\redh{1}=0$.

(3) is a special case of (4): It is $d_1\in(\kX\# \k[\G])^{\chi_1^N}$ by  Definition \ref{DefnLiftCoeffic} of $\mu_1$. Further 
$$
\bigl[ \mu_1(1-g_1^{N})  ,\fvsl{1} \bigr]_{1}=\mu_1\bigl[ 1  ,\fvsl{1} \bigr]_{1}- \mu_1\bigl[ g_1^{N} ,\fvsl{1} \bigr]_{1}= - \mu_1 (q_{11}^{N}-1) \fvsl{1}g_1^{N}=0,
$$ since $\ord q_{11}=N$. 
\end{proof}

\section{PBW basis in rank two and redundant relations}\label{SectExamplesRankTwo}
Let $V$ be a 2-dimensional vector space with basis $x_1$, $x_2$, hence $T(V)\cong \k\la x_1,x_2\ra$. In this chapter we apply the PBW Criterion \ref{ThmPBWCrit} to verify  for certain $L\subset\Ly$ that the algebra
$$
(T(V)\#\k[\G])/ I,
$$
with $I$ as in Section \ref{SectIdealOfCharHA}, has the PBW basis $[L]$. In particular, we examine
 the Nichols algebras and their liftings of \cite{Helbig-Lift}. Moreover, we will see how to find the redundant relations, and in addition, we will treat some classical examples. 

\subsection{PBW basis for $L=\{x_1 < x_2\}$}\label{SectPBWBasisA1xA1}
This is the easiest case and covers the Cartan Type $A_1\times A_1$, as well as many other examples. We are interested when $[L]$ builds up a PBW Basis of 
$$
(T(V)\#\k[\G])/\bigl([x_1x_2]-\red{12},\;x_1^{N_1}-\redh{1},\; x_2^{N_2}-\redh{2}\bigr),
$$
with $N_1=\ord q_{11},N_2=\ord q_{22}\in\{2,3,\ldots,\infty\}$. 
If $N_1=N_2=\infty$, then by the PBW Criterion \ref{ThmPBWCrit} 
 there is no condition in $\k\la\fvsl 1,\fvsl 2\ra\#\k[\G]$ 
such that we can choose $\red{12}$ arbitrarily with $\red{12} \prec_L [x_1x_2]$ and $\deg_{\Gh}(\red{12})=\chi_{1}\chi_2$:

\begin{exs}\label{ExLiftA1Basis}$ $
\begin{enumerate}
 \item \emph{Quantum plane}. The set $\{x_2^{r_2} x_1^{r_1}\;|\; r_2,r_1\ge 0\}$ is a basis of 
$
Q(q_{12}) \cong T(V)/([x_1x_2]).
$

\item \emph{Weyl algebra}.  If $q_{12}=1$, then $\{x_2^{r_2} x_1^{r_1}\;|\; r_2,r_1\ge 0\}$ is a basis of
$W\cong T(V)/([x_1x_2]-1).$
\end{enumerate}
\end{exs}

If $\ord q_{11}=N_{1}< \infty$ or $\ord q_{22}=N_2<\infty$, then by the PBW Criterion \ref{ThmPBWCrit} we have to check 
\begin{align}
&\label{ExA1timesA1AssocPowers}\bigl[\redhv{1},\fvsl{1} \bigr]_{1}\in  I_{\prec\fvsl{1}^{N_1+1}}, \quad\text{or}\quad \bigl[\redhv{2},\fvsl{2}\bigr]_{1}\in I_{\prec\fvsl{2}^{N_2+1}},\text{ and}\\
\label{ExA1timesA1Leibniz}
&\bigl[\underbrace{\fvsl{1},\ldots\bigl[\fvsl{1}}_{N_1-1},\redv{12}\bigr]_{q_{11}q_{12}}\ldots\bigr]_{q_{11}^{N_1-1}q_{12}}  -  \bigl[\redhv{1},\fvsl{2} \bigr]_{q_{12}^{N_1}} \in I_{\prec\fvsl{1}^{N_1}\fvsl 2},\text{ or}\\
\label{ExA1timesA1Leibniz2}&\bigl[\ldots
\bigl[\redv{12},\underbrace{\fvsl{2}\bigr]_{q_{12}q_{22}}\ldots,\fvsl{2}}_{N_2-1}
\bigr]_{q_{12}q_{22}^{N_2-1}}
-
\bigl[\fvsl{1},\redhv{2}\bigr]_{q_{12}^{N_2}} \in I_{\prec \fvsl{1}\fvsl{2}^{N_2}}.
\end{align}

\begin{exs}  Let $\lambda_{12},\mu_{1},\mu_2\in \k$ as in Definition \ref{DefnLiftCoeffic}.
\begin{enumerate}
\item \emph{Nichols algebra $A_1\times A_1$}. Let $q_{12}q_{21}=1$, then $\{x_2^{r_2} x_1^{r_1} \;|\;  0\le r_i< N_i \}$ is a basis of
$$
T(V)/\bigl([x_1x_2],\, x_1^{N_1},\, x_2^{N_2}\bigr).
$$
 \item \emph{Liftings $A_1\times A_1$}. Let $q_{12}q_{21}=1$, then $\{x_2^{r_2} x_1^{r_1} g \;|\;  0\le r_i< N_i,g\in\G \}$ is a basis of
$$
(T(V)\#\k[\G])/\bigl([x_1x_2]-\lambda_{12} (1-g_{12}),\;x_1^{N_1}-\mu_1(1-g_1^{N_1}),\; x_2^{N_2}-\mu_2(1-g_2^{N_2})\bigr).
$$

\item \emph{Book Hopf algebra}. Let $q\in\k^{\times}$ with $\ord q=N>2$, $\Z/(N)= \langle g_1 \rangle$, $g:=g_2:=g_2$, and $\chi_1(g_i):=q^{-1}$, $\chi_2(g_i):=q$ for $i=1,2$. Then $\{x_2^{r_2} x_1^{r_1} g \;|\;  0\le r_i< N,g\in\G \}$ is a basis of
$
h(1,q)\cong \bigl(\k\la x_1,x_2\ra\#\k[\Z/(N)]\bigr)/\bigl([x_1x_2],\ x_1^{N},\ x_2^{N}\bigr).
$

\item \emph{Frobenius-Lusztig kernel}. Let $q\in\k^{\times}$ with $\ord q=N>2$, $\Z/(N)=\langle g_1\rangle$, $g:=g_2:=g_1$, and $\chi_1(g_i):=q^{-2}$, $\chi_2(g_i):=q^2$ for $i=1,2$. Then  $\{x_2^{r_2} x_1^{r_1} g \;|\;  0\le r_i< N,g\in\G \}$ is a basis of
 $
u_q(\mathfrak{sl}_2)\cong  \bigl(\k\la x_1,x_2\ra\#\k[\Z/(N)]\bigr)/\bigl([x_1x_2]-(1-g^2),\ x_1^{N},\ x_2^{N}\bigr).
$
\end{enumerate}
\end{exs}
\begin{proof}
In (1) it is $\redh{1} =\redh{2}= \red{12}=0$. (3) and (4) are special cases of (2): By definition of $\lambda_{12},\mu_1,\mu_2$ the elements have the required $\Gh$-degree. As in Example \ref{ExLiftA1Basis} we show 
conditions Eq.~\eqref{ExA1timesA1AssocPowers}. 
Eq.~\eqref{ExA1timesA1Leibniz}: We have $\chi_1\chi_2=\varepsilon$ if $\lambda_{12}\neq 0$, hence $q_{11}q_{12}=1$ and then $q_{11}=q_{11}q_{12}q_{21}=q_{21}$, since $q_{12}q_{21}=1$.
 Using these equations we calculate
\begin{multline*}
 \bigl[\underbrace{\fvsl{1},\ldots\bigl[\fvsl{1}}_{N_1-1},\lambda_{12} (1-g_1g_2)\bigr]_{q_{11}q_{12}}\ldots\bigr]_{q_{11}^{N_1-1}q_{12}}
=-\lambda_{12}(1-q_{11}^2)\ldots(1-q_{11}^{N_1})\fvsl{1}^{N_1-1} g_1g_2 =0.
\end{multline*}
Further $\chi_i^{N_i}= \varepsilon$ if $\mu_1\neq 0$, thus $q_{21}^{N_1}=1$; by taking $q_{12}q_{21}=1$ to the $N_1$-th power, we deduce $q_{12}^{N_1}=1$. Then 
$
\bigl[\mu_1(1-g_{1}^{N_1}),\fvsl 2\bigr]_{q_{12}^{N_1}}=\mu_1(1-q_{12}^{N_1})\fvsl 2=0.$
 The remaining condition Eq.~\eqref{ExA1timesA1Leibniz2} works in a similar way.
\end{proof}

\subsection{PBW basis for $L=\{x_1 < x_1x_2 < x_2\}$} \label{SectPBWbasisT2}
We now examine the case when $[L]$ is a PBW Basis of $(T(V)\#\k[\G])/I$, where $I$ is generated by the following elements
\begin{align*}
[x_1x_1x_2]&-\red{112},  &x_1^{N_1}&-\redh{1},\\
                         [x_1x_2x_2]&-\red{122},  &  [x_1x_2]^{N_{12}}&-\redh{12},\\
                                       &           &     x_2^{N_2}&-\redh{2},
\end{align*}
with $\ord q_{11}=N_1$, $\ord q_{12,12}=N_{12}$, $\ord q_{22}=N_2\in \{2,3,\ldots,\infty\}$. We have in $\k\la \fvsl{1},\fvsl{12},\fvsl{2}\ra\# \k[\G]$ the elements
$$
\redbrv{1}{12}=\redv{112},\quad  \redbrv{1}{2}=\fvsl{12},\quad  \redbrv{12}{2}=\redv{122}.
$$
At first we want to study the conditions in general. By Theorem \ref{ThmPBWCrit}(2') we have to check the following: The only Jacobi condition is for $1<12<2$, namely
\begin{align}\label{ExA2JacobiCond}
\bigl[\redv{112},\fvsl{2}\bigr]_{q_{112,2}} - \bigl[\fvsl{1},\redv{122}\bigr]_{q_{1,122}} 
  + (q_{1,12}  - q_{12,2}) \fvsl{12}^2 \in  I_{\prec\fvsl 1\fvsl{12}\fvsl 2}.
\end{align}
There are the following restricted $q$-Leibniz conditions:
If $N_1<\infty$, then we have to check Eqs.~\eqref{ExA1timesA1AssocPowers} and \eqref{ExA1timesA1Leibniz} for $1<2$; 
note that we can omit the restricted Leibniz condition for  $1<12$ in (2') of Theorem \ref{ThmPBWCrit}.
In the same way if $N_2<\infty$, then there are the conditions Eqs.~\eqref{ExA1timesA1AssocPowers} and \eqref{ExA1timesA1Leibniz2}  
for $1<2$;  we can omit the condition for $12<2$.
Further  Eq.~\eqref{ExA1timesA1Leibniz} resp. \eqref{ExA1timesA1Leibniz2} is equivalent to
\begin{align}\label{ExA2LeibnizN1}
&\bigl[\underbrace{\fvsl 1,\ldots[\fvsl 1}_{N_1-2},\redv{112}]_{q_{11}^2
q_{12}}\ldots\bigr]_{q_{11}^{N_1-1} q_{12}} - [\redhv{1},\fvsl 2]_{q_{12}^{N_1}} \in I_{\prec\fvsl{1}^{N_1}\fvsl 2},\\
\label{ExA2LeibnizN2}
&\bigl[\ldots[\redv{122},\underbrace{\fvsl 2]_{q_{12}q_{22}^2}\ldots,\fvsl 2}_{N_2-2}\bigr]_{q_{12}q_{22}^{N_2-1}} 
- [\fvsl 1,\redhv{2}]_{q_{12}^{N_2}} \in I_{\prec\fvsl 1\fvsl{2}^{N_2}}.
\end{align}
In the case  $N_1=2$ resp.~$N_2=2$ then  condition Eq.~\eqref{ExA2LeibnizN1}  resp.~\eqref{ExA2LeibnizN2} is
$$
\redv{112}  - [\redhv{1},\fvsl 2]_{q_{12}^{2}} \in I_{\prec\fvsl{1}^{2}\fvsl 2}\quad \text{ resp. }\quad
\redv{122}  - [\fvsl 1,\redhv{2}]_{q_{12}^{2}} \in I_{\prec\fvsl 1\fvsl{2}^{2}}.
$$
Here we see with Corollary \ref{CorIdentFreeAlg} that by the restricted $q$-Leibniz formula $[x_1x_1x_2]-\red{112}\in (x_1^2-\redh{1})$ resp.~$[x_1x_2x_2]-\red{122}\in (x_2^2-\redh{2})$, hence these two relations are redundant. Suppose $[\redh{1},x_2]_{q_{12}^{2}}\prec_L [x_1x_1x_2]$ resp.~$[x_1,\redh{2}]_{q_{12}^{2}}\prec_L [x_1x_2x_2]$. Thus if we define
\begin{align}\label{ExA2LeibnizN1=2orN2=2}
\redv{112}:= [\redhv{1},\fvsl 2]_{q_{12}^{2}}\quad\text{ resp. }\quad\redv{122}:= [\fvsl 1,\redhv{2}]_{q_{12}^{2}},
\end{align}
then condition Eq.~\eqref{ExA2LeibnizN1}  resp.~\eqref{ExA2LeibnizN2} is fulfilled.

Finally, if $N_{12}<\infty$, then there are the conditions 
\begin{align}\label{ExA2LeibnizN12}
\begin{split}\bigl[\redhv{12},\fvsl{12}\bigr]_{1}\in  I_{\prec\fvsl{12}^{N_{12}+1}},\\
\bigl[\ldots[\redv{112},\underbrace{\fvsl{12}]_{q_{1,12}q_{12,12}}\ldots,\fvsl{12}}_{N_{12}-1}\bigr]_{q_{1,12}q_{12,12}^{N_{12}-1}} 
- [\fvsl 1,\redhv{12}]_{q_{1,12}^{N_{12}}} \in I_{\prec\fvsl 1\fvsl{12}^{N_{12}}},\\
\bigl[\underbrace{\fvsl{12},\ldots[\fvsl{12}}_{N_{12}-1},\redv{122}]_{q_{{12},{12}} 
q_{{12},2}}\ldots\bigr]_{q_{{12},{12}}^{N_{12}-1} q_{{12},2}} - [\redhv{{12}},\fvsl 2]_{q_{{12},2}^{N_{12}}} \in I_{\prec\fvsl{{12}}^{N_{12}}\fvsl 2}.
\end{split}
\end{align}


Now we want to take a closer look at Eq.~\eqref{ExA2JacobiCond}. Essentially, there are two cases: If $q_{11}=q_{22}$ we set $q:=q_{112,2}= q_{1,122}$ and then Eq.~\eqref{ExA2JacobiCond} reads
\begin{align}\label{JacCondq11=q22}
\bigl[\redv{112},\fvsl{2}\bigr]_{q} - \bigl[\fvsl{1},\redv{122}\bigr]_{q} \in  I_{\prec\fvsl 1\fvsl{12}\fvsl 2}.
\end{align}
Else if $q_{11}\neq q_{22}$. Suppose $
N_{12}=\ord q_{12,12}=2$, then we define
\begin{align*}
\redh{12}:=-(q_{1,12}-q_{12,2})^{-1}\bigl(\bigl[\red{112},x_2\bigr]_{q_{1,2}q_{12,2}} - \bigl[\fvsl{1},\red{122}\bigr]_{q_{1,122}}  \bigl).
\end{align*} 
It is $[x_1x_2]^2-\redh{12}\in \bigl([x_1x_1x_2]-\red{112},\, [x_1x_2x_2]-\red{122}\bigr)$ by the $q$-Jacobi identity, see Eq.~\eqref{ExA2JacobiCond} and Corollary \ref{CorIdentFreeAlg}, i.e., this relation is redundant. Further  $\redh{12}\in (\kX\# \k[\G]))^{\chi_{12}^2}$. Let us assume
that $\redh{12}\prec_L [x_1x_2]^2$, e.g., $\red{122},\red{112}$ are linear combinations of monomials of length $<3$.
Then for 
\begin{align}
\label{ExJacobiq11neqq22}
\redhv{12} := -(q_{1,12}-q_{12,2})^{-1}\bigl(\bigl[\redv{112},\fvsl{2}\bigr]_{q_{1,2}q_{12,2}} -\bigl[\fvsl{1},\redv{122}\bigr]_{q_{1,122}} \bigl)
\end{align}
condition Eq.~\eqref{ExA2JacobiCond} is fulfilled.

As a demonstration we want to proof that the Hopf algebras coming from liftings of a Nichols algebra with Cartan matrix $A_2$ \cite[Thm.~5.9]{Helbig-Lift}, admit a PBW basis $[L]$ (this is already known for liftings of Nichols algebras of Cartan type $A_2$  \cite{AS-A2}, but not for non-Cartan type):

\begin{prop}[Liftings $A_2$] \label{ExLiftingsA2}\label{PropPBWLiftingsA2} Consider the Hopf algebras $(\k\la x_1,x_2\ra\# \k[\G])/ I$ where $I$ depends upon $(q_{ij})$ as follows:
\begin{enumerate}
\item[\emph{(1)}] Cartan type $A_2$: $q_{12}q_{21}=q_{11}^{-1}=q_{22}^{-1}$.\\
\emph{(a)}  If $q_{11}=-1$, then let $I$ be generated by
\begin{align*}
\qquad\quad 
x_1^{2}-\mu_1(1-g_1^2),\qquad  
    [x_1x_2]^{2} - 4\mu_1 q_{21}x_2^2-\mu_{12}(1-g_{12}^2), 
    \qquad  
    x_2^{2}-\mu_2(1-g_2^2).
\end{align*}

\emph{(b)} 
If $\ord q_{11}=3$, then let $I$ be generated by
\begin{align*}
[x_1x_1x_2]&-\lambda_{112}(1-g_{112}),\quad
   [x_1x_2x_2]-\lambda_{122}(1-g_{122}),\\
    x_1^{3}&-\mu_1(1-g_1^3),\\ 
[x_1x_2]^{3}& +(1-q_{11})q_{11}\lambda_{112}[x_1x_2x_2] \\
            &\qquad-    \mu_1(1-q_{11})^3 x_2^3  -\mu_{12}(1-g_{12}^3),\\
x_2^{3}&-\mu_2(1-g_2^3).
\end{align*}

\emph{(c)} If $N:=\ord q_{11}\ge 4$, 
then  then let $I$ be generated by, see \emph{\cite{AS-A2}},
\begin{align*}
[x_1x_1x_2]&,\quad [x_1x_2x_2],\\ 
                 x_1^{N}&-\mu_1(1-g_1^N),\\ 
                  [x_1x_2]^{N}&-\mu_1(q_{11}-1)^N q_{21}^{\frac{N(N-1)}{2}} x_2^N-\mu_{12}(1-g_{12}^N),\\ 
                  x_2^{N}&-\mu_2(1-g_2^N).
\end{align*}

\item[\emph{(2)}] Let $q_{12}q_{21}=q_{11}^{-1}$, $q_{22}=-1$.\\
 \emph{(a)} If $4 \neq N:=\ord q_{11} \ge 3$, then let $I$ be generated by
\begin{align*}
[x_1x_1x_2], \qquad  x_1^{N}-\mu_1(1-g_1^N),\qquad
  x_2^2  -\mu_2(1-g_2^2).
\end{align*} 

\emph{(b)} If $\ord q_{11}=4$, then let $I$ be generated by
\begin{align*}
[x_1x_1x_2]-\lambda_{112}(1-g_{112}), \qquad x_1^{4}-\mu_1(1-g_1^4),\qquad  x_2^2-\mu_2(1-g_2^2).
\end{align*}

\item[\emph{(3)}] Let $q_{11}=-1$, $q_{12}q_{21}=q_{22}^{-1}$.\\
 \emph{(a)} If  $4 \neq N:= \ord q_{22}\ge 3$, then let $I$ be generated by 
\begin{align*}
[x_1x_2x_2], \qquad x_1^2  -\mu_1(1-g_1^2) ,\qquad
  x_2^{N}-\mu_2(1-g_2^N).
\end{align*} 

\emph{(b)} If $\ord q_{22}=4$, then let $I$ be generated by
\begin{align*}
[x_1x_2x_2]-\lambda_{122}(1-g_{122}), \qquad  x_1^2-\mu_1(1-g_1^2),\qquad  x_2^{4}-\mu_2(1-g_2^4).
\end{align*} 

\item[\emph{(4)}]   Let $q_{11}=q_{22}=-1$ and $ N:=\ord q_{12}q_{21}\ge 3$.\\
\emph{(a)} If $q_{12}\neq \pm 1$, then let $I$ be generated by
\begin{align*}
 x_1^{2}-\mu_1(1-g_1^2),\qquad 
[x_1x_2]^{N}&-\mu_{12}(1-g_{12}^N),\qquad
x_2^{2}.\end{align*}
\emph{(b)} If $q_{12}=\pm 1$, then let $I$ be generated by
\begin{align*} 
 x_1^{2},\qquad
[x_1x_2]^{N}-\mu_{12}(1-g_{12}^N),\qquad
x_2^{2}-\mu_2(1-g_2^2).
\end{align*}
\end{enumerate}
All of these Hopf algebras have basis $\{x_2^{r_2}[x_1x_2]^{r_{12}} x_1^{r_1}g\;|\; 0\le r_u<N_u\text{ for all }u\in L,\,g\in\G\}.$
\end{prop}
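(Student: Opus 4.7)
The plan is to apply the PBW Criterion (Theorem \ref{ThmPBWCrit}) with the Shirshov closed set $L=\{x_1<x_1x_2<x_2\}$, using the general reduction carried out in Section \ref{SectPBWbasisT2}. That reduction leaves, for each case, only the $q$-Jacobi condition \eqref{ExA2JacobiCond} and the restricted $q$-Leibniz conditions \eqref{ExA1timesA1AssocPowers}, \eqref{ExA2LeibnizN1}, \eqref{ExA2LeibnizN2}, \eqref{ExA2LeibnizN12} that are active (those whose $N_u<\infty$). For each case we first read off $N_1,N_2,N_{12}$, identify $\red{112},\red{122},\redh{1},\redh{12},\redh{2}$, and observe from Definition \ref{DefnLiftCoeffic} that the choice of the coefficients $\lambda_{\,\cdot\,},\mu_{\,\cdot\,}$ guarantees the required $\Gh$-homogeneity so that the ideal $I$ is of the form treated in Section \ref{SectIdealOfCharHA}.

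Several structural simplifications reduce the work considerably. Whenever $N_1=2$ (so $q_{11}=-1$) the generator $[x_1x_1x_2]-\red{112}$ is redundant: it is forced by $x_1^2-\redh{1}$ via the restricted $q$-Leibniz formula of Proposition \ref{PropqCommut}, and one only has to set $\redv{112}:=[\redhv{1},\fvsl 2]_{q_{12}^2}$ as in \eqref{ExA2LeibnizN1=2orN2=2}; the analogous statement holds when $N_2=2$. This takes care of the commutator relations in cases \emph{(1)(a)}, \emph{(3)}, \emph{(4)(b)}. When, in addition, $q_{11}\ne q_{22}$ and $N_{12}=2$ (cases \emph{(1)(a)} and \emph{(4)(a)}), the $q$-Jacobi condition forces the power $[x_1x_2]^2-\redh{12}$ via \eqref{ExJacobiq11neqq22}, which is precisely where the explicit coefficients $4\mu_1q_{21}$, etc., come from. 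These redundancies reduce each such case to verifying \eqref{ExA1timesA1AssocPowers} for the surviving powers, which is immediate since the relevant $\redh{u}$ lie in $\k[\G]$ and $\chi_u^{N_u}=\varepsilon$ makes $[\redhv{u},\fvsl u]_{1}$ vanish, exactly as in the rank one computation of Section \ref{SectExamplesRankOne}.

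The cases that genuinely require computation are \emph{(1)(b)}, \emph{(1)(c)}, and to a lesser extent \emph{(2)(b)} and \emph{(3)(b)}. Here $q_{11}=q_{22}$, so the Jacobi condition collapses to \eqref{JacCondq11=q22}, and one computes
\[
\bigl[\lambda_{112}(1-g_{112}),\fvsl{2}\bigr]_{q}-\bigl[\fvsl{1},\lambda_{122}(1-g_{122})\bigr]_{q}
=\lambda_{112}(q-q_{112,2})\fvsl 2 g_{112}-\lambda_{122}(q_{1,122}-q)\fvsl 1 g_{122},
\]
which is seen to lie in $I_{\prec\fvsl 1\fvsl{12}\fvsl 2}$ once one rewrites the surviving terms using the commutation rules between $\G$ and the generators (they have length strictly smaller than $\ell(\fvsl 1\fvsl{12}\fvsl 2)=4$). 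The restricted $q$-Leibniz conditions \eqref{ExA2LeibnizN1}, \eqref{ExA2LeibnizN2} for the powers $x_1^N$, $x_2^N$ reduce, after expanding the iterated commutators, to identities of the form $[\redhv i,\fvsl j]=0$ modulo the group algebra, because $\chi_i^{N_i}=\varepsilon$ when $\mu_i\ne 0$; these follow from the same bicharacter manipulation as in Section \ref{SectPBWBasisA1xA1}.

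The main obstacle is condition \eqref{ExA2LeibnizN12} for $[x_1x_2]^N$ in cases \emph{(1)(b)} and \emph{(1)(c)}: one has to expand
\[
\bigl[\underbrace{\fvsl{12},\ldots[\fvsl{12}}_{N_{12}-1},\redv{122}]_{q_{12,12}q_{12,2}}\ldots\bigr]_{q_{12,12}^{N_{12}-1}q_{12,2}}
\quad\text{and}\quad
\bigl[\ldots[\redv{112},\underbrace{\fvsl{12}]_{q_{1,12}q_{12,12}}\ldots,\fvsl{12}}_{N_{12}-1}\bigr]_{q_{1,12}q_{12,12}^{N_{12}-1}},
\]
compare the result with $[\redhv{12},\fvsl 2]_{q_{12,2}^{N_{12}}}$ respectively $[\fvsl 1,\redhv{12}]_{q_{1,12}^{N_{12}}}$, and extract precisely the correction term $-\mu_1(1-q_{11})^N q_{21}^{N(N-1)/2}\fvsl 2^N$ in \emph{(1)(c)} (respectively the mixed term $(1-q_{11})q_{11}\lambda_{112}[x_1x_2x_2]$ in \emph{(1)(b)}) that appears in the statement. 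This is a bookkeeping exercise using Proposition \ref{PropqCommut} together with the $q$-Pascal identity \eqref{PascalDreieck} and the fact that $\ord q_{12,12}=N$ (so the middle $q$-binomial coefficients vanish by \eqref{qBinomCoeffZero}); it is conceptually the same calculation as the proof of \cite{AS-A2} for Cartan type $A_2$, now carried out inside $\k\la\fvsl 1,\fvsl{12},\fvsl 2\ra\#\k[\G]$ and interpreted modulo $I_{\prec\fvsl{12}^{N_{12}}\fvsl 2}$ etc., rather than modulo relations in $\kX\#\k[\G]$.
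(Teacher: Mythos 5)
Your overall strategy --- apply Theorem \ref{ThmPBWCrit} through the reduction of Section \ref{SectPBWbasisT2} and then check the surviving $q$-Jacobi and restricted $q$-Leibniz conditions case by case, using Eq.~\eqref{ExA2LeibnizN1=2orN2=2} to dispose of the Serre-type relations whenever $N_1=2$ or $N_2=2$ --- is exactly the paper's strategy, and your treatment of (1b), (1c), (2b), (3b) is in line with the actual proof. However, your second paragraph contains a concrete error that causes you to skip the main computations in cases (1a) and (4a). You claim that in these cases $q_{11}\neq q_{22}$ and $N_{12}=2$, so that $[x_1x_2]^{N_{12}}-\redh{12}$ is forced by the Jacobi redundancy \eqref{ExJacobiq11neqq22}, and that this ``is precisely where the explicit coefficients $4\mu_1 q_{21}$, etc., come from.'' But in (1a) one has $q_{11}=q_{22}=-1$, and in (4a) one has $q_{11}=q_{22}=-1$ with $N_{12}=\ord (q_{12}q_{21})=N\ge 3$; in both cases the Jacobi condition degenerates to the form \eqref{JacCondq11=q22} and forces nothing. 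The mechanism \eqref{ExJacobiq11neqq22} is applicable only in (2b) and (3b). Consequently the root-vector relation $[x_1x_2]^{N_{12}}-\redh{12}$ is \emph{not} redundant in (1a) and (4a), and the restricted $q$-Leibniz conditions \eqref{ExA2LeibnizN12} must be verified directly; moreover in (1a) one has $\redhv{12}=4\mu_1 q_{21}\fvsl{2}^2+\mu_{12}(1-g_{12}^2)$, which does not lie in $\k[\G]$, so your claim that the remaining checks are ``immediate since the relevant $\redh{u}$ lie in $\k[\G]$'' fails. These verifications (including, in (4a), an induction on the iterated commutators of $\redv{112}$ with $\fvsl{12}$) constitute the bulk of the paper's proof for those cases.

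A second, smaller issue: in your Jacobi computation for (1b) you conclude membership in $I_{\prec\fvsl 1\fvsl{12}\fvsl 2}$ from the fact that the surviving terms ``have length strictly smaller than $4$.'' That is not how $I_{\prec U}$ is defined: it is the span of the specific elements $Vg\bigl([\fvsl u,\fvsl v]-\redbrv{u}{v}\bigr)Wh$ and $V'g'\bigl(\fvsl{u}^{N_u}-\redhv{u}\bigr)W'h'$ with $V\fvsl u\fvsl v W\prec U$, resp.\ $V'\fvsl{u}^{N_u}W'\prec U$, not the span of all words of smaller length. The correct argument, as in the paper, is that under the homogeneity constraints of Definition \ref{DefnLiftCoeffic} (e.g.\ $\lambda_{112}\neq 0$ forces $\chi_{112}=\varepsilon$, hence $q_{112,2}=1$) the leftover terms vanish identically.
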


\begin{proof} Note that all defined ideals are $\Gh$-homogeneous by the definition of the coefficients. The conditions Eq.~\eqref{ExA1timesA1AssocPowers} are exactly as in Example \ref{ExLiftA1Basis}.

(1a) We have $N_1=N_2=N_{12}=2$. Since  $\redhv{1}=\mu_1(1-g_1^2)$ we have by the argument preceding Eq.~\eqref{ExA2LeibnizN1=2orN2=2}, that necessarily 
$$
\red{112}=[\mu_1(1-g_1^2),x_2]_{q_{12}^{2}} \quad\text{and}\quad \red{122}=[x_1,\mu_2(1-g_2^2)]_{q_{12}^{2}}
$$
and the conditions Eqs.~\eqref{ExA2LeibnizN1} and \eqref{ExA2LeibnizN2} are fulfilled. Note that $\red{112}=\mu_1 (1-q_{12}^{2}) x_2=0$: either $\mu_1=0$ or else $\mu_1\neq 0$, but then $\chi_1^2=\varepsilon$ and $q_{21}^2=1$. By squaring the assumption $q_{12}q_{21}=-1$, we obtain $q_{12}^2=1$. In the same way $\red{122}=0$.

Then the conditions Eq.~\eqref{ExA2LeibnizN12} are 
\begin{align*}\bigl[4\mu_1 q_{21}\fvsl{2}^2+\mu_{12}(1-g_{12}^2),\fvsl{12}\bigr]_{1}&\in I_{\prec\fvsl{12}^3}\\
[0,\fvsl{12}]_{q_{1,12}q_{12,12}}- [\fvsl 1,4\mu_1 q_{21}\fvsl{2}^2+\mu_{12}(1-g_{12}^2)]_{q_{1,12}^{2}}& \in I_{\prec\fvsl 1\fvsl{12}^{2}},\\
[\fvsl{12},0]_{q_{{12},{12}}q_{{12},2}} - [4\mu_1 q_{21}\fvsl{2}^2+\mu_{12}(1-g_{12}^2),\fvsl 2]_{q_{{12},2}^{2}} &\in I_{\prec\fvsl{{12}}^{2}\fvsl 2}.
\end{align*}
Again, if $\mu_1\neq 0$, then $q_{12}^2=q_{21}^2=1$, hence $q_{1,12}^{2}=1$ and $q_{2,12}^2=1$. If $\mu_{12}\neq 0$, then $\chi_{12}^2=\varepsilon$ and $q_{1,12}^{2}=1$; in this case also $q_{12}^2=q_{21}^2=1$. 
Thus modulo $I_{\prec\fvsl{12}^3}$ we have 
\begin{multline*}
 \bigl[4\mu_1 q_{21}\fvsl{2}^2+\mu_{12}(1-g_{12}^2),\fvsl{12}\bigr]_{1}
= 4\mu_1 q_{21}  \bigl[\fvsl{2}^2,\fvsl{12}\bigr]_{1}- \mu_{12}(q_{12,12}^2-1)\fvsl{12}g_{12}^2\\
= 4\mu_1\mu_2 q_{21}  \bigl[1-g_2^2,\fvsl{12}\bigr]_{1}
=-4\mu_1\mu_2 q_{21}  (q_{2,12}^2-1)\fvsl{12}g_2^2=0.
\end{multline*}
Further modulo  $I_{\prec\fvsl 1\fvsl{12}^{2}}$ we get
\begin{align*}
 [\fvsl 1,4\mu_1 q_{21}\fvsl{2}^2 +\mu_{12}(1-g_{12}^2)]_{1}
&= 4\mu_1q_{21}[\fvsl 1,\fvsl{2}^2]_{1} + \mu_{12}[\fvsl 1, 1-g_{12}^2]_{1}\\
&= 4\mu_1q_{21}\redv{122} -\mu_{12}(1-q_{12,1}^2)\fvsl 1 g_{12}^2=0,
\end{align*}
which means that the second condition is fulfilled. The third one of Eq.~\eqref{ExA2LeibnizN12} works analogously.

The last condition is Eq.~\eqref{ExA2JacobiCond}, or equivalently condition Eq.~\eqref{JacCondq11=q22}  since $q_{11}=q_{22}$:
$$
\bigl[0,\fvsl{2}\bigr]_{q} - \bigl[\fvsl{1},0\bigr]_{q} =0 \in  I_{\prec\fvsl 1\fvsl{12}\fvsl 2}.
$$

(1b) 
Either $\lambda_{112}=\lambda_{122}=0$, 
or $\chi_{112}= \varepsilon$ and/or $\chi_{122}= \varepsilon$, from where we conclude $q:=q_{11}=q_{12}=q_{21}=q_{22}$.
We start with Eq.~\eqref{ExA2JacobiCond}: Since $q^3=1$ we have
$
\bigl[\lambda_{112}(1-g_{112}),\fvsl{2}\bigr]_{1} - \bigl[\fvsl{1},\lambda_{122}(1-g_{122})\bigr]_{1}=0.
$ 
We continue with Eq.~\eqref{ExA2LeibnizN1}: Either $\mu_1=0$ or $\chi_1^3=\varepsilon$, hence  $q_{21}^3=1$ and then also $q_{12}^3=(q_{12}q_{21})^3=q_{11}^{-3}=1$. Then 
 $
\bigl[\fvsl 1,  \lambda_{112}(1-g_{112})  \bigr]_{1} - [\mu_1(1-g_1^3),\fvsl 2]_{1} =0.$ 
Next, Eq.~\eqref{ExA2LeibnizN2}: In the same way, $\mu_2\neq 0$ or $q_{21}^3=q_{12}^3=1$. Then 
$
\bigl[\lambda_{122}(1-g_{122}),\fvsl 2\bigr]_{1} - [\fvsl 1,\mu_2(1-g_2^3)]_{1}=0.
$ 
For Eq.~\eqref{ExA2LeibnizN12} we have $q_{1,12}^3=1$ if $\mu_{12}\neq 0$. Thus $q_{12}^3=1$, moreover $q_{21}^3=(q_{12}q_{21})^3=q_{11}^{-3}=1$.
Hence modulo $I_{\prec\fvsl 1\fvsl{12}^{3}}$ we have
\begin{multline*}
 \bigl[[\lambda_{112}(1-g_{112}),\fvsl{12}]_{q_{1,12}q_{12,12}},\fvsl{12}\bigr]_{q_{1,12}q_{12,12}^{2}} \\
- \bigl[\fvsl 1,-(1-q_{11})q_{11}\lambda_{112} \lambda_{122}(1-g_{122})   +   \mu_1(1-q_{11})^3 \fvsl{2}^3  +\mu_{12}(1-g_{12}^3)\bigr]_{q_{1,12}^{3}}=0,
\end{multline*}
since each summand is zero. Further a straightforward calculation shows
\begin{multline*}
 \bigl[\fvsl{12},[\fvsl{12},\lambda_{122}(1-g_{122})]_{q_{{12},{12}}q_{{12},2}}\bigr]_{q_{{12},{12}}^{2} q_{{12},2}}\\ - \bigl[-(1-q_{11})q_{11}\lambda_{112} \lambda_{122}(1-g_{122})   +   \mu_1(1-q_{11})^3 \fvsl{2}^3  +\mu_{12}(1-g_{12}^3),\fvsl 2\bigr]_{q_{{12},2}^{2}}=0.
\end{multline*}
Finally, an easy calculation shows that $$\bigl[-(1-q_{11})q_{11}\lambda_{112} \lambda_{122}(1-g_{122})+ \mu_1(1-q_{11})^3 \fvsl{2}^3  +\mu_{12}(1-g_{12}^3),\fvsl{12}\bigr]_{1}=0$$ modulo $I_{\prec\fvsl{12}^{4}}$, again by definition of the coefficients.

(1c) is a generalization of (1a) (and (1b) if $\lambda_{112}=\lambda_{122}=0$) and works completely in the same way (only the Serre-relations $[x_1x_1x_2]=[x_1x_2x_2]=0$ are not redundant, as they are (1a)). We leave this to the reader.

(2a) We leave this to the reader and prove the little more complicated (2b):  
 Since we have $N_2=2$, as in (1a) we deduce from  Eq.~\eqref{ExA2LeibnizN1=2orN2=2}, that 
$
 \red{122}=[x_1,\mu_2(1-g_2^2)]_{q_{12}^{2}}=\mu_2(q_{21}^2-1)x_1g_2^2
$
and the condition Eq.~\eqref{ExA2LeibnizN2} is fulfilled.

If $\lambda_{112}\neq 0$ then $q_{11}=q_{21}$ of order $4$, $q_{12}=q_{22}=-1$; if $\mu_1\neq 0$ then $q_{12}^4=1$. Then Eq.~\eqref{ExA2LeibnizN1} is fulfilled:
$
 \bigl[\fvsl 1,[\fvsl 1,\lambda_{112}(1-g_{112})]_{1}\bigr]_{q_{11}} - [\mu_1(1-g_1^4),\fvsl 2]_{1} =0,
$
since both summands are zero.

It is $q_{11}\neq q_{22}$, $\ord q_{12,12}=2$ and $\redv{112}$ resp.~$\redv{122}$ are linear combinations of monomials of length $0$ resp.~$1$.  By the discussion before Eq.~\eqref{ExJacobiq11neqq22}, we see that $[x_1x_2]^2-\redh{12}$ is redundant and for 
\begin{align*}
\redhv{12}&:=-(q_{1,12}-q_{12,2})^{-1}\bigl(\bigl[\lambda_{112}(1-g_{112}),x_2\bigr]_{-1} - \bigl[x_1, \mu_2(q_{21}^2-1)x_1g_2^2\bigr]_{q_{11}} \bigl)\\
&= -q_{12}^{-1}(q_{11}+1)^{-1}\bigl(\lambda_{112}2  x_2 - \mu_2\underbrace{(q_{21}^2-1)(1-q_{11}q_{21}^2)}_{=:q}x_1^2g_2^2 \bigl)
\end{align*}
the condition Eq.~\eqref{ExA2JacobiCond} is fulfilled. We are left to show the conditions Eq.~\eqref{ExA2LeibnizN12} $\bigl[\redhv{12},\fvsl{12}\bigr]_{1}\in  I_{\prec\fvsl{12}^{3}},$
\begin{align*}
\bigl[ \redv{112}, \fvsl{12}\bigr]_{q_{112,12}}-\bigl[\fvsl{1},   \redhv{12}    \bigr]_{q_{1,12}^2} \in I_{\prec \fvsl 1\fvsl{12}^2}\quad\text{and}\quad \bigl[ \fvsl{12},\redv{122} \bigr]_{q_{12,122}}-\bigl[\redhv{12},\fvsl{2} \bigr]_{q_{12,2}^2} \in I_{\prec\fvsl{12}^2\fvsl{2}}.
\end{align*}
We calculate the first one: Modulo $I_{\prec\fvsl{12}^{3}}$ we get
\begin{align*}\bigl[\redhv{12}&,\fvsl{12}\bigr]_{1}=
-q_{12}^{-1}(q_{11}+1)^{-1}\bigl(-\lambda_{112}2\underbrace{\bigl[\fvsl{12},x_2\bigr]_{1}}_{=\redv{122}}-\mu_2q
\underbrace{\bigl[x_1^2g_2^2,\fvsl{12}\bigr]_{1}}_{=q_{21}^2[\fvsl{1}^2,\fvsl{12}]_{q_{1,12}^2}g_2^2}\bigr).
\end{align*}
Now by the $q$-derivation property $[\fvsl{1}^2,\fvsl{12}]_{q_{1,12}^2}= \fvsl 1\redv{112}+q_{1,12}\redv{112}\fvsl 1=\lambda_{112}(1-q_{11})\fvsl 1.$ Because of the coefficient $\lambda_{112}$ the two summands in the parentheses have the coefficient $\pm 4\lambda_{112}\mu_2$, hence cancel. (3) works exactly as (2).

(4a) Since we have $N_1=N_2=2$, as in (1a) we deduce from  Eq.~\eqref{ExA2LeibnizN1=2orN2=2}, that 
$$
\red{112}=[\mu_1(1-g_1^2),x_2]_{q_{12}^{2}}=\mu_1(1-q_{12}^2)x_2 \quad\text{and}\quad \red{122}=[x_1,0]_{q_{12}^{2}}=0
$$
and the conditions Eqs.~\eqref{ExA2LeibnizN1} and \eqref{ExA2LeibnizN2} are fulfilled.

For the second condition of Eq.~\eqref{ExA2LeibnizN12} one can easily show by induction
\begin{multline*}
\bigl[\ldots[\redv{112},\underbrace{\fvsl{12}]_{q_{1,12}q_{12,12}}\ldots,\fvsl{12}}_{N-1}\bigr]_{q_{1,12}q_{12,12}^{N-1}}\\
= \mu_1(1-q_{12}^2)  \bigl[\ldots[\fvsl 2,\underbrace{\fvsl{12}]_{q_{11}q_{12}^2= q_{21}}\ldots,\fvsl{12}}_{N-1}\bigr]_{q_{11}q_{12}^{N}q_{21}^{N-1}} = \mu_1 \prod_{i=0}^{N-1}(1-q_{12}^{i+2}q_{21}^i)\fvsl 2\fvsl{12}^{N-1}=0.
\end{multline*}
The last equation holds since for $i=N-2$ we have $1-q_{12}^Nq_{21}^{N-2}=0$: if $\mu_1\neq 0$ then $q_{21}^2=1$ and $(q_{12}q_{21})^N=q_{12,12}^N=1$.
Further also $[\fvsl 1,\redhv{12}]_{q_{1,12}^{N}}=
[\fvsl 1,\mu_{12}(1-g_{12}^N)]_{1} = -\mu_{12}(1-q_{12,1}^N)\fvsl 1 g_{12}^N=0$, since either $\mu_{12}= 0$ or $q_{12}^N=q_{21}^N=(-1)^N$ such that $q_{12,1}^N=(-1)^N(-1)^N=1$. This proves the second  condition of Eq.~\eqref{ExA2LeibnizN12}. The third of Eq.~\eqref{ExA2LeibnizN12} is easy since $\red{122}=0$, and the first of Eq.~\eqref{ExA2LeibnizN12} is a direct computation.

Finally,  Eq.~\eqref{ExA2JacobiCond} is  Eq.~\eqref{JacCondq11=q22}, since $q_{11}=q_{22}$:
$
\bigl[\mu_1(1-q_{12}^2)\fvsl 2,\fvsl{2}\bigr]_{q_{112,2}} - \bigl[\fvsl{1},0\bigr]_{q_{1,122}}=0 
$
because of the relation $\fvsl 2^2=0$. 

(4b) works analogously to (4a). Note that here $\red{112}=0$ and $\red{122}=[x_1,\mu_2(1-g_2^2)]_{1}=\mu_2(q_{21}^2-1)x_1g_2^2$.
\end{proof}






\subsection{PBW basis for $L=\{x_1 < x_1x_1x_2 < x_1x_2 < x_2\}$}\label{SectPBWbasisT3} 

This PBW basis $[L]$ occurs in the Nichols algebras with Cartan matrix $B_2$ and their liftings 
\cite[Prop.~5.11,Thm.~5.13]{Helbig-Lift}. More generally, we list the conditions when $[L]$ is a PBW Basis of $(T(V)\#\k[\G])/I$ where $I$ is generated by
\begin{align*}
[x_1x_1x_1x_2] &-\red{1112},    &  x_1^{N_1}&-\redh{1},\\
[x_1x_1x_2x_1x_2]&-\red{11212}, &   [x_1x_1x_2]^{N_{112}}&-\redh{112},\\
[x_1x_2x_2]&-\red{122} ,         &  [x_1x_2]^{N_{12}}&-\redh{12}, \\ 
                     &           & x_2^{N_2}&-\redh{2}.
\end{align*}
In $\k\la\fvsl 1,\fvsl{112},\fvsl{12},\fvsl 2\ra\#\k[\G]$ we have the following $\redbrv{u}{v}$ ordered by $\ell(uv)$, $u,v\in L$: If $\Sh{uv}{u}{v}$ then
\begin{align*}
\redbrv{1}{2}  &= \fvsl{12},    &    \redbrv{12}{2}  &= \redv{122}  ,  &\redbrv{112}{12} &= \redv{11212},\\
\redbrv{1}{12}  &= \fvsl{112} , &    \redbrv{1}{112}  &= \redv{1112},
\end{align*}
and for $\nSh{1122}{112}{2}$ by Eq.~\eqref{RedCommutDefnVariableNotSh}
\begin{align*}
 \redbrv{112}{2} &= \partial_{1}^{\rho}(\redbrv{12}{2}) +q_{12,2}\redbrv{1}{2}\fvsl{12} -q_{1,12}\fvsl{12}\redbrv{1}{2},\\
                 &= \partial_{1}^{\rho}(\redv{122}) +(q_{12,2} -q_{1,12})\fvsl{12}^2.
\end{align*}
We have for $1 < 112 < 2$, $1 < 112 < 12$ and $112 < 12 < 2$ the following $q$-Jacobi conditions (note that we can leave out  $1<12<2$):
\begin{align}\label{ExB2JacobiCond1}
\begin{split}
&\bigl[\redv{1112},\fvsl{2}\bigr]_{q_{1112,2}} - \bigl[\fvsl{1},\redbrv{112}{2} \bigr]_{q_{1,1122}} \\
&\qquad\qquad+ q_{1,112} \fvsl{112}[\fvsl 1,\fvsl{2}] - q_{112,2} [\fvsl 1,\fvsl 2]\fvsl{112}
   \in  I_{\prec\fvsl 1\fvsl{112}\fvsl 2}\\
\Leftrightarrow& \bigl[\redv{1112},\fvsl{2}\bigr]_{q_{1112,2}} - \bigl[\fvsl{1},\partial_{1}^{\rho}(\redv{122})\bigr]_{q_{1,1122}}\\
 &\qquad - (q_{12,2} -q_{1,12})\redv{11212}- (q_{12,2} -q_{1,12}) q_{1,12}(q_{12,12}+1)\fvsl{12}\fvsl{112}\\
&\qquad\qquad + q_{1,112} \redv{11212}+ q_{112,2}(q_{1,112}q_{112,1}-1) \fvsl{12}\fvsl{112}
   \in  I_{\prec\fvsl 1\fvsl{112}\fvsl 2}\\
\Leftrightarrow& \bigl[\redv{1112},\fvsl{2}\bigr]_{q_{1112,2}} - \bigl[\fvsl{1},\partial_{1}^{\rho}(\redv{122})\bigr]_{q_{1,1122}} + \underbrace{q_{12}(q_{11}^2- q_{22} +q_{11})}_{=:q}\redv{11212}\\
 &\qquad + \underbrace{q_{12}^2(q_{22}(q_{11}^4q_{12}q_{21}-1)- q_{11}(q_{22} -q_{11}) (q_{12,12}+1))}_{=:q'}\fvsl{12}\fvsl{112}
   \in  I_{\prec\fvsl 1\fvsl{112}\fvsl 2}
\end{split}
\end{align}
If $q\neq 0$, we see that $[x_1x_1x_2x_1x_2]-\red{11212}\in  \bigl([x_1x_1x_1x_2]-\red{1112}, [x_1x_2x_2]-\red{122}\bigr)$ is redundant with 
$$
\red{11212}=-q^{-1}\bigl(\bigl[\red{1112},x_2\bigr]_{q_{1112,2}} - \bigl[x_1,\partial_{1}(\red{122})\bigr]_{q_{1,1122}} + q'[x_1x_2][x_1x_1x_2] \bigr)
$$ by Corollary \ref{CorIdentFreeAlg} and the $q$-Jacobi identity of Proposition \ref{PropqCommut}. We have $\deg_{\Gh}(\red{11212})=\chi_{11212}$; suppose that $\red{11212}\prec_L [x_1x_1x_2x_1x_2]$ (e.g. $\red{1112}$ resp.~$\red{122}$ are linear combinations of monomials of length $<4$ resp.~$<3$) then condition Eq.~\eqref{ExB2JacobiCond1} is fulfilled for
\begin{align*}
\redv{11212}:=-q^{-1}\bigl(\bigl[\redv{1112},\fvsl 2\bigr]_{q_{1112,2}} - \bigl[\fvsl 1, \partial_{1}^{\rho}(\redv{122}) \bigr]_{q_{1,1122}} + q'\fvsl{12}\fvsl{112} \bigr).
\end{align*}
There are three cases, where the coefficients $q,q'$ are of a better form for our setting: Since
$$
q=q_{12}\bigl((3)_{q_{11}}-(2)_{q_{22}}\bigr),\quad q'=q_{12}\bigl(q(1+ q_{11}^2q_{12}q_{21}q_{22})-q_{11}q_{12}(2)_{q_{22}}\bigr),
$$
we have
\begin{align*}
q&=q_{12}q_{11}\neq 0, & q'&=- q_{12}q_{11}^2q(1-q_{11}^2q_{12}q_{21}),      &&\text{if }q_{11}^2=q_{22},\\
q&=q_{12}(3)_{q_{11}}, & q'&=q_{12}q(1-q_{11}^2q_{12}q_{21}),                &&\text{if }q_{22}=-1,\\
q&=-q_{12}(2)_{q_{22}},& q'&=-q_{12}q(1+q_{11}+ q_{11}^2q_{12}q_{21}q_{22}) ,&&\text{if }\ord q_{11}=3.
\end{align*}
The second $q$-Jacobi condition for $1 < 112 < 12$ reads
\begin{align}\label{ExB2JacobiCond3}
\begin{split}
&\bigl[\redv{1112},\fvsl{12}\bigr]_{q_{1112,12}} - \bigl[\fvsl 1,\redv{11212}\bigr]_{q_{1,11212}}\\ 
&\qquad\qquad  + q_{1,112} \fvsl{112}[\fvsl 1,\fvsl{12}] - q_{112,12} [\fvsl 1,\fvsl{12}]\fvsl{112}\in I_{\prec\fvsl 1\fvsl{112}\fvsl{12}}\\
\Leftrightarrow& \bigl[\redv{1112},\fvsl{12}\bigr]_{q_{1112,12}} - \bigl[\fvsl 1,\redv{11212}\bigr]_{q_{1,11212}}
  +\underbrace{q_{11}^2q_{12}( 1 - q_{12}q_{21}q_{22})}_{=:q''} \fvsl{112}^2 \in I_{\prec\fvsl 1\fvsl{112}\fvsl{12}}
\end{split}
\end{align}
If $q''\neq 0$ then we see that $[x_1x_1x_2]^2-\redh{112}\in  \bigl([x_1x_1x_1x_2]-\red{11212}, [x_1x_1x_2x_1x_2]-\red{11212}\bigr)$ is redundant with 
$
\redh{112}=-q''^{-1}\bigl(   \bigl[\red{1112},[x_1x_2]\bigr]_{q_{1112,12}} - \bigl[x_1,\red{11212}\bigr]_{q_{1,11212}} \bigr)
$ 
by Corollary \ref{CorIdentFreeAlg} and the $q$-Jacobi identity of Proposition \ref{PropqCommut}. It is $\deg_{\Gh}(\redh{112})=\chi_{112}^2$; suppose that $\redh{112}\prec_L [x_1x_1x_2]^2$ then condition Eq.~\eqref{ExB2JacobiCond2} is fulfilled for
\begin{align*}
\redhv{112}:=-q''^{-1}\bigl(   \bigl[\redv{1112},\fvsl{12}\bigr]_{q_{1112,12}} - \bigl[x_1,\redv{11212}\bigr]_{q_{1,11212}} \bigr)
\end{align*}
If further $\ord q_{112,112}=2$ then we have to consider the restricted $q$-Leibniz  conditions for $\redhv{112}$ (see below).

The last $q$-Jacobi condition for $112 < 12 < 2$ is
\begin{align}\label{ExB2JacobiCond2}
\begin{split}
&\bigl[\redv{11212},\fvsl{2}\bigr]_{q_{11212,2}} - \bigl[\fvsl{112},\redv{122}\bigr]_{q_{112,122}} \\
 &\qquad\qquad+ q_{112,12} \fvsl{12}[\fvsl{112},\fvsl{2}] - q_{12,2} [\fvsl{112},\fvsl{2}]\fvsl{12} \in  I_{\prec\fvsl{112}\fvsl{12}\fvsl 2}\\
\Leftrightarrow&\bigl[\redv{11212},\fvsl{2}\bigr]_{q_{11212,2}} - \bigl[\fvsl{112},\redv{122}\bigr]_{q_{112,122}} \\
 &\qquad+ q_{112,12} \fvsl{12}\partial_{1}^{\rho}(\redv{122})  - q_{12,2} \partial_{1}^{\rho}(\redv{122})
\fvsl{12}\\
&\qquad\qquad
+\underbrace{q_{12}^2q_{22}(q_{22} -q_{11})(q_{11}^2q_{12}q_{21}- 1)}_{=:q'''} \fvsl{12}^3
 \in  I_{\prec\fvsl{112}\fvsl{12}\fvsl 2}\\
\end{split}
\end{align}
If $q'''\neq 0$ then we see that $[x_1x_2]^3-\redh{12}\in  \bigl([x_1x_1x_2x_1x_2]-\red{11212}, [x_1x_2x_2]-\red{122}\bigr)$ is redundant with $\redh{12}:=-q'''^{-1}\bigl(   \bigl[\red{11212},x_2\bigr]_{q_{11212,2}} - \bigl[[x_1x_1x_2],\red{122}\bigr]_{q_{112,122}} + q_{112,12} [x_1x_2]\partial_{1}(\red{122})  - q_{12,2} \partial_{1}(\red{122})[x_1x_2]\bigr)$ by Corollary \ref{CorIdentFreeAlg} and the $q$-Jacobi identity of Proposition \ref{PropqCommut}. It is $\deg_{\Gh}(\redh{12})=\chi_{12}^3$; suppose that $\redh{12}\prec_L [x_1x_1]^3$ (e.g., $\red{11212}$ resp.~$\red{122}$ are linear combinations of monomials of length $<5$ resp.~$<3$)  then condition Eq.~\eqref{ExB2JacobiCond2} is fulfilled for
\begin{align*}
\redhv{12}:&=-q''^{-1}\bigl(   \bigl[\redv{11212},\fvsl{2}\bigr]_{q_{11212,2}} - \bigl[\fvsl{112},\redv{122}\bigr]_{q_{112,122}} \\
 &\qquad\qquad+ q_{112,12} \fvsl{12}\partial_{1}^{\rho}(\redv{122})  - q_{12,2} \partial_{1}^{\rho}(\redv{122})
\fvsl{12}              \bigr)
\end{align*}
If further $\ord q_{12,12}=3$ then we have to consider the $q$-Leibniz conditions for $\redhv{12}$ (see below).

There are the following restricted $q$-Leibniz conditions:
If $N_1<\infty$, then $\bigl[\redhv{1},\fvsl{1}\bigr]_{1}\in  I_{\prec\fvsl{1}^{N_{1}+1}}$ and for $1<2$ (we can omit $1<12,1<112$)
\begin{align}\label{ExB2LeibnizN1}
\bigl[\underbrace{\fvsl 1,\ldots[\fvsl 1}_{N_1-3},\redv{1112}]_{q_{11}^3
q_{12}}\ldots\bigr]_{q_{11}^{N_1-1} q_{12}} - [\redhv{1},\fvsl 2]_{q_{12}^{N_1}} \in I_{\prec\fvsl{1}^{N_1}\fvsl 2}.
\end{align}
If $N_2<\infty$, then $\bigl[\redhv{2},\fvsl{2}\bigr]_{1}\in  I_{\prec\fvsl{2}^{N_{2}+1}}$ and for $1<2$ (we can omit $12<2,112<2$)
\begin{align}\label{ExB2LeibnizN2}
\bigl[\ldots[\redv{122}\underbrace{\fvsl 2]_{q_{12}q_{22}^2}\ldots,\fvsl 2}_{N_2-2}\bigr]_{q_{12}q_{22}^{N_2-1}} 
- [\fvsl 1,\redhv{2}]_{q_{12}^{N_2}} \in I_{\prec\fvsl 1\fvsl{2}^{N_2}}.
\end{align}
If $N_{12}<\infty$, then $\bigl[\redhv{12},\fvsl{12}\bigr]_{1}\in  I_{\prec\fvsl{12}^{N_{12}+1}}$ and for $1<12$, $12<2$ (we can omit $112<12$)
\begin{align}\label{ExB2LeibnizN12}
\begin{split}
\bigl[\ldots[\redv{112},\underbrace{\fvsl{12}]_{q_{1,12}q_{12,12}}\ldots,\fvsl{12}}_{N_{12}-1}\bigr]_{q_{1,12}q_{12,12}^{N_{12}-1}} 
- [\fvsl 1,\redhv{12}]_{q_{1,12}^{N_{12}}} \in I_{\prec\fvsl 1\fvsl{12}^{N_{12}}},\\
\bigl[\underbrace{\fvsl{12},\ldots[\fvsl{12}}_{N_{12}-1},\redv{122}]_{q_{{12},{12}} 
q_{{12},2}}\ldots\bigr]_{q_{{12},{12}}^{N_{12}-1} q_{{12},2}} - [\redhv{{12}},\fvsl 2]_{q_{{12},2}^{N_{12}}} \in I_{\prec\fvsl{{12}}^{N_{12}}\fvsl 2}.
\end{split}
\end{align}
If $N_{112}<\infty$, then $\bigl[\redhv{112},\fvsl{112}\bigr]_{1}\in  I_{\prec\fvsl{112}^{N_{112}+1}}$ and for $1<112$, $112<12$, $112<2$
\begin{align}\label{ExB2LeibnizN112}
\begin{split}
\bigl[\ldots[\redv{1112},\underbrace{\fvsl{112}]_{q_{1,112}q_{112,112}}\ldots,\fvsl{112}}_{N_{112}-1}\bigr]_{q_{1,112}q_{112,112}^{N_{112}-1}} 
- [\fvsl 1,\redhv{112}]_{q_{1,112}^{N_{112}}} &\in I_{\prec\fvsl 1\fvsl{112}^{N_{112}}}\\
\bigl[\underbrace{\fvsl{112},\ldots[\fvsl{112}}_{N_{112}-1},\redv{11212}]_{q_{112,112} 
q_{112,12}}\ldots\bigr]_{q_{112,112}^{N_{112}-1} q_{112,12}}- [\redhv{112},\fvsl{12}]_{q_{112,12}^{N_{112}}} &\in I_{\prec\fvsl{112}^{N_{112}}\fvsl{12}}\\
\bigl[\underbrace{\fvsl{112},\ldots[\fvsl{112}}_{N_{112}-1},\redbrv{112}{2}]_{q_{112,112} 
q_{112,2}}\ldots\bigr]_{q_{112,112}^{N_{112}-1} q_{112,2}} - [\redhv{112},\fvsl 2]_{q_{112,2}^{N_{112}}} &\in I_{\prec\fvsl{112}^{N_{112}}\fvsl 2}
\end{split}
\end{align}

The proof that the liftings of \cite[Thm.~5.13]{Helbig-Lift} have the PBW basis $[L]$ consists in replacing the $\redv{uv}$ and $\redhv{u}$ in the conditions above, like it was done before in Proposition \ref{PropPBWLiftingsA2}. We leave this to the reader.

\subsection{PBW basis for $L=\{x_1 < x_1x_1x_2 < x_1x_2  < x_1x_2x_2 < x_2\}$}  \label{SectPBWbasisT4} 


This PBW basis $[L]$ appears in the Nichols algebras of non-standard type and their liftings of \cite[Thm.~5.17 (1)]{Helbig-Lift}. Generally, we ask for the conditions when $[L]$ is a PBW Basis of $(T(V)\#\k[\G])/I$ where $I$ is generated by
\begin{align*}
[x_1 x_1x_1x_2] &-\red{1112},      &  x_1^{N_1}&-\redh{1},\\
[x_1x_1x_2x_2]&-\red{1122},       &  [x_1x_1x_2]^{N_{112}}&-\redh{112},\\
[x_1x_1x_2x_1x_2]&-\red{11212},    &   [x_1x_2]^{N_{12}}&-\redh{12}, \\
[x_1x_2  x_1x_2x_2]&-\red{12122},  &  [x_1x_2x_2]^{N_{122}}&-\redh{122},\\
[x_1x_2x_2x_2]&-\red{1222},        &   x_2^{N_2}&-\redh{2}.
\end{align*}
 In $\k\la\fvsl 1,\fvsl{112},\fvsl{12},\fvsl{122},\fvsl 2\ra\#\k[\G]$ we have the following $\redbrv{u}{v}$ ordered by $\ell(uv)$, $u,v\in L$: If $\Sh{uv}{u}{v}$ then
\begin{align*}
\redbrv{1}{2}  &= \fvsl{12},   &    \redbrv{1}{112}  &= \redv{1112}  ,  & \redbrv{112}{12} &= \redv{11212}, \\
\redbrv{1}{12} &= \fvsl{112} , &    \redbrv{1}{122}  &= \redv{1122},    &   \redbrv{12}{122} &= \redv{12122},\\
\redbrv{12}{2} &= \fvsl{122} , &    \redbrv{122}{2} &= \redv{1222},
\end{align*}
and for $\nSh{1122}{112}{2}$ and $\nSh{112122}{112}{122}$ by Eq.~\eqref{RedCommutDefnVariableNotSh}
\begin{align*}
 \redbrv{112}{2} &= \partial_{1}^{\rho}(\redbrv{12}{2}) +q_{12,2}\redbrv{1}{2}\fvsl{12} -q_{1,12}\fvsl{12}\redbrv{1}{2}\\
                 &= \redv{1122} +(q_{12,2} -q_{1,12})\fvsl{12}^2,\\
\redbrv{112}{122}&= \partial_{1}^{\rho}(\redv{12122}) +q_{12,122}\redv{1122}\fvsl{12} -q_{1,12}\fvsl{12}\redv{1122}.
\end{align*}
We have to check the $q$-Jacobi conditions for $1<112<2$ (like Eq.~\eqref{ExB2JacobiCond1}), $1<112<12$ (like Eq.~\eqref{ExB2JacobiCond3}), $1<112<122$, $1<122<2$,
  $112<12<2$ (like Eq.~\eqref{ExB2JacobiCond2}), $112<12<122$, $112<122<2$,
  $12<122<2$  (note that we can omit $1<12<2$, $1<12<122$). The restricted $q$-Leibniz conditions are treated like before (note that we can leave out those for $1<112$, $1<12$, $1<122$ if $N_1<\infty$, $112 < 12$, $12 < 122$ if $N_{12}<\infty$, $112<2$, $12<2$, $122<2$ if $N_2<\infty$). 

Both types of conditions detect  many redundant relations like before. The proof that the given ideals of the  Nichols algebras and their liftings of \cite[Thm.~5.17 (1)]{Helbig-Lift} admit the PBW basis $\{x_1,[x_1x_1x_2],[x_1x_2],[x_1x_2x_2], x_2\}$ is again a straightforward but rather expansive calculation.

\subsection{PBW basis for $L=\{x_1  < x_1x_1x_2 < x_1x_1x_2x_1x_2 < x_1x_2   < x_2\}$}  \label{SectPBWbasisT5} 


This PBW basis  $[L]$ shows up in the Nichols algebras of non-standard type and their liftings of \cite[Thm.~5.17 (2),(4)]{Helbig-Lift}. More generally, we examine when $[L]$ is a PBW Basis of $(T(V)\#\k[\G])/I$ where $I$ is generated by
\begin{align*}
[x_1 x_1x_1x_2] &-\red{1112},                   &  x_1^{N_1}&-\redh{1},\\
[x_1x_1x_1x_2x_1x_2]&-\red{111212},             &  [x_1x_1x_2]^{N_{112}}&-\redh{112},\\
[x_1x_1x_2x_1x_1x_2x_1x_2]&-\red{11211212},     &  [x_1x_1x_2x_1x_2]^{N_{11212}}&-\redh{11212},\\
[x_1x_1x_2x_1x_2x_1x_2]&-\red{1121212},         &   [x_1x_2]^{N_{12}}&-\redh{12}, \\
[x_1x_2x_2]&-\red{122},                         &   x_2^{N_2}&-\redh{2}.
\end{align*}
 In $\k\la\fvsl 1,\fvsl{112},\fvsl{11212},\fvsl{12},\fvsl 2\ra\#\k[\G]$ we have the following $\redbrv{u}{v}$ ordered by $\ell(uv)$, $u,v\in L$: If $\Sh{uv}{u}{v}$ then
\begin{align*}
\redbrv{1}{2}  &= \fvsl{12},   &    \redbrv{1}{112}  &= \redv{1112}  ,  & \redbrv{11212}{12} &= \redv{1121212}, \\
\redbrv{1}{12} &= \fvsl{112} , &    \redbrv{112}{12}  &= \fvsl{11212},  & \redbrv{112}{11212} &= \redv{11211212},\\
\redbrv{12}{2} &= \redv{122} , &    \redbrv{1}{11212} &= \redv{111212},
\end{align*}
and for $\nSh{1122}{112}{2}$ and $\nSh{112122}{11212}{2}$ by Eq.~\eqref{RedCommutDefnVariableNotSh}
\begin{align*}
 \redbrv{112}{2} &= \partial_{1}^{\rho}(\redbrv{12}{2}) +q_{12,2}\redbrv{1}{2}\fvsl{12} -q_{1,12}\fvsl{12}\redbrv{1}{2}\\
                 &= \redv{1122} +(q_{12,2} -q_{1,12})\fvsl{12}^2,\\
\redbrv{11212}{2}&= \partial_{112}^{\rho}(\redv{122}) +q_{12,2}\redbrv{112}{2}\fvsl{12} -q_{112,12}\fvsl{12}\redbrv{112}{2}\\
             &= \partial_{112}^{\rho}(\redv{122}) 
+q_{12,2}\redv{1122}\fvsl{12}-q_{112,12}\fvsl{12}\redv{1122}\\
&\quad
+(q_{12,2}-q_{112,12})(q_{12,2} -q_{1,12})\fvsl{12}^3.
\end{align*}
Again we have to consider all $q$-Jacobi conditions and restricted $q$-Leibniz conditions, from where we detect again many redundant relations. Like before, we leave the concrete calculations for the cases of \cite[Thm.~5.17 (2),(4)]{Helbig-Lift} to the reader.

\subsection{PBW basis for $L=\{x_1 < x_1x_1x_1x_2 < x_1x_1x_2  < x_1x_2   < x_2\}$}  \label{SectPBWbasisT7} 


The Nichols algebras of non-standard type and their liftings in \cite[Thm.~5.17 (3),(5)]{Helbig-Lift} have this PBW basis $[L]$. We study the situation, when $[L]$ is a PBW Basis of $(T(V)\#\k[\G])/I$ where $I$ is generated by
\begin{align*}
[x_1 x_1x_1x_1x_2] &-\red{11112},                   &  x_1^{N_1}&-\redh{1},\\
[x_1x_1x_1x_2x_1x_1x_2]&-\red{1112112},             &  [x_1x_1x_1x_2]^{N_{1112}}&-\redh{1112},\\
[x_1x_1x_2x_1x_2]&-\red{11212},                  &  [x_1x_1x_2]^{N_{112}}&-\redh{112},\\
 [x_1x_2x_2]&-\red{122},           &   [x_1x_2]^{N_{12}}&-\redh{12}, \\
     &                   &   x_2^{N_2}&-\redh{2}.
\end{align*}
 In $\k\la\fvsl 1,\fvsl{112},\fvsl{11212},\fvsl{12},\fvsl 2\ra\#\k[\G]$ we have the following $\redbrv{u}{v}$ ordered by $\ell(uv)$, $u,v\in L$: If $\Sh{uv}{u}{v}$ then
\begin{align*}
\redbrv{1}{2}  &= \fvsl{12},   &    \redbrv{1}{112}  &= \fvsl{1112}  ,  & \redbrv{1112}{112} &= \redv{1121212}, \\
\redbrv{1}{12} &= \fvsl{112} , &    \redbrv{112}{12}  &= \redv{11212}, &&\\
\redbrv{12}{2} &= \redv{122} , &    \redbrv{1}{1112} &= \redv{11112},
\end{align*}
and for $\nSh{1122}{112}{2}$, $\nSh{11122}{1112}{2}$ and $\nSh{111212}{1112}{12}$ by Eq.~\eqref{RedCommutDefnVariableNotSh}
\begin{align*}
 \redbrv{112}{2} &= \partial_{1}^{\rho}(\redbrv{12}{2}) +q_{12,2}\redbrv{1}{2}\fvsl{12} -q_{1,12}\fvsl{12}\redbrv{1}{2}\\
                 &= \partial_{1}^{\rho}(\redv{122}) +(q_{12,2} -q_{1,12})\fvsl{12}^2,\\
\redbrv{1112}{2} &= \partial_{1}^{\rho}(\redbrv{112}{2}) +q_{112,2}\redbrv{1}{2}\fvsl{112} -q_{1,112}\fvsl{112}\redbrv{1}{2},\\
&=\partial_{1}^{\rho}(\partial_{1}^{\rho}(\redv{122}))  + (q_{12,2} -q_{1,12})(\fvsl{112}\fvsl{12}+q_{1,12}\fvsl{12}[\fvsl{1},\fvsl{12}])\\
&\quad +q_{112,2}\fvsl{12}\fvsl{112} -q_{1,112}\fvsl{112}\fvsl{12},\\
&=\partial_{1}^{\rho}(\partial_{1}^{\rho}(\redv{122}))+ q_{12}(q_{22} -q_{11}-q_{11}^2)\fvsl{112}\fvsl{12} \\
&\quad 
+q_{12}^2(q_{11}(q_{22} -q_{11})+q_{22})\fvsl{12}\fvsl{112},\\
\redbrv{1112}{12} &= \partial_{1}^{\rho}(\redv{11212}) +(q_{112,2} -q_{1,112})\fvsl{112}^2.
\end{align*}
Note that for the fifth equation we used the relation $[\fvsl{1},\fvsl{12}]-\fvsl{112}$. 
The assertion concerning the PBW basis and the redundant relations of \cite[Thm.~5.17 (3),(5)]{Helbig-Lift} are again straightforward to verify.

\bibliographystyle{plain}
\bibliography{mybib}
\end{document}